\documentclass[11pt,a4paper]{article}

\usepackage{amsmath, amsfonts, amssymb}
\usepackage{theorem}
\usepackage[dvips]{epsfig}
\usepackage{latexsym}
\usepackage{exscale}
\usepackage[latin1]{inputenc}
\setcounter{tocdepth}{3} \topmargin-0.6cm \oddsidemargin0cm
\evensidemargin-0.0cm \textheight23cm \textwidth16cm
\parsep0ex
\itemsep0ex
\topsep0.5ex
\partopsep0ex


\flushbottom


\newtheorem{thm}{Theorem}[section]

\newtheorem{prop}[thm]{Proposition}

\newtheorem{defn}{Definition}[section]
\newtheorem{cor}[thm]{Corollary}

\def\be#1\ee{\begin{equation}#1\end{equation}}
\newcommand{\bea}{\begin{eqnarray}}
\newcommand{\eea}{\end{eqnarray}}
\newcommand{\beaa}{\begin{eqnarray*}}
\newcommand{\eeaa}{\end{eqnarray*}}
\newcommand{\bei}{\begin{itemize}}
\newcommand{\eei}{\end{itemize}}
\newcommand{\bee}{\begin{enumerate}}
\newcommand{\eee}{\end{enumerate}}

\def\vp{\varphi}

\def\abs#1{\left\vert #1 \right\vert}      



\def\R{\mathbb{R}}



\def\N{{\mathbb N}}

\def\ex{\mathrm{e}}
\def\d{\, \mathrm{d}}

\newcommand{\eps}{\varepsilon}
\newenvironment{proof}[1][] {\noindent {\bf Proof#1:} }{\hspace*{\fill}$\square$\medskip\par}
\newcommand{\ind}{1\hspace{-0.098cm}\mathrm{l}}

\def\al{{\alpha}}
\def\Ra{R^{\al(\cdot)}}
\begin{document}

\title{\bf Fractional integration operators of variable order:
Continuity and compactness properties}
\author{
Mikhail\ Lifshits \and  Werner\ Linde
}

\date{\today }

\maketitle

\begin{abstract}
Let $\al:[0,1]\to \R$ be a Lebesgue--almost everywhere positive function.
We consider the Riemann--Liouville operator of variable order defined by
\[
  (\Ra f)(t) :=\frac{1}{\Gamma(\al(t))}\,\int_0^t (t-s)^{\al(t)-1}\,f(s)\,\d s
  \,,\quad t\in [0,1]\,,
\]
as operator from $L_p[0,1]$ to $L_q[0,1]$. Our first aim is to study its continuity
properties. For example, we show that $\Ra$ is always bounded (continuous) in
$L_p[0,1]$ provided that $1<p\le\infty$. Surprisingly, this becomes false for $p=1$.
In order  $\Ra$ to be bounded in $L_1[0,1]$, the function $\al(\cdot)$ has to satisfy
some additional assumptions.

In the second, central part of this paper we investigate compactness properties of $\Ra$.
We characterize functions $\al(\cdot)$ for which $\Ra$  is a compact operator and
for certain classes of functions $\al(\cdot)$ we provide
order--optimal bounds for the dyadic entropy numbers $e_n(\Ra)$ .
\end{abstract}
\bigskip
\bigskip
\bigskip
\bigskip
\bigskip

\vfill
\noindent
\textbf{2010 AMS Mathematics Subject Classification:}
Primary: 26A33 Secondary:  47B06, 47B07

\noindent\textbf{Key words and phrases:}\  Riemann--Liouville operator,
integration of variable order, compactness properties, entropy numbers.

\newpage

\section{Introduction}
Different kinds of integration of variable order were introduced in
\cite{SR} "stimulated by intellectual curiosity" with the "hopeful
expectation that applications would follow". Actually, it happened
that just few years later the wide field of applications emerged
independently in probability theory under the name of multifractional
random processes, see \cite{ACL,BJR,FL}, to mention just a few.
These processes are in a natural way related to the integration
operators of variable order.

Subsequent development mainly led to considering these integral
operators in the spaces of varable index, such as Lebesgue spaces
$L_{p(\cdot)}$ and H\"older spaces  $H^{\al(\cdot)}$, see e.g.
\cite{SSV}, \cite{SV}, \cite{S94}, as well as to a theory of
differential equations of variable order.

Since our motivation comes from probability theory, we are not
interested in such elaborated concepts as $L_{p(\cdot)}$ or
$H^{\al(\cdot)}$. Instead, we consider the integration operators
in conventional $L_p$--spaces and study their approximation properties
-- those closely related with the important features of associated
random processes. We are aware about only one work \cite{S08} relating
probability with fractional integration operators of variable order.
However, the operators in \cite{S08} are different from ours and the
emphasis there is put on large time scale properties such as long range
dependence.

To be more precise, let $\al : [0,1]\to\R$ be a measurable function
with $\al(t)>0$ a.e. For a given function $f$ we  define $\Ra f$ by
\be
   \label{Raft}
  (\Ra f)(t)
  :=\frac{1}{\Gamma(\al(t))}\,\int_0^t (t-s)^{\al(t)-1}\,f(s)\,\d s
  \,,\quad t\in [0,1]\,.
\ee
The basic aim of the present paper is to describe properties of $\Ra $,
e.g.~as operator in $L_p[0,1]$, in dependence of those of $\al(\cdot)$.
One of the questions we investigate is in which cases $\Ra$ defines a
bounded (linear) operator from $L_p[0,1]$ into $L_q[0,1]$ for given
$1\le p,q\le\infty$. Recall that in the classical case, i.e., if
$\al(t)=\al$ for some $\al>0$, then this is so provided that
$\al>(1/p -1/q)_+$. Moreover, even in the critical case $\al=1/p -1/q>0$
the classical Riemann--Liouville operator is bounded whenever $1<p<1/\al$.
If $\al(\cdot)$ is non--constant, we shall prove the following:

\begin{thm} \label{tb}
Suppose $1<p\le \infty$. Then for each measurable a.e.~positive
$\al(\cdot)$ the mapping $\Ra$ defined by $(\ref{Raft})$ is a bounded
operator from $L_p[0,1]$ to $L_p[0,1]$. Moreover, the operator norm
of the $\Ra$ is uniformly bounded, i.e., we have
\be \label{Ran}
    \|\Ra : L_p[0,1]\to L_p[0,1]\|\le c_p
\ee
with a constant $c_p>0$ independent of $\al(\cdot)$.
\end{thm}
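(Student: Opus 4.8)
The plan is to dominate $\Ra f$ pointwise by a classical maximal operator, with a constant that does not see $\al(\cdot)$, and then to invoke the boundedness of that maximal operator on $L_p$ for $1<p\le\infty$. Throughout I extend $f$ by zero outside $[0,1]$ and let $M$ be the one--sided Hardy--Littlewood maximal operator, $(Mf)(t):=\sup_{h>0}\frac1h\int_{t-h}^t|f(s)|\,\d s$. Since $\Gamma$ is bounded away from $0$ on $(0,\infty)$, the factor $1/\Gamma(\al(t))$ is harmless on its own; the entire difficulty sits in the singular kernel $(t-s)^{\al(t)-1}$ when $\al(t)$ is small, and the point is that its singularity is exactly offset by the normalisation.

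First I would establish the pointwise estimate
\be
  (\Ra|f|)(t)\le C\,(Mf)(t)\,,\qquad t\in[0,1]\,,
\ee
with an absolute constant $C$ independent of $\al(\cdot)$. Fixing $t$, substitute $u=t-s$ and split $(0,t)$ into the dyadic rings $u\in(2^{-j-1}t,2^{-j}t]$, $j\ge0$. On each ring monotonicity of $u\mapsto u^{\al(t)-1}$ puts its maximum at an endpoint, so $u^{\al(t)-1}\le 2^{\,|1-\al(t)|}(2^{-j}t)^{\al(t)-1}$, while $\int_{2^{-j-1}t}^{2^{-j}t}|f(t-u)|\,\d u\le\int_{t-2^{-j}t}^{t}|f|\le 2^{-j}t\,(Mf)(t)$. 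Multiplying, summing the geometric series $\sum_{j\ge0}2^{-j\al(t)}=(1-2^{-\al(t)})^{-1}$, inserting $1/\Gamma(\al(t))$ and using $t^{\al(t)}\le1$ gives
\be
  (\Ra|f|)(t)\le \frac{2^{\,|1-\al(t)|}}{\Gamma(\al(t))\,(1-2^{-\al(t)})}\,(Mf)(t)\,.
\ee

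The step I expect to be the main obstacle is to verify that the prefactor $\al\mapsto 2^{\,|1-\al|}\big/\big(\Gamma(\al)(1-2^{-\al})\big)$ is bounded on all of $(0,\infty)$. On compact subsets this is clear by continuity; the delicate regime is $\al\to0^+$, where the singular kernel forces $1-2^{-\al}\sim\al\ln2\to0$, but this is compensated precisely by $1/\Gamma(\al)=\al/\Gamma(\al+1)\sim\al$, so the ratio tends to $2/\ln2$. As $\al\to\infty$ the factorial growth of $\Gamma$ beats $2^{\al}$, so the prefactor even vanishes. Hence $C:=\sup_{\al>0}2^{\,|1-\al|}/\big(\Gamma(\al)(1-2^{-\al})\big)<\infty$, and the pointwise bound holds uniformly in $\al(\cdot)$.

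Finally, for $1<p<\infty$ the operator $M$ is bounded on $L_p(\R)$ with a constant $c_p'$ depending only on $p$ (Hardy--Littlewood maximal theorem), and for $p=\infty$ one has $\norm{Mf}_\infty\le\norm{f}_\infty$ trivially; since the zero--extension is isometric between $L_p[0,1]$ and its image in $L_p(\R)$, combining this with the pointwise estimate yields $\norm{\Ra f}_p\le C\,c_p'\,\norm{f}_p$, which is exactly $(\ref{Ran})$ with $c_p=C\,c_p'$ independent of $\al(\cdot)$. I note that this route is genuinely restricted to $p>1$: it breaks down at $p=1$ precisely because $M$ fails to be bounded on $L_1$, which is consistent with the dichotomy announced in the introduction.
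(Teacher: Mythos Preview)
Your proof is correct and follows the same overall architecture as the paper's: dominate $(\Ra|f|)(t)$ pointwise by a constant times the Hardy--Littlewood maximal function, with a constant independent of $\al(\cdot)$, and then invoke the $L_p$--boundedness of the maximal operator for $p>1$. Your handling of the prefactor $\al\mapsto 2^{|1-\al|}/\bigl(\Gamma(\al)(1-2^{-\al})\bigr)$ is accurate; the cancellation $\Gamma(\al)^{-1}\sim\al$ against $(1-2^{-\al})\sim\al\ln 2$ as $\al\to 0^+$ is exactly the mechanism that makes the bound uniform.

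The difference lies only in how the pointwise inequality is obtained. You slice $(0,t)$ into dyadic annuli $(2^{-j-1}t,2^{-j}t]$, freeze the kernel on each, and sum a geometric series in $2^{-j\al(t)}$. The paper instead writes $\int_0^t s^{\al-1}\,\d\mu_t(s)$ with $\d\mu_t=f(t-\cdot)\,\d s$, integrates by parts against the distribution function $F_t(s)=\int_0^s f(t-v)\,\d v$, and uses the single estimate $F_t(s)\le 2s\,(Mf)(t)$; this yields $(\Ra f)(t)\le (2/K_0)\,t^{\al(t)}(Mf)(t)$ directly, without any series. Your dyadic route is arguably more transparent and avoids the case distinction $\al\gtrless 1$ that the paper makes, at the cost of the extra bookkeeping on the constant. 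Both methods give a constant depending only on $p$, and both break down at $p=1$ for the same reason, as you note. A minor point: you use the one--sided maximal operator while the paper uses the centred one, but either is bounded on $L_p$ for $p>1$, so this is immaterial.
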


In contrast to the classical case of constant $\al>0$ it turns out that
Theorem \ref{tb} is no longer valid for $p=1$. We shall give necessary
and sufficient conditions in order that $\Ra$ is bounded in $L_1[0,1]$
as well. Furthermore, we also investigate the question for which
$\al(\cdot)$ equation \eqref{Raft} defines a bounded operator from $L_p[0,1]$
into $L_q[0,1]$.

If $\al(t)=\al$ with $\al> (1/p-1/q)_+$, then $R^\al$ is not only bounded from
$L_p[0,1]$ to $L_q[0,1]$, it even defines a compact operator. Thus another
natural question is whether this is also valid provided that
$\al(t)>(1/p -1/q)_+$ a.e. We investigate this problem  in Section \ref{CpR}.
It turns out that $\Ra$ acts as a compact operator from $L_p[0,1]$ into
$L_q[0,1]$ provided that $\al(\cdot)$ is well separated from the border
value $(1/p-1/q)_+$. What happens if $\al(\cdot)$ approaches the border
value? We investigate this question more thoroughly for $p=q$. The answer is
that $\Ra$ is only compact if $\al(\cdot)$ approaches the critical value
zero extremely slowly.

Suppose now that $\al(\cdot)$ is well--separated from the border value
$(1/p-1/q)_+$. Hence, $\Ra$ is compact and a natural question is how the
degree of compactness of $\Ra$ depends on certain properties of the
underlying function $\al(\cdot)$. We shall measure this degree by the
behavior of the entropy numbers $e_n(\Ra)$. The answer to this question
is not surprising: The degree of compactness of $\Ra$ is "almost" completely
determined by the minimal value of $\al(\cdot)$, i.e.~by the value
$\al_0:=\inf_{0\le t\le 1} \al(t)$. Extra logarithmic terms improve the
behavior of $e_n(\Ra)$ in dependence of the behavior of $\al(\cdot)$ near
to its minimum. For example, if $\al(t)=\al_0 + \lambda t^\gamma$ for some
$\lambda,\gamma>0$ and $\al_0>(1/p-1/q)_+$, then it follows that
$$
  e_n(\Ra : L_p[0,1]\to L_q[0,1])
  \approx \frac{n^{-\al_0}}{(\ln n)^{(\al_0+1/q-1/p)/\gamma}}\,.
$$
Thus, in view of the extra logarithmic term, the entropy behavior of $\Ra$ is slightly
better than $n^{-\al_0}$, the behavior  of $e_n(R^{\al_0})$.
\medskip

To prove entropy estimates for $\Ra$ we have
to know more about the entropy behavior of classical Riemann--Liouville
operators. More precisely, suppose $R^\al$ is the classical operator for some
$\al>(1/p-1/q)_+$. Then it is known that
$$
    C_\al := \sup_{n\ge 1} n^\al\,e_n(R^\al) <\infty
$$
where $R^\al : L_p[0,1]\to L_q[0,1]$. Yet we did not find any information
in the literature how these constants $C_\al$ depend on $\al$. We investigate
this question in Section \ref{Eb}. In particular, we prove that the
$C_\al$ are uniformly bounded for $\al$ in compact sets. The presented
results may be  of interest in their own right because they also sharpen
some known facts about compactness properties of certain Sobolev embeddings.

The organization of the paper is as follows:  In Section \ref{Bp} we first
show that the integral \eqref{Raft} is well--defined for all $f\in L_1[0,1]$
and we state some weak form of a semi--group property for $\Ra$.
Section \ref{Bound} is devoted to the question of boundedness of $\Ra$.
More precisely, we prove the above stated Theorem \ref{tb} and also
characterize functions $\al(\cdot)$ for which $\Ra$ defines a
bounded operator from $L_p[0,1]$ into $L_q[0,1]$. Let $0<r<\infty$
be a given real number and let $\al(\cdot)$ be a function on $[0,r]$
possessing a.e.~positive values. Then $\Ra f$ is well--defined for
$f\in L_p[0,r]$. We investigate in Section \ref{Sp} how $\Ra$ on
$L_p[0,r]$ may be transformed into an operator defined on $L_p[0,1]$.
In Section \ref{CpR} we characterize functions $\al(\cdot)$ for which
$\Ra$ is a {\it compact} operator in $L_p[0,1]$. Here we distinguish
between the two following cases: Firstly, the function $\al(\cdot)$
approaches the border value at zero and, secondly, the critical value
of $\al(\cdot)$ appears at the right hand end point of $[0,1]$. As
already mentioned, in order to investigate compactness properties of
$\Ra$ we have to know more about those of classical Riemann--Liouville
operators. We present the corresponding evaluations in Section \ref{Eec}.
Starting with some general upper and lower entropy estimates for $\Ra$, which are
presented in Section \ref{Eb},
we obtain in Section \ref{Exa} sharp estimates for the
entropy numbers $e_n(\Ra)$ for concrete functions $\al(\cdot)$.
\bigskip

\noindent
\textbf{Acknowledgement:} The authors are very grateful to Thomas K\"uhn
(Leipzig University) for very helpful discussions about Proposition \ref{pr1}.
He sketched an independent proof of \eqref{anI} and some of his
ideas we incorporated in the proof presented here. Furthermore we thank
Hermann K\"onig (Kiel University) who indicated to us another direct approach
(without using Besov spaces) for estimating $a_n(R^\al)$ uniformly.\\
The research was supported by the RFBR--DFG grant 09-01-91331 "Geometry and
asymptotics of random structures". Furthermore, the first named author was
supported by RFBR grants 10-01-00154a and 11-01-12104-ofi-m.

\section{Basic properties of $\Ra$}
\label{Bp}
\setcounter{equation}{0}

Throughout this paper we always assume that $\al : [0,1]\to [0,\infty)$ is a measurable
function satisfying $\al(t)>0$ for (Lebesgue) almost all $t\in [0,1]$.
Our first aim is to show that the generalized fractional integral \eqref{Raft} exists a.e.
Before let us introduce the following notation used throughout this paper: We set
\be
\label{defK}
K_0:= \inf_{0<t<\infty} \Gamma(t)\approx 0.8856031944\ldots\,.
\ee

\begin{prop}
\label{exist}
For $f\in L_1[0,1]$ the function
$$
(\Ra f)(t) :=\frac{1}{\Gamma(\al(t))}\,\int_0^t (t-s)^{\al(t)-1}\,f(s)\,\d s
$$
is well-defined for almost all $t\in[0,1]$.
\end{prop}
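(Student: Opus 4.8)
The plan is to prove the slightly stronger statement that the integral converges \emph{absolutely} for almost every $t$, i.e.\ that $\int_0^t (t-s)^{\al(t)-1}\abs{f(s)}\d s<\infty$ for a.e.\ $t\in[0,1]$. Once this is known, the fact that $\al(t)>0$ a.e.\ gives $0<1/\Gamma(\al(t))\le 1/K_0<\infty$, so $(\Ra f)(t)$ is a well-defined finite number for a.e.\ $t$. I stress at the outset that one should \emph{not} attempt to bound the full double integral $\int_0^1\int_0^t(t-s)^{\al(t)-1}\abs{f(s)}\d s\,\d t$: finiteness of that quantity would amount to $L_1$-boundedness of $\Ra$, which is announced to fail for general $\al(\cdot)$. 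The argument must therefore be localised in $t$ rather than run as a single global Tonelli estimate.

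The key device is to stratify the domain according to the size of $\al(\cdot)$. For $k\in\N$ set $A_k:=\set{t\in[0,1]:\al(t)\ge 1/k}$. Since $\al(t)>0$ for almost all $t$, the increasing sets $A_k$ exhaust a subset of full measure of $[0,1]$, so it suffices to establish absolute convergence for a.e.\ $t\in A_k$, for each fixed $k$.

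Fix $k$ and let $t\in A_k$. Because $0<t-s\le 1$ and $\al(t)-1/k\ge 0$, we have $(t-s)^{\al(t)-1/k}\le 1$, whence
\be
  \int_0^t (t-s)^{\al(t)-1}\abs{f(s)}\d s
  \le \int_0^t (t-s)^{1/k-1}\abs{f(s)}\d s
  =\Gamma(1/k)\,\big(R^{1/k}\abs{f}\big)(t)\,,
\ee
where $R^{1/k}$ denotes the classical Riemann--Liouville operator of constant order $1/k$. Thus, on each stratum $A_k$, the variable-order kernel is dominated by a constant-order kernel, and the remaining task is to control the latter.

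The final step invokes the elementary fact that $R^\beta$ of constant order $\beta>0$ maps $L_1[0,1]$ into itself: its convolution kernel $u\mapsto u^{\beta-1}/\Gamma(\beta)$ belongs to $L_1(0,1)$ with norm $1/\Gamma(\beta+1)$, so by Young's inequality $\norm{R^\beta\abs{f}}_{L_1}\le \norm{f}_{L_1}/\Gamma(\beta+1)$. In particular $R^{1/k}\abs{f}\in L_1[0,1]$ is finite almost everywhere, so the right-hand side above is finite for a.e.\ $t\in A_k$; taking the union over $k$ finishes the proof. The only genuine difficulty is the diagonal singularity of $(t-s)^{\al(t)-1}$ as $\al(t)\to 0$, where the exponent approaches the non-integrable value $-1$. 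The stratification confines the exponent to $\ge 1/k-1>-1$ on each piece, which is exactly what permits the reduction to a bounded convolution operator, while passing to $\abs{f}$ and working with ``almost every $t$'' rather than a global integral bound is precisely what sidesteps the failure of $L_1$-boundedness of $\Ra$.
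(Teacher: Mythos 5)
Your proposal is correct and is essentially the paper's own argument: the paper uses the same level sets $A_\beta=\{t:\al(t)\ge\beta\}$, the same pointwise domination of the variable-order kernel by the constant-order kernel $(t-s)^{\beta-1}$ together with $\Gamma(\al(t))\ge K_0$, and then a Tonelli computation over $A_\beta$ that is exactly your Young's-inequality statement $\|R^{\beta}|f|\|_{L_1}\le\|f\|_1/(\beta K_0)$, before taking a union over $\beta\downarrow 0$. The only cosmetic difference is that you phrase the last step as "an $L_1$ function is finite a.e." while the paper integrates over $A_\beta$ explicitly; the content is identical.
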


\begin{proof}
For $\beta>0$ define the level sets $A_\beta$ of $\al(\cdot)$ by
$$
A_\beta:=\{t\in [0,1] : \al(t)\ge \beta\}\,.
$$
Then, if $f\in L_1[0,1]$, $f\ge 0$, it follows that
\beaa
&&\int_{A_\beta}\left[\frac{1}{\Gamma(\al(t))}\int_0^t (t-s)^{\al(t)-1}\, f(s) \d s\right] \d t
\le
\int_{A_\beta}\left[\frac{1}{K_0}\int_0^t (t-s)^{\beta-1} f(s) \d s\right] \d t \\
&\le&
\frac{1}{K_0} \int_0^1\left[\int_s^1 (t-s)^{\beta-1}  \d t\right] f(s) \d s
\le\frac{\|f\|_1}{\beta K_0}
\eeaa
with $K_0>0$ defined by \eqref{defK}.
Hence, whenever $f\in L_1[0,1]$, then $(\Ra f)(t)$ exists for almost all $t\in A_\beta$.
Consequently, taking a sequence $(\beta_n)_{n\ge 1}$ tending monotonously to zero, $(\Ra f)(t)$ is
well-defined for almost all $t\in \bigcup_{n=1}^\infty A_{\beta_n}$. Moreover, by $\al(t)>0$ a.e.~the set
$\bigcup_{n=1}^\infty A_{\beta_n}$ possesses Lebesgue measure $1$, and this completes the proof.
\end{proof}

\begin{defn}
\rm
Given $f\in L_1[0,1]$, the function $\Ra f$ is called the Riemann--Liouville fractional integral of
$f$ with varying exponent $\al(\cdot)$. In the case of real $\al>0$ we denote by $R^\al f$
the
classical $\al$--fractional integral of $f$ (in the sense of Riemann--Liouville) which
corresponds to $\Ra f$ with $\al(t)=\al,\; 0\le t\le 1$.
\end{defn}

One of the most useful properties of the scale of classical Riemann--Liouville integrals
is that it possesses a semi-group property in the following sense: Whenever $\al,\beta>0$,
then we have
$$
    R^{\al+\beta} = R^\al\circ R^\beta\,.
$$
In the case of non-constant $\al(\cdot)$ and $\beta(\cdot)$ such a nice rule is no longer
valid. Only the following weaker result holds:
\begin{prop}
\label{sg}
{\rm (\cite{SR}, Theorem 2.4)}
For any $\al(\cdot)$ and any $\beta>0$ we have
$$
    R^{\al(\cdot)+\beta}=\Ra \circ R^\beta\,.
$$
\end{prop}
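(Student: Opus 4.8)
The plan is to verify the identity pointwise, for almost every $t$, by reducing the composition to a single Beta--function integral. Since both $R^\beta$ and $\Ra$ are linear and, by Proposition \ref{exist} together with the $L_1$--boundedness of the classical operator $R^\beta$, well--defined a.e.\ on $L_1[0,1]$, I may first assume $f\ge 0$; the general case will follow by splitting $f$ into its positive and negative parts once absolute convergence of the iterated integral has been secured. Note also that $\al(t)+\beta\ge\beta>0$ everywhere, so the left--hand operator $R^{\al(\cdot)+\beta}$ is itself unambiguously defined a.e.

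First I would write out the right--hand side explicitly. Putting $g:=R^\beta f$, one has
\[
  (\Ra g)(t)=\frac{1}{\Gamma(\al(t))\,\Gamma(\beta)}
  \int_0^t (t-s)^{\al(t)-1}\left[\int_0^s (s-u)^{\beta-1}\,f(u)\d u\right]\d s\,.
\]
For $f\ge 0$ the integrand is nonnegative on the triangle $\{0\le u\le s\le t\}$, so Tonelli's theorem permits interchanging the $s$-- and $u$--integrations, giving
\[
  (\Ra g)(t)=\frac{1}{\Gamma(\al(t))\,\Gamma(\beta)}
  \int_0^t f(u)\left[\int_u^t (t-s)^{\al(t)-1}(s-u)^{\beta-1}\d s\right]\d u\,.
\]

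The key step is the evaluation of the inner integral. With the substitution $s=u+(t-u)v$ it turns into $(t-u)^{\al(t)+\beta-1}\int_0^1 (1-v)^{\al(t)-1}v^{\beta-1}\d v=(t-u)^{\al(t)+\beta-1}\,B(\al(t),\beta)$, where $B$ denotes the Euler Beta function. Inserting $B(\al(t),\beta)=\Gamma(\al(t))\Gamma(\beta)/\Gamma(\al(t)+\beta)$, the two Gamma factors in the prefactor cancel, and what survives is
\[
  \frac{1}{\Gamma(\al(t)+\beta)}\int_0^t (t-u)^{\al(t)+\beta-1}\,f(u)\d u
  =(R^{\al(\cdot)+\beta}f)(t)\,,
\]
which is exactly the left--hand side.

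I expect the only genuine subtlety to be the justification of Fubini/Tonelli and the a.e.\ finiteness of all quantities involved. For $f\ge 0$ the interchange is immediate from Tonelli, and the finiteness of the resulting expression for a.e.\ $t$ is guaranteed by Proposition \ref{exist} applied to $g=R^\beta f\in L_1[0,1]$. For general $f\in L_1[0,1]$ the same computation applied to $\abs{f}$ shows that the double integral converges absolutely, whence Fubini's theorem legitimizes the interchange and linearity of both operators finishes the argument; everything else reduces to the routine Beta--integral identity above.
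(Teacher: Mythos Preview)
Your argument is correct and is precisely the classical Beta--function computation that the paper has in mind; the paper itself omits the details, remarking only that ``the proof is exactly as in the case of real $\al>0$''. Your handling of the measure--theoretic justification (Tonelli for $f\ge 0$, then Fubini via absolute convergence for general $f$, with Proposition~\ref{exist} and the $L_1$--boundedness of $R^\beta$ supplying a.e.\ finiteness) is appropriate and in fact more explicit than anything the paper provides.
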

\begin{proof}
The proof is exactly as in the case of real $\al>0$. Therefore we omit it.
\end{proof}

\begin{remark}
As already mentioned in \cite{SR}, neither $R^\beta\circ \Ra=R^{\al(\cdot)+\beta}$ nor
$\Ra\circ R^{\beta(\cdot)}= R^{\al(\cdot)+\beta(\cdot)}$ are valid in general.
\end{remark}

\section{Boundedness properties of $\Ra$}
\setcounter{equation}{0}
\label{Bound}

\subsection{$\Ra$ as an operator in $L_p[0,1]$ , $1<p\le\infty$}
In the case of real $\al>0$   by (\ref{Raft}) a bounded linear operator $R^\al$
from $L_p[0,1]$ into $L_p[0,1]$ is defined. Moreover, as easily can be seen (cf.~also \cite{AG})
it holds
$$
\|R^\al : L_p[0,1]\to L_p[0,1]\|\le \frac{1}{\Gamma(\al+1)}\le \frac{1}{K_0}
$$
for all $\al>0$ and $1\le p\le \infty$.
Thus it is natural to ask whether or not $\Ra$ defines also a bounded operator in $L_p[0,1]$
for non-constant functions
$\al(\cdot)$. The answer to this question depends on the number $p$.  The positive result
was stated in Theorem \ref{tb}. Our next aim is to prove it.

\noindent
\textbf{Proof of Theorem \ref{tb}:}
The case $p=\infty$ easily follows by
$$
  |(\Ra f)(t)|\le \frac{1}{\Gamma(\al(t))}\int_0^t (t-s)^{\al(t)-1}\d s\cdot \|f\|_\infty
  \le
  \frac{\|f\|_\infty}{\Gamma(\al(t)+1)}\le \frac{\|f\|_\infty}{K_0}\,.
$$

Suppose now $1<p<\infty$.
For each $f\in L_p[0,1]$ its maximal function $M f$ (cf.~\cite{SW}, II (3)) is defined by
$$
     (M f)(t):= \sup_{r>0}\frac{1}{2 r}\int_{-r}^r |f(t-v)| \d v\,,\quad t\in\R\,.
$$
Hereby we extend $f$ to $\R$ by $f(t)=0$ whenever $t\notin [0,1]$. The basic property of $M f$
is that it fulfills the so-called Hardy--Littlewood maximal inequality (cf.~\cite{SW},
Theorem 3.7., Chapter II), asserting that for each $p>1$ there is an $A_p>0$ such that
\be \label{HL}
     \|M f\|_p\le A_p\,\|f\|_p\,,\quad f\in L_p[0,1]\,.
\ee
To proceed, choose $f\in L_p[0,1]$ with $f\ge 0$ and a number $t\in[0,1]$ for which simultaneously
$\al(t)>0$ as well as $(M f)(t)<\infty$ hold. Note that the set of those numbers $t$ possesses
Lebesgue measure $1$. To simplify the notation let us write $\al$ instead of $\al(t)$ for a moment.
Then we get
\be \label{m1}
   (\Ra f)(t) =\frac{1}{\Gamma(\al)} \int_0^t s^{\al -1}\ f(t-s)\, \d s
   =\frac{1}{\Gamma(\al)}\int_0^t s^{\al -1}\d\mu_t(s)
\ee
where $\mu_t$ is the Borel measure on $[0,t]$ with density $v\mapsto f(t-v)$. Let
$$
F_t(s):= \mu_t([0,s]) =\int_0^s f(t-v)\d v\,,\quad 0\le s\le t\,,
$$
then it follows that
\bea \label{m2}
   \nonumber
   \int_0^t s^{\al -1}\d\mu_t(s) &=& \int_0^t (\al-1) s^{\al-2}\mu_t([s,t])\,\d s
   \\  \nonumber
   &=& \int_0^t (\al-1) s^{\al-2}\left[F_t(t) -F_t(s)\right]\,\d s
   \\
   &=& t^{\al -1} F_t(t) - (\al-1) \int_0^t s^{\al -2} F_t(s)\d s\,.
\eea
By the definition of $M f$ it holds
\be
\label{m3}
    F_t(s)\le 2\,(M f)(t)\cdot s\,,\quad 0\le s\le t\,,
\ee
which, in particular, implies that the right hand integral in \eqref{m2}
is finite by the choice of $t$.

Let us first investigate the case $\al\ge 1$. Then (\ref{m1}), (\ref{m2}),
and (\ref{m3}) imply
\be \label{m4}
    (\Ra f)(t)\le \frac{t^{\al -1}}{\Gamma(\al)} 2\, t\, (M f)(t)
    = \frac{2t^{\al}\,(M f)(t)}{\Gamma(\al)}\,.
   \ee

Now, if $0<\al<1$, then (\ref{m1}), (\ref{m2}) and (\ref{m3}) lead to
\bea \label{m5}
\nonumber
    (\Ra f)(t)&\le& \frac{2\,t^{\al}}{\Gamma(\al)} \, (M f)(t)
    + \frac{2\,(1-\al)}{\Gamma(\al)}\cdot \int_0^t s^{\al -1} \d s \cdot (M f) (t)
    \\
    &=& \frac{2 \,t^\al}{\Gamma(\al +1)} (M f)(t).
\eea
Combining (\ref{m4}) and (\ref{m5}) we see that there is a universal $c>0$ (we may choose $c=2/K_0$)
such that for almost all $t\in [0,1]$
$$
  (\Ra f)(t)\le c\, t^{\al(t)} (M f)(t)\le c\,(M f)(t)\,.
$$
Consequently, since $p>1$, we may apply  (\ref{HL}) and obtain
$$
\|\Ra f\|_p \le c\, A_p\,\|f\|_p
$$
for all non-negative functions $f\in L_p[0,1]$.

Finally, if $f\in L_p[0,1]$ is arbitrary, we argue as follows:
$$
\|\Ra f\|_p \le \|\Ra(|f|)\|_p\le c\,A_p\,\| |f|\|_p = c\,A_p\,\| f\|_p
$$
and this completes the proof. Note that the last estimate yields
$$
\|\Ra : L_p[0,1]\to L_p[0,1]\| \le c\, A_p\,,
$$
hence also \eqref{Ran}
with $c_p= c\,A_p$.
{\hspace*{\fill}$\square$\medskip\par}
\medskip

\begin{remark} The idea to use maximal function as an estimate for fractional
integrals appeared earlier in \cite{AS}, for a different purpose.
\end{remark}

\subsection{$\Ra$ as an operator in $L_1[0,1]$}

Inequality (\ref{HL}) fails for $p=1$. Therefore the previous proof does not extend
to that case and it remains unanswered whether Theorem \ref{tb} is valid for $p=1$.
We will prove that the answer is negative, i.e., there are measurable $\al(\cdot)$,
a.e.~positive, such that $\Ra$ is \textit{not} bounded in $L_1[0,1]$.

Before doing so, let us mention that (\ref{HL}) has the following weak type extension
to $p=1$ (cf. \cite{SW}, Theorem 3.4, Chapter II): There is a constant $c>0$ such that
for all $f\in L_1[0,1]$ we have
$$
    |\{t\in [0,1] : (M f)(t)\ge u\}|\le c\,\frac{\|f\|_1}{u}\,,\quad u>0\,,
$$
where, as usual, $|A|$ denotes the Lebesgue measure of a set $A\subseteq \R$. By the
methods developed in the proof of Theorem \ref{tb} this yields the following:

\begin{prop} \label{wL1}
There is a universal $c>0$ such that for all measurable, a.e.~positive
$\al(\cdot)$ we have
$$
    |\{t\in [0,1] : |(\Ra f)(t)|\ge u\}|\le c\,\frac{\|f\|_1}{u}\,,\quad u>0\,.
$$
In different words, $\Ra$ is a bounded operator from $L_1[0,1]$ into the Lorentz
space $L_{1,\infty}[0,1]$.
\end{prop}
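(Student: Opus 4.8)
The plan is to recycle the pointwise domination established inside the proof of Theorem \ref{tb}. There it was shown, for every nonnegative $f$ and almost every $t$ at which $\al(t)>0$ and $(M f)(t)<\infty$, that
\be
   (\Ra f)(t)\le \frac{2}{K_0}\, t^{\al(t)}\,(M f)(t)\le \frac{2}{K_0}\,(M f)(t),
\ee
with the constant $2/K_0$ independent of $\al(\cdot)$. Crucially, the estimates (\ref{m1})--(\ref{m5}) only require $f\in L_1[0,1]$ together with finiteness of $(M f)(t)$, so this bound remains valid for $f\in L_1[0,1]$. Since the weak type $(1,1)$ maximal inequality quoted above the statement guarantees $(M f)(t)<\infty$ for almost all $t$, the domination holds a.e.

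For an arbitrary $f\in L_1[0,1]$ I would first pass to $|f|$, using $|(\Ra f)(t)|\le (\Ra|f|)(t)$ together with $M|f|=M f$, so that the pointwise bound $|(\Ra f)(t)|\le (2/K_0)\,(M f)(t)$ holds a.e. Consequently, for every $u>0$,
\be
   \{t\in[0,1] : |(\Ra f)(t)|\ge u\}\subseteq \Big\{t\in[0,1] : (M f)(t)\ge \tfrac{K_0}{2}\,u\Big\}
\ee
up to a null set. Applying the weak type maximal inequality to the right-hand set with threshold $\tfrac{K_0}{2}u$ yields
\be
   |\{t\in[0,1] : |(\Ra f)(t)|\ge u\}|\le c\,\frac{\|f\|_1}{(K_0/2)\,u}=\frac{2c}{K_0}\cdot\frac{\|f\|_1}{u},
\ee
which is the claimed bound with universal constant $2c/K_0$, independent of $\al(\cdot)$.

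The only point requiring care is the passage of the pointwise estimate from $L_p$ to $L_1$. In Theorem \ref{tb} the domination was applied for $f\in L_p$ with $p>1$, but its derivation never used the value of $p$; it rests solely on the integration-by-parts identity (\ref{m2}) and the elementary bound (\ref{m3}), namely $F_t(s)\le 2(M f)(t)\,s$, both of which are available for any integrable $f$ at points where $(M f)(t)$ is finite. Thus no genuine obstacle arises: the proposition follows by combining the established pointwise domination with the weak type, rather than the strong type, form of the Hardy--Littlewood inequality.
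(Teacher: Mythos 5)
Your proposal is correct and follows exactly the route the paper intends: the paper gives no written proof of Proposition \ref{wL1}, stating only that it follows "by the methods developed in the proof of Theorem \ref{tb}", and your argument supplies precisely those details --- the pointwise bound $|(\Ra f)(t)|\le (2/K_0)(Mf)(t)$ a.e.\ (whose derivation via (\ref{m1})--(\ref{m3}) indeed uses only $f\in L_1$ and finiteness of $(Mf)(t)$), combined with the weak type $(1,1)$ maximal inequality in place of \eqref{HL}. Nothing is missing.
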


The next result gives a first description of functions $\al(\cdot)$ for which $\Ra$ acts as a bounded operator
in $L_1[0,1]$.
\begin{prop} \label{bL1}
Given $\al(\cdot)$ as before, the mapping $\Ra$ is bounded in $L_1[0,1]$ if and only if
\be
\label{L1n}
     \sup_{0\le s\le 1} \int_s^1 \frac{1}{\Gamma(\al(t))} (t-s)^{\al(t)-1} \d t
     <\infty\,.
\ee
Moreover, in this case
$$
    \|\Ra : L_1[0,1]\to L_1[0,1]\|
    = \sup_{0\le s\le 1} \int_s^1 \frac{1}{\Gamma(\al(t))} (t-s)^{\al(t)-1} \d t\,.
$$
\end{prop}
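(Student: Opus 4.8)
The plan is to exploit the fact that the integral operator $\Ra$ has a nonnegative kernel and compute its $L_1 \to L_1$ operator norm by a standard duality/Fubini argument. The key object is the kernel
\[
  k(t,s) := \frac{1}{\Gamma(\al(t))}\,(t-s)^{\al(t)-1}\,\ind_{\{s<t\}}\,,
\]
so that $(\Ra f)(t) = \int_0^1 k(t,s) f(s)\,\d s$ whenever the integral makes sense. Because the kernel is nonnegative, the general principle for integral operators on $L_1$ should apply: the $L_1 \to L_1$ norm equals the essential supremum over $s$ of the integral of the kernel in the $t$-variable, which is exactly the quantity in \eqref{L1n}. So the entire statement reduces to justifying this principle carefully in our setting, paying attention to the fact that $\Ra f$ a priori only exists almost everywhere (by Proposition \ref{exist}).

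First I would prove the easy direction and the upper bound simultaneously. Assume \eqref{L1n} holds and denote the supremum by $C$. For $f \in L_1[0,1]$ with $f \ge 0$, apply Tonelli's theorem to the nonnegative integrand:
\[
  \int_0^1 (\Ra f)(t)\,\d t
  = \int_0^1 \left[\int_s^1 k(t,s)\,\d t\right] f(s)\,\d s
  \le C\,\|f\|_1\,.
\]
For general $f$, the estimate $\|\Ra f\|_1 \le \|\Ra(|f|)\|_1 \le C\,\||f|\|_1 = C\,\|f\|_1$ follows as in the proof of Theorem \ref{tb}. This shows \eqref{L1n} is sufficient for boundedness and that $\|\Ra : L_1 \to L_1\| \le C$.

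For the converse and the matching lower bound, I would show that if $\Ra$ is bounded with norm $N$, then \eqref{L1n} holds with $C \le N$. The natural approach is to test against approximate point masses: for fixed $s_0 \in [0,1)$ and small $h>0$, take $f_h := h^{-1}\ind_{[s_0, s_0+h]}$, which satisfies $\|f_h\|_1 = 1$. Then
\[
  (\Ra f_h)(t) = \frac{1}{h}\int_{s_0}^{s_0+h} k(t,s)\,\d s
  \longrightarrow k(t,s_0)
\]
for a.e.\ $t$ as $h \to 0$, by the Lebesgue differentiation theorem applied in $s$. Using Fatou's lemma one gets $\int_{s_0}^1 k(t,s_0)\,\d t \le \liminf_h \|\Ra f_h\|_1 \le N$, giving the pointwise bound $\int_{s_0}^1 k(t,s_0)\,\d t \le N$ for a.e.\ $s_0$, hence the sup in \eqref{L1n} is finite and bounded by $N$.

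The main obstacle I anticipate is making the convergence argument in the lower bound fully rigorous, because the kernel $k(t,s)$ is singular along the diagonal $t=s$ (where $(t-s)^{\al(t)-1}$ blows up when $\al(t)<1$) and because the inner function $s \mapsto \int_s^1 k(t,s)\,\d t$ must be controlled uniformly. One clean way around this is to avoid pointwise limits entirely: for each fixed $s_0$, observe that $\int_{s_0}^1 k(t,s_0)\,\d t$ equals the $L_1$-norm of the function $t \mapsto k(t,s_0)$, and relate this directly to the operator norm via a duality computation, $\sup_{s_0} \int_{s_0}^1 k(t,s_0)\,\d t = \|\Ra : L_1 \to L_1\|$, using that for nonnegative kernels the $L_1$-operator norm is the essential supremum of the column integrals. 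This is a known fact, but I would verify that the a.e.-defined nature of $\Ra f$ does not spoil the identity, which is precisely where care is needed.
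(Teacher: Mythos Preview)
Your proposal is correct and, in the sufficiency direction, identical to the paper's proof (Tonelli on the nonnegative kernel, then $\|\Ra f\|_1 \le \|\Ra(|f|)\|_1$).

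For the converse, the paper takes the second route you sketch: it writes
\[
  \mathrm{ess\,sup}_{0\le s\le 1}\int_s^1 k(t,s)\,\d t
  = \sup_{\|f\|_1\le 1,\,f\ge 0}\int_0^1\left[\int_s^1 k(t,s)\,\d t\right]f(s)\,\d s
  = \sup_{\|f\|_1\le 1,\,f\ge 0}\|\Ra f\|_1 \le \|\Ra\|\,,
\]
using Tonelli once more and the $L_1$--$L_\infty$ duality. Your primary approach via approximate point masses $f_h=h^{-1}\ind_{[s_0,s_0+h]}$ and Fatou is a concrete instantiation of the same principle. One small remark: you do not actually need the Lebesgue differentiation theorem, since for each fixed $t>s_0$ the map $s\mapsto k(t,s)$ is continuous on $[s_0,t)$, so the averages converge for \emph{every} $t>s_0$; this in fact gives the bound $\int_{s_0}^1 k(t,s_0)\,\d t\le \|\Ra\|$ for \emph{every} $s_0$, not just almost every $s_0$, which cleanly dispenses with the $\sup=\mathrm{ess\,sup}$ step that the paper's version tacitly uses.
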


\begin{proof}
Suppose first that (\ref{L1n}) holds. Given $f\in L_1[0,1]$ it follows that
\beaa
  \|\Ra f\|_1 &=&\int_0^1 \abs{\frac{1}{\Gamma(\al(t))} \int_0^t (t-s)^{\al(t)-1}\,f(s)\d s}\d t
  \\
  &\le& \int_0^1 \left[\int_s^1\frac{1}{\Gamma(\al(t))}(t-s)^{\al(t)-1}\d t\right] |f(s)|\d s
  \\
  &\le& \sup_{0\le s\le 1} \int_s^1 \frac{1}{\Gamma(\al(t))}(t-s)^{\al(t)-1} \d t\, \|f\|_1\,.
\eeaa
Hence, $\Ra$ is bounded and
$\|\Ra\|\le \sup_{0\le s\le 1} \int_s^1 \frac{1}{\Gamma(\al(t))}  (t-s)^{\al(t)-1} \d t$ .

Conversely, if $\Ra$ is bounded in $L_1[0,1]$, then
\beaa
   \sup_{0\le s\le 1} \int_s^1  \frac{1}{\Gamma(\al(t))} (t-s)^{\al(t)-1} \d t
   &=& \lefteqn{{\rm ess\, sup}_{0\le s\le 1}
   \int_s^1 \frac{1}{\Gamma(\al(t))}  (t-s)^{\al(t)-1} \d t}
   \\
   &=&   \sup_{\|f\|_1\le 1, f\ge 0}
   \int_0^1 \left[\int_s^1\frac{1}{\Gamma(\al(t))}(t-s)^{\al(t)-1}\d t\right] f(s)\d s
   \\
   &=& \sup_{\|f\|_1\le 1, f\ge 0} \|\Ra f\|_1\le \|\Ra\|\,,
\eeaa
and this completes the proof.
\end{proof}
\medskip

\begin{remark}
In particular, Proposition \ref{bL1} implies that $\Ra$ is bounded as
operator in $L_1[0,1]$ if $\al(\cdot)$ is separated from zero, i.e., if
$\inf_{0\le t\le 1}\al(t)>0$. Of course, this may also be proved directly by the estimates given
in the proof of Proposition
\ref{exist}.
\end{remark}

\begin{cor}
For bounded functions $\al(\cdot)$ condition $(\ref{L1n})$ is equivalent to
\be
\label{L1nn}
\sup_{0\le s\le 1} \int_s^1 \al(t)\,(t-s)^{\al(t)-1} \d t<\infty\,.
\ee
\end{cor}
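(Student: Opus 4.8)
The plan is to reduce both suprema to comparable integrals by exploiting the functional equation of the Gamma function. The key observation is that $\Gamma(\al(t)) = \Gamma(\al(t)+1)/\al(t)$, so the integrand appearing in $(\ref{L1n})$ can be rewritten exactly as
$$
   \frac{1}{\Gamma(\al(t))}\,(t-s)^{\al(t)-1}
   = \frac{\al(t)}{\Gamma(\al(t)+1)}\,(t-s)^{\al(t)-1}\,.
$$
Thus $(\ref{L1n})$ and $(\ref{L1nn})$ differ only by the weight factor $1/\Gamma(\al(t)+1)$, and the whole task is to show that this factor is bounded between two positive constants under the hypothesis that $\al(\cdot)$ is bounded.

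First I would use the boundedness assumption: if $\al(t)\le M$ for almost all $t$, then $\al(t)+1$ ranges in the compact interval $[1,M+1]$, on which $\Gamma$ is continuous and strictly positive. Hence there exist constants $0<c_1\le c_2<\infty$, depending only on $M$, with $c_1\le \Gamma(\al(t)+1)\le c_2$ for almost every $t$; concretely one may take $c_1=K_0$ (the global minimum from $(\ref{defK})$) and $c_2=\Gamma(M+1)$. This yields the pointwise two-sided comparison
$$
   \frac{1}{c_2}\,\al(t)\,(t-s)^{\al(t)-1}
   \le \frac{1}{\Gamma(\al(t))}\,(t-s)^{\al(t)-1}
   \le \frac{1}{c_1}\,\al(t)\,(t-s)^{\al(t)-1}\,,
$$
valid for all $0\le s\le t\le 1$. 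Integrating over $t\in[s,1]$ and taking the supremum over $s$ shows that the supremum in $(\ref{L1n})$ and the supremum in $(\ref{L1nn})$ are each finite precisely when the other is, which is the claimed equivalence.

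The only delicate point, and the reason the two formulations agree at all, is the behavior as $\al(t)\to 0$: there $1/\Gamma(\al(t))$ tends to zero, and this vanishing is accounted for on the right-hand side of $(\ref{L1nn})$ by the explicit factor $\al(t)$, since $\Gamma(\al(t)+1)\to\Gamma(1)=1$ in that regime. The boundedness of $\al(\cdot)$ is needed only to control the opposite extreme, guaranteeing that $\Gamma(\al(t)+1)$ stays bounded above so that $c_2<\infty$; without it the upper comparison could fail. I expect no genuine obstacle here beyond recording these elementary bounds on $\Gamma$ over a compact interval, so the proof is short.
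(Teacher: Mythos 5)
Your argument is correct and is essentially the paper's own proof: both rewrite $1/\Gamma(\al(t))=\al(t)/\Gamma(\al(t)+1)$ and use that $\Gamma(\al(t)+1)$ is bounded above and below by positive constants when $\al(\cdot)$ is bounded. The only slip is cosmetic: the explicit upper constant should be $\max\{1,\Gamma(M+1)\}$ rather than $\Gamma(M+1)$, since $\Gamma$ dips below $1$ on part of $[1,M+1]$; your preceding appeal to continuity and positivity on a compact interval already covers this.
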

\begin{proof}
This is a direct consequence of Proposition \ref{bL1} and
$$
K_0\le \Gamma(\al(t)+1)\le \max\{1, \Gamma(\al_1 +1)\}
$$
provided that $\sup_{0\le t\le 1}\al(t)=\al_1<\infty$ .
\end{proof}

\begin{prop} \label{nc}
Let
\be \label{al1}
    \al(t)=\left\{
           \begin{array}{ccc}
                   \frac{1}{|\ln t|}&:& 0<t\le \ex^{-1}\\
                    1 &:& \ex^{-1}\le t\le 1\,,
            \end{array}
            \right.
\ee
Then $\Ra$ is \emph{not} bounded in $L_1[0,1]$.
\end{prop}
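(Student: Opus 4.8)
The plan is to apply Proposition \ref{bL1}: since $\al(\cdot)$ is bounded and a.e.~positive, $\Ra$ is bounded in $L_1[0,1]$ if and only if the quantity
\[
I(s):=\int_s^1 \frac{1}{\Gamma(\al(t))}\,(t-s)^{\al(t)-1}\,\d t
\]
stays bounded as $s$ ranges over $[0,1]$. Hence it suffices to exhibit a sequence $s_n\downarrow 0$ along which $I(s_n)\to\infty$, and I would look near the left endpoint, where $\al(t)=1/|\ln t|\downarrow 0$ and the exponent $\al(t)-1$ approaches $-1$, so that $(t-s)^{\al(t)-1}$ develops a near $(t-s)^{-1}$ singularity.

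The decisive observation I would exploit is the exact identity valid on $(0,\ex^{-1}]$, namely $t^{\al(t)}=\exp\!\big(\tfrac{\ln t}{|\ln t|}\big)=\ex^{-1}$, which follows because $\ln t<0$ there. I would then bound $I(s)$ from below by discarding the (bounded) contribution of $[\ex^{-1},1]$ and restricting the integral to $[s,\ex^{-1}]$, on which $0<\al(t)\le 1$. On this range two elementary estimates apply: first $1/\Gamma(\al(t))\ge\al(t)$, which holds since $\Gamma(\al+1)\le 1$ for $0\le\al\le1$ gives $\Gamma(\al)=\Gamma(\al+1)/\al\le 1/\al$; and second $(t-s)^{\al(t)-1}\ge t^{\al(t)-1}$, because the exponent is negative and $t-s\le t$. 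Combining these with the identity above yields
\[
I(s)\ge\int_s^{\ex^{-1}}\al(t)\,t^{\al(t)}\,t^{-1}\,\d t
=\ex^{-1}\int_s^{\ex^{-1}}\frac{\d t}{t\,|\ln t|}\,.
\]

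The substitution $u=|\ln t|$ turns the remaining integral into $\int_1^{|\ln s|}\d u/u=\ln|\ln s|$, so that $I(s)\ge\ex^{-1}\ln\ln(1/s)\to\infty$ as $s\downarrow 0$. By Proposition \ref{bL1} this forces $\Ra$ to be unbounded in $L_1[0,1]$, as claimed. The one point requiring care---and the conceptual heart of the argument---is that the prefactor $1/\Gamma(\al(t))$ itself tends to $0$ like $\al(t)=1/|\ln t|$ near the origin, so one might fear it kills any divergence; the miracle is that after using $t^{\al(t)}=\ex^{-1}$ this decay is exactly matched against the accumulated $t^{-1}$ coming from the kernel, leaving the borderline density $1/(t|\ln t|)$, whose integral diverges, albeit only at the slow iterated-logarithmic rate $\ln\ln(1/s)$.
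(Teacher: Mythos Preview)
Your proof is correct and follows essentially the same route as the paper: both arguments restrict to $[s,\ex^{-1}]$, use the key identity $t^{\al(t)}=\ex^{-1}$ together with the monotonicity bound $(t-s)^{\al(t)-1}\ge t^{\al(t)-1}$, and reduce to the divergence of $\int_s^{\ex^{-1}}\tfrac{\d t}{t|\ln t|}$. The only cosmetic difference is that the paper invokes the corollary replacing $1/\Gamma(\al(t))$ by $\al(t)$, whereas you derive $1/\Gamma(\al(t))\ge\al(t)$ directly; the substance is identical.
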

\begin{proof}
We show that (\ref{L1nn}) is violated. This follows by
\beaa
    \sup_{0\le s\le 1} \int_s^1 \al(t)\,(t-s)^{\al(t)-1} \d t&\ge&
    \lim_{s\to 0}\int_s^{\ex^{-1}} \al(t)\,(t-s)^{\al(t)-1}\,\d t\\
    &\ge&
    \lim_{s\to 0} \int_s^{\ex^{-1}} \frac{1}{t\,|\ln t|}\,\ex^{-1}\,\d t =\infty\,.
\eeaa
Hence $\Ra$ cannot be bounded in $L_1[0,1]$.
\end{proof}
\medskip

\begin{remark}
In view of Proposition \ref{wL1}, the Closed Graph Theorem implies the following:
If $\al(\cdot)$ is as in (\ref{al1}), then there are functions $f\in L_1[0,1]$ such that
$\Ra f\notin L_1[0,1]$.
\end{remark}
\medskip

For concrete $\al(\cdot)$ condition (\ref{L1n}) might be difficult to verify. Therefore
we are interested in criteria which are easier to handle. Fortunately, under a weak
additional regularity assumption for $\al(\cdot)$ such a criterion exists.

\begin{prop} 
Assume that $\al(\cdot)$ is bounded and satisfies the following regularity condition:
$\exists\, c_1, c_2>0$ such that
\begin{equation} \label{regal}
      c_1 \al(s) \le \al(t)\le c_2\al(s), \qquad s\le t\le \min\{2s,1\},\; 0\le s\le 1\,.
\end{equation}
Then the operator $\Ra$ is bounded in  $L_1[0,1]$ if and only if
\begin{equation} \label{int}
     \int_0^1 \al(t) t^{\al(t)-1} dt <\infty
\end{equation}
holds.
\end{prop}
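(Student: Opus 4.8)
The plan is to use the criterion from Proposition \ref{bL1}, which says that $\Ra$ is bounded in $L_1[0,1]$ precisely when $\sup_{0\le s\le 1}\int_s^1 \frac{1}{\Gamma(\al(t))}(t-s)^{\al(t)-1}\d t<\infty$. Since $\al(\cdot)$ is assumed bounded, the Corollary lets me replace the factor $1/\Gamma(\al(t))$ by $\al(t)$ up to two-sided constants, so it suffices to prove that the supremum condition \eqref{L1nn}, namely $\sup_s \int_s^1 \al(t)(t-s)^{\al(t)-1}\d t<\infty$, is equivalent to the single-integral condition \eqref{int}, $\int_0^1 \al(t) t^{\al(t)-1}\d t<\infty$. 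The direction $\eqref{L1nn}\Rightarrow\eqref{int}$ is immediate by taking $s=0$ in the supremum. The real content is the reverse implication, and this is where the regularity hypothesis \eqref{regal} enters.

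For the converse, I would fix $s\in(0,1)$ and split the inner integral $\int_s^1 \al(t)(t-s)^{\al(t)-1}\d t$ at the dyadic point $t=\min\{2s,1\}$. On the ``far'' part $t\ge 2s$ one has $t-s\ge t/2$, so $(t-s)^{\al(t)-1}$ is comparable to $t^{\al(t)-1}$ (the discrepancy is a factor $2^{1-\al(t)}$, which is bounded since $\al(\cdot)$ is bounded); hence this piece is dominated by a constant multiple of $\int_0^1 \al(t)t^{\al(t)-1}\d t$, which is finite by \eqref{int}. The delicate part is the ``near'' piece $\int_s^{\min\{2s,1\}} \al(t)(t-s)^{\al(t)-1}\d t$: here $t-s$ ranges over $[0,s]$ and can be arbitrarily small, so $(t-s)^{\al(t)-1}$ can blow up. This is exactly where I expect the main obstacle, and where condition \eqref{regal} is designed to help.

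On the near interval $[s,\min\{2s,1\}]$ the regularity assumption gives $\al(t)\asymp\al(s)$, so I can replace the variable exponent $\al(t)$ by the constant $\al(s)$ up to bounded factors. The integral then becomes comparable to $\al(s)\int_0^{s}u^{\al(s)-1}\d u = s^{\al(s)}$ (substituting $u=t-s$), which is bounded by $1$ uniformly in $s$ because $s\le 1$ and $\al(s)>0$. One subtlety to handle carefully is that $(t-s)^{\al(t)-1}$ with varying exponent is not literally $(t-s)^{\al(s)-1}$; for $t-s\le 1$ and $\al(t)$ within a bounded multiple of $\al(s)$ one bounds $(t-s)^{\al(t)-1}$ by $(t-s)^{c_1\al(s)-1}$ (using $c_1\al(s)\le\al(t)$ and $t-s\le 1$), after which the same elementary integration applies and yields a bound of order $s^{c_1\al(s)}\le 1$. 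Combining the bounded near piece with the far piece controlled by \eqref{int} gives a uniform bound on the supremum in \eqref{L1nn}, completing the proof.
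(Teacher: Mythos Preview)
Your proposal is correct and follows essentially the same route as the paper: split $\int_s^1$ at $\min\{2s,1\}$, use the regularity \eqref{regal} to reduce the near piece to an elementary integral $\asymp \tfrac{c_2}{c_1}\,s^{c_1\al(s)}\le \tfrac{c_2}{c_1}$, and on the far piece use $t-s\ge t/2$ to compare $(t-s)^{\al(t)-1}$ with $t^{\al(t)-1}$ and invoke \eqref{int}. The only cosmetic difference is that for the converse the paper passes to the limit $s\to 0$ via Fatou rather than plugging in $s=0$, but since the supremum in \eqref{L1n} is taken over $0\le s\le 1$ your argument is equally valid.
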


\begin{proof}
Assume first that \eqref{int} holds.
We will check that the expression in \eqref{L1n} is finite.
For any $s\in[0,1]$ we have
$$
  \int_s^1     \frac{1}{\Gamma(\al(t))} \ (t-s)^{\al(t)-1} \d t
 =
 \left( \int_s^{\min(2s,1)}
 + \int_{\min(2s,1)}^1  \right)  \frac{1}{\Gamma(\al(t))} \ (t-s)^{\al(t)-1} \d t\,.
$$
For the first integral by \eqref{regal} we obtain
\begin{eqnarray*}
   \int_s^{\min(2s,1)} \frac{1}{\Gamma(\al(t))} \  (t-s)^{\al(t)-1} \d t
   &\le& \frac{1}{K_0}\,\int_s^{\min(2s,1)}  \al(t)(t-s)^{\al(t)-1} \d t
\\
   &\le& \frac{1}{K_0}\, \int_s^{\min(2s,1)}  c_2 \al(s)(t-s)^{c_1\al(s)-1} \d t
\\
   &\le& \frac{1}{K_0}\, c_2\, \al(s)  \int_s^{2s} (t-s)^{c_1\al(s)-1} \d t \le \frac{c_2}{K_0\,c_1}\,.
\end{eqnarray*}
To estimate the second integral we use \eqref{int} which implies
\begin{eqnarray*}
   \int_{\min(2s,1)}^1   \frac{1}{\Gamma(\al(t))} \    (t-s)^{\al(t)-1} \d t
   &\le&  \frac{1}{K_0}\, \int_{\min(2s,1)}^1   \al(t)(t-s)^{\al(t)-1} \d t
   \\
   &=&  \frac{1}{K_0}\,\int_{\min(2s,1)}^1   \al(t) t^{\al(t)-1}
        \left(\frac{t-s}{t}\right)^{\al(t)-1} \d t
   \\
   &\le&  \frac{2}{K_0}\, \int_{\min(2s,1)}^1   \al(t) t^{\al(t)-1}  \d t
   \\
   &\le& \frac{2}{K_0}\,\int_{0}^1   \al(t) t^{\al(t)-1}   \d t <\infty.
\end{eqnarray*}
Thus the supremum in \eqref{L1n} is finite and we see that operator $\Ra$
is bounded in $L_1[0,1]$.
\medskip

Conversely, assume that operator $\Ra$ is bounded in $L_1[0,1]$.
Then the supremum in  \eqref{L1n} is finite and by Fatou's lemma and by Proposition \ref{bL1} it follows that
\[
  \int_0^1   \frac{1}{\Gamma(\al(t))}  t^{\al(t)-1} \d t
  \le \liminf_{s\to 0}  \int_s^1 \frac{1}{\Gamma(\al(t))}
  (t-s)^{\al(t)-1} \d t \le ||\Ra|| <\infty\,.
\]
This completes the proof.
\end{proof}

\subsection{$\Ra$ as an
operator from $L_p[0,1]$ to $L_q[0,1]$}

The aim of this subsection is to investigate the following question:
Given $1\le p,q\le\infty$, for which $\al(\cdot)$ is $\Ra$ bounded
from $L_p[0,1]$ into $L_q[0,1]$ ? Let us first recall the answer to
this question in the case of real $\al>0$ (cf.~Theorem 3.5 in \cite{SKM} and Theorem 383 in \cite{HarLit}).
\begin{prop}
\label{tbc}
If $\al>(\tfrac 1 p -\frac 1 q)_+$, then $R^\al$ is bounded
from $L_p[0,1]$ into $L_q[0,1]$. Moreover, if $1<p< 1/\al$, then $R^\al$
is also bounded from $L_p[0,1]$ into $L_q[0,1]$ in the borderline case
$1/q = 1/p -\al$.
\end{prop}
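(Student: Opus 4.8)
The plan is to reduce everything to the boundedness properties already available, namely Theorem \ref{tb} (uniform boundedness in $L_p$ for $1<p\le\infty$) together with the weak semi-group identity of Proposition \ref{sg}, and to use classical duality and interpolation to fill in the remaining cases. Since $\al$ is constant here, $R^\al$ is a genuine convolution operator with the integrable kernel $k_\al(s)=s^{\al-1}/\Gamma(\al)$ on $[0,1]$, so I would exploit convolution structure throughout.

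First I would treat the elementary endpoints. If $1\le p\le q$ is not required, note that for $p\ge q$ the embedding $L_p[0,1]\hookrightarrow L_q[0,1]$ is bounded, so it suffices to prove $R^\al:L_p\to L_p$ bounded when $1/p-1/q\le 0$; this is covered by Theorem \ref{tb} for $p>1$ and, for $p=1$, by the fact that the kernel $k_\al\in L_1[0,1]$ makes $R^\al=k_\al *(\cdot)$ bounded on $L_1$ by Young's inequality (indeed $\|R^\al:L_1\to L_1\|\le\|k_\al\|_1=1/\Gamma(\al+1)$). Thus the genuinely new content is the case $p<q$, i.e.\ $1/p-1/q>0$, under the hypothesis $\al>1/p-1/q$.

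For the main range $p<q$ with $\al>1/p-1/q$, the natural tool is Young's convolution inequality in the sharp form $\|k*f\|_q\le\|k\|_r\,\|f\|_p$ where $1+1/q=1/r+1/p$, i.e.\ $1/r=1+1/q-1/p$. Here $r\ge 1$ precisely because $1/p-1/q\le 1$, and one computes $k_\al\in L_r[0,1]$ iff $r(\al-1)>-1$, i.e.\ iff $\al>1-1/r=1/p-1/q$. So the open condition $\al>1/p-1/q$ is exactly what guarantees $k_\al\in L_r$, and Young's inequality then yields the desired bound. This handles all $1\le p\le q\le\infty$ in the strict case cleanly; I would present it as the central computation. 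Combined with the previous paragraph, the first assertion of the Proposition is complete.

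The borderline case $1/q=1/p-\al$ with $1<p<1/\al$ is the delicate part, and I expect this to be the main obstacle: Young's inequality fails because $k_\al\notin L_r$ exactly at the endpoint $r(\al-1)=-1$, so one only obtains $k_\al\in L_{r,\infty}$ (weak-$L_r$). The remedy is the Hardy--Littlewood--Sobolev inequality for the weak-type kernel $s^{\al-1}$: convolution with a function in $L_{r,\infty}$ maps $L_p\to L_q$ boundedly whenever $1<p<q<\infty$ and $1/q=1/p-(1-1/r)=1/p-\al$, and the strict inequality $p>1$ (equivalently $1<p<1/\al$) is precisely the non-endpoint condition needed for the strong-type $L_p\to L_q$ estimate to hold. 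I would cite the HLS inequality (or the Marcinkiewicz interpolation argument underlying it, starting from the weak-type bound of Proposition \ref{wL1}'s analogue for constant $\al$) and verify that the arithmetic matches the stated exponent relation. Care is needed at the two excluded endpoints $p=1$ and $p=1/\al$, where the strong-type estimate genuinely fails, which is why the hypothesis is the open condition $1<p<1/\al$.
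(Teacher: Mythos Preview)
The paper does not give its own proof of Proposition~\ref{tbc}; it is stated as a classical fact with references to \cite{SKM}, Theorem~3.5, and \cite{HarLit}, Theorem~383. Your argument via Young's inequality for the strict case $\al>1/p-1/q$ and the Hardy--Littlewood--Sobolev inequality (weak--type Young) for the borderline $1/q=1/p-\al$, $1<p<1/\al$, is exactly the standard route taken in those references and is correct.

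Two minor remarks. First, you do not need to invoke Theorem~\ref{tb} for the case $p\ge q$: since $\al$ is constant, Young's inequality with $r=1$ already gives $\|R^\al f\|_p\le \|k_\al\|_1\|f\|_p=\|f\|_p/\Gamma(\al+1)$ for every $1\le p\le\infty$, so the $L_p\to L_p$ bound is elementary and there is no need to pass through the maximal function. Second, your parenthetical appeal to Proposition~\ref{wL1} for the borderline case is not quite the right endpoint: the Marcinkiewicz argument behind HLS interpolates between two weak--type bounds for convolution with $s^{\al-1}$ at different exponents, not the $L_1\to L_{1,\infty}$ bound of Proposition~\ref{wL1}. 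It is cleaner simply to cite HLS (or weak Young for $k_\al\in L_{1/(1-\al),\infty}$) directly, as you in fact do.
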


For variable functions $\al(\cdot)$ we have the following result.
\begin{prop} \label{p1_pq}
 Let $p>1$ and $q<\infty$.
 For any $\al(\cdot)$  satisfying $\al(t)>(\tfrac 1p-\tfrac 1q)_+$ for almost all
 $t\in[0,1]$, the operator $\Ra$ is bounded from $L_p[0,1]$ into $L_q[0,1]$ .
\end{prop}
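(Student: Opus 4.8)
The plan is to reduce the variable-order case to the classical borderline estimate of Proposition \ref{tbc}, exploiting the fact that $p>1$ gives us room. The key idea is to split $[0,1]$ according to the level sets of $\al(\cdot)$ used already in Proposition \ref{exist}. Set $\beta := (\tfrac1p-\tfrac1q)_+$ and recall that $\al(t)>\beta$ a.e. For $\eta>0$ define
\[
   B_\eta := \{t\in[0,1] : \al(t)\ge \beta+\eta\}\,.
\]
By the a.e.\ positivity of $\al(\cdot)-\beta$, the sets $B_\eta$ increase to a set of full measure as $\eta\downarrow 0$. My first step is to show that it suffices to bound $\Ra$ restricted to each $B_\eta$ (i.e.\ the operator $f\mapsto \ind_{B_\eta}\cdot\Ra f$) uniformly, and then pass to the limit. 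Because $\Ra$ already maps $L_p$ into $L_{1,\infty}$ by Proposition \ref{wL1} (or directly into a weak space), a monotone/Fatou argument on $\|\ind_{B_\eta}\Ra f\|_q$ as $\eta\downarrow0$ should recover $\|\Ra f\|_q$, so the real work is the uniform bound on each piece.

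The second and central step is to bound $\ind_{B_\eta}\Ra f$ by comparison with a \emph{classical} operator. For $t\in B_\eta$ we have $\al(t)\ge \beta+\eta>\beta=(\tfrac1p-\tfrac1q)_+$. The heuristic is that larger exponents make the kernel $(t-s)^{\al(t)-1}/\Gamma(\al(t))$ \emph{smaller} near the singularity $s=t$, so the variable-order kernel should be dominated by a fixed classical kernel $R^{\beta+\eta}$ up to a constant. Concretely, I would split the inner integral at $t-s=1$ (automatic on $[0,1]$) or rather use that for $0<t-s\le1$,
\[
   \frac{(t-s)^{\al(t)-1}}{\Gamma(\al(t))}
   \le \frac{(t-s)^{\beta+\eta-1}}{K_0}\,,
\]
valid precisely because $\al(t)\ge\beta+\eta$ and $t-s\le1$ forces $(t-s)^{\al(t)-1}\le (t-s)^{\beta+\eta-1}$, together with $\Gamma(\al(t))\ge K_0$ from \eqref{defK}. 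Pointwise this gives $|(\Ra f)(t)|\le K_0^{-1}\Gamma(\beta+\eta)\,(R^{\beta+\eta}|f|)(t)$ for $t\in B_\eta$, and since $\beta+\eta>(\tfrac1p-\tfrac1q)_+$ the classical operator $R^{\beta+\eta}$ is bounded from $L_p$ to $L_q$ by Proposition \ref{tbc}. This yields
\[
   \|\ind_{B_\eta}\Ra f\|_q \le \tfrac{\Gamma(\beta+\eta)}{K_0}\,\|R^{\beta+\eta}\|_{p\to q}\,\|f\|_p\,.
\]

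The main obstacle I anticipate is that the constant on the right-hand side blows up as $\eta\downarrow0$: both $\|R^{\beta+\eta}\|_{p\to q}$ and the way the level sets exhaust $[0,1]$ degenerate exactly at the critical exponent $\beta=(\tfrac1p-\tfrac1q)_+$. So the naive exhaustion gives boundedness on each $B_\eta$ but not a uniform bound, and hence not boundedness of $\Ra$ itself. To fix this I would not use a single fixed $\eta$ but instead decompose $[0,1]$ into the union of the corona sets where $\al(t)$ lies in dyadic bands above $\beta$, controlling each band by its own classical operator and summing a convergent geometric series in the norms; alternatively, one adapts the maximal-function argument from the proof of Theorem \ref{tb} directly to the off-diagonal ($L_p\to L_q$) setting, replacing the Hardy--Littlewood maximal function by a fractional maximal operator whose $L_p\to L_q$ boundedness for the supercritical range $\al>(\tfrac1p-\tfrac1q)_+$ is classical. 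The hypotheses $p>1$ and $q<\infty$ are exactly what make these maximal/fractional-maximal inequalities available, so I expect the fractional-maximal route to be the cleanest and to furnish the required uniform constant.
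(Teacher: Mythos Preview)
Your proposal has a genuine gap: neither suggested fix actually closes the argument, and you overlook the two-line route the paper takes.

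First, your diagnosis of the blow-up is slightly off. In the pointwise bound
\[
   \|\ind_{B_\eta}\Ra f\|_q \le \tfrac{\Gamma(\beta+\eta)}{K_0}\,\|R^{\beta+\eta}\|_{p\to q}\,\|f\|_p\,,
\]
the operator norm $\|R^{\beta+\eta}\|_{p\to q}$ stays bounded as $\eta\downarrow 0$ in \emph{all} cases (for $p<q$ the borderline $R^\beta$ is itself bounded by Proposition~\ref{tbc}; for $p\ge q$ one has $\|R^\eta\|_{p\to p}\le 1/\Gamma(\eta+1)\le 1/K_0$). The only divergence is the prefactor $\Gamma(\beta+\eta)$, and this occurs \emph{only} when $\beta=0$, i.e.\ when $p\ge q$. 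So for $p<q$ your naive level-set bound is already uniform and Fatou finishes the proof; you have made that case harder than it is. For $p\ge q$, however, your dyadic-band idea does not yield a convergent geometric series: on the band $\{2^{-k}\le\al(t)<2^{-k+1}\}$ the comparison constant is of order $\Gamma(2^{-k})\sim 2^k$, and the $\ell^q$-sum over $k$ diverges since you have no control on the measures of the bands. The fractional-maximal alternative is left as a sketch.

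The paper bypasses all of this with the semi-group identity (Proposition~\ref{sg}). For $p\ge q$ one simply applies Theorem~\ref{tb} in $L_p$ and embeds $L_p\hookrightarrow L_q$. For $p<q$ one sets $\beta=1/p-1/q$ and factors
\[
   \Ra = R^{\al(\cdot)-\beta}\circ R^\beta\,,
\]
where $R^\beta:L_p\to L_q$ is bounded by the classical borderline estimate (Proposition~\ref{tbc}, using $q<\infty$ to ensure $p<1/\beta$), and $R^{\al(\cdot)-\beta}:L_q\to L_q$ is bounded by Theorem~\ref{tb} with the uniform constant $c_q$ (using $q>1$). The point you missed is that Theorem~\ref{tb} already handles an exponent that is merely a.e.\ positive with a constant independent of how small it gets; spending exactly $\beta$ on the classical factor and leaving $\al(\cdot)-\beta>0$ for Theorem~\ref{tb} avoids any limiting procedure.
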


\begin{proof}
If $p\ge q$, then we have $(\tfrac 1p-\tfrac 1q)_+=0$. Thus let us take any $\al(\cdot)>0$ a.e. The operator
$\Ra :L_p[0,1]\to L_q[0,1]$ may be considered as composition of
$\Ra :L_p[0,1]\to L_p[0,1]$ with the (bounded) embedding from $L_p[0,1]$ into $L_q[0,1]$.
Theorem \ref{tb} applies to $\Ra$ as operator in $L_p[0,1]$ (recall that we assume $p>1$) and we obtain the boundedness
of the composition, hence of $\Ra$ from $L_p[0,1]$ into $L_q[0,1]$.

If $p<q$, set $\beta:=1/p - 1/q$ , hence by assumption $\al(t)>\beta$ a.e.
In view of Proposition \ref{sg} we may write $R^\al :L_p[0,1]\to L_q[0,1]$ as the
composition of $R^{\al(\cdot)-\beta} :L_q[0,1]\to L_q[0,1]$ with
$R^\beta :L_p[0,1]\to L_q[0,1]$. Proposition \ref{tbc} yields the boundedness of $R^\beta$
(recall that our assumption $q<\infty$ guarantees that $p<1/\beta$),  while
Theorem \ref{tb} applies to  $R^{\al(\cdot)-\beta}$ and we obtain the boundedness
of the composition.
\end{proof}
\medskip

Finally, let us briefly dwell on the case $q=\infty$ excluded in the previous
proposition. Assuming $\al(\cdot)>\tfrac 1p$ a.e., that is necessary anyway,
in this case we have
\beaa
   ||\Ra :L_p[0,1]\to L_\infty[0,1]||
   &=& \sup_{||f||_p\le 1} \sup_{0\le t\le 1}
   \left| \frac{1}{\Gamma(\al(t))} \int_0^t (t-s)^{\al(t)-1}f(s)\d s \right|
\\
   &=& \sup_{0\le t\le 1} \sup_{||f||_p\le 1}
   \left| \frac{1}{\Gamma(\al(t))} \int_0^t (t-s)^{\al(t)-1}f(s)\d s \right|
\\
   &=& \sup_{0\le t\le 1} \frac{1}{\Gamma(\al(t))}
   \left[  \int_0^t (t-s)^{(\al(t)-1)p'}\d s \right]^{1/p'}
\\
   &=& \frac{1}{(p')^{1/p'}} \sup_{0\le t\le 1} \frac{1}{ \Gamma(\al(t))}
   \frac{ t^{\al(t)-1/p}} {(\al(t)-1/p)^{1/p'}}\,.
\eeaa
Hence, the necessary and sufficient condition for boundedness of $\Ra$ is
\[
    \sup_{0\le t\le 1}
    t^{\al(t)-1/p} (\al(t)-1/p)^{-1/p'} <\infty.
\]
In particular, if $\Ra :L_p[0,1]\to L_\infty[0,1]$ is bounded, then
$\al(\cdot)$ is uniformly separated from $\tfrac 1p$ outside  any neighborhood of zero.

\section{Scaling properties}
\setcounter{equation}{0}
\label{Sp}

The aim of this section is as follows: For a number $r>0$ and a function $\al(\cdot)$
on $[0,r]$ being a.e.~positive we regard $\Ra$ as operator from $L_p[0,r]$ into
$L_q[0,r]$, i.e., given $f\in L_p[0,r]$ it holds
$$
    (\Ra f)(t)
    :=\frac{1}{\Gamma(\al(t))}\int_0^t (t-s)^{\al(t)-1} f(s)\,\d s\,,\quad 0\le t\le r\,.
$$
The question is now, how the operator $\Ra$ acting on $[0,r]$ may be transformed into
a suitable $R^{\tilde\al(\cdot)}$ acting on $[0,1]$ .
\medskip

To answer this, let us introduce the following notation:
For $1\le p\le \infty$ we define the isometry $J_p : L_p[0,1]\to L_p[0,r]$ by
$$
   J_p f(s):= r^{-1/p}\,f(s/r)\,,\quad 0\le s\le r\,,
$$
with the obvious modification for $p=\infty$. Furthermore, we introduce a function
$\tilde\al(\cdot)$ on $[0,1]$ by
$$
    \tilde\al(t) := \al(r\,t)\,,\quad 0\le t\le 1\,,
$$
and, finally, a multiplication operator  $M_{\al,r}$ by
$$
   (M_{\al,r} g)(t):= r^{\tilde\al(t)+1/q-1/p}\cdot g(t)\,,\quad 0\le t\le 1\,.
$$

Now we are in position to state and to prove the announced scaling property of $\Ra$.

\begin{prop} \label{scale}
It holds
$$
   J_q^{-1}\circ \Big[\Ra : L_p[0,r]\to L_q[0,r]\Big]\circ J_p
   = M_{\al,r}\circ \Big[R^{\tilde\al(\cdot)} : L_p[0,1]\to L_q[0,1] \Big]\,.
$$
\end{prop}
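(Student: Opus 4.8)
The plan is to verify the claimed operator identity by a direct change-of-variables computation, applying both sides to an arbitrary $f\in L_p[0,1]$ and checking that the resulting functions in $L_q[0,1]$ agree pointwise a.e. Since all the operators involved ($J_p$, $J_q$, $\Ra$, $R^{\tilde\al(\cdot)}$, $M_{\al,r}$) are explicitly given by integral or multiplication formulas, the identity should reduce to a substitution $s = r\sigma$ in the defining integral for $\Ra$ on $[0,r]$. The essential point is to track how the scaling factor $r$ propagates through the kernel $(t-s)^{\al(t)-1}$, the Gamma prefactor, and the two isometry normalizations $r^{-1/p}$ and $r^{\pm 1/q}$.

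First I would fix $f\in L_p[0,1]$ and compute the left-hand side. Writing $g := J_p f$, so $g(s) = r^{-1/p} f(s/r)$ for $0\le s\le r$, I would evaluate $(\Ra g)(\tau)$ for $\tau\in[0,r]$ using the definition on $[0,r]$, then substitute $s = r\sigma$ and $\tau = rt$ with $t\in[0,1]$. Under this substitution, $(\tau - s)^{\al(\tau)-1} = r^{\al(rt)-1}(t-\sigma)^{\tilde\al(t)-1}$, the differential $\d s = r\,\d\sigma$ contributes an extra factor $r$, and the density gives $g(r\sigma) = r^{-1/p} f(\sigma)$; since $\al(\tau) = \al(rt) = \tilde\al(t)$, the Gamma factor becomes $1/\Gamma(\tilde\al(t))$. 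Collecting powers of $r$, I would find
\[
  (\Ra g)(rt)
  = r^{\tilde\al(t)-1/p}\cdot\frac{1}{\Gamma(\tilde\al(t))}\int_0^t (t-\sigma)^{\tilde\al(t)-1} f(\sigma)\,\d\sigma
  = r^{\tilde\al(t)-1/p}\,\bigl(R^{\tilde\al(\cdot)} f\bigr)(t)\,.
\]
Applying $J_q^{-1}$, which sends a function $h$ on $[0,r]$ to $r^{1/q} h(rt)$ on $[0,1]$, introduces the final factor $r^{1/q}$, yielding $r^{\tilde\al(t)+1/q-1/p}\,\bigl(R^{\tilde\al(\cdot)} f\bigr)(t)$. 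This is exactly $\bigl(M_{\al,r}\circ R^{\tilde\al(\cdot)} f\bigr)(t)$ by the definition of the multiplication operator $M_{\al,r}$, so both sides coincide and the identity holds.

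There is no serious obstacle here; the proposition is a bookkeeping statement about how the three sources of scale factors combine. The only point requiring mild care is the exponent accounting: one must be careful that $J_p$ carries $r^{-1/p}$ while the inverse isometry $J_q^{-1}$ carries $r^{+1/q}$, and that the kernel plus Jacobian together produce $r^{\tilde\al(t)}$ rather than $r^{\tilde\al(t)-1}$. A secondary routine check is that $J_p^{-1}$ and $J_q^{-1}$ are well defined as genuine inverse isometries (which is immediate since $J_p$ is a bijective isometry of $L_p$), and that the substitution is valid a.e.\ for every $f\in L_p[0,1]$, for which it suffices that $\Ra g$ is well defined a.e.\ on $[0,r]$ — the analogue of Proposition \ref{exist} on the interval $[0,r]$, obtained by the identical level-set argument. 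Since $\tilde\al(t) = \al(rt)$ is a.e.\ positive whenever $\al(\cdot)$ is, this presents no difficulty.
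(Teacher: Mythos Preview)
Your proof is correct and follows essentially the same route as the paper's: both compute $\Ra(J_p f)$ on $[0,r]$ by the substitution $s=r\sigma$, $\tau=rt$, collect the powers $r^{\tilde\al(t)-1}\cdot r\cdot r^{-1/p}$, and identify the result with $J_q\circ M_{\al,r}\circ R^{\tilde\al(\cdot)}f$. The only cosmetic difference is that the paper writes the identity in the equivalent form $\Ra\circ J_p = J_q\circ M_{\al,r}\circ R^{\tilde\al(\cdot)}$ directly, whereas you apply $J_q^{-1}$ explicitly at the end.
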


\begin{proof}
Given  a function $f\in L_p[0,1]$ elementary calculations lead to
\beaa
   \Ra(J_p f)(t)&=& r^{-1/p} \,\frac{1}{\Gamma(\al(t))}\int_0^t(t-s)^{\al(t)-1}\,f(s/r)\,\d s
   \\
   &=& r^{1 -1/p} \,\frac{1}{\Gamma(\al(t))}\int_0^{t/r}(t-r s)^{\al(t)-1}\,f(s)\,\d s
   \\
   &=& r^{\tilde\al(t/r) -1/p} \,\frac{1}{\Gamma(\tilde\al(t/r))}
       \int_0^{t/r}(t/r -s)^{\tilde\al(t/r)-1}\,f(s)\,\d s
   \\
   &=&(J_q\circ M_{\al,r}\circ R^{\tilde\al(\cdot)} f)(t)\,,\quad 0\le t\le r\,.
\eeaa
This completes the proof.
\end{proof}

\begin{cor}
\label{sc1}
If $1<p\le\infty$, there is a universal $c_p>0$ such that for all $r\in(0,1]$ we have
\be \label{Rar}
    \|\Ra : L_p[0,r]\to L_p[0,r]\|\le c_p\,r^{\al_0}\,.
\ee
Here $\al_0$ is defined by $\al_0:=\inf_{0\le t\le r}\al(t)$ .
\end{cor}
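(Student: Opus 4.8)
The plan is to specialize the scaling identity of Proposition~\ref{scale} to the case $p=q$ and then simply read off the operator norm. When $p=q$ the multiplication operator $M_{\al,r}$ simplifies considerably: its defining exponent $\tilde\al(t)+1/q-1/p$ collapses to $\tilde\al(t)$, so that $(M_{\al,r}g)(t)=r^{\tilde\al(t)}\,g(t)$. Since $J_p$ is an isometry, conjugation by it preserves operator norms, and Proposition~\ref{scale} therefore gives
$$
   \|\Ra : L_p[0,r]\to L_p[0,r]\|
   = \|M_{\al,r}\circ R^{\tilde\al(\cdot)} : L_p[0,1]\to L_p[0,1]\|\,.
$$

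Next I would factor the right-hand side using submultiplicativity of operator norms, writing $\|M_{\al,r}\circ R^{\tilde\al(\cdot)}\|\le \|M_{\al,r}\|\cdot\|R^{\tilde\al(\cdot)}\|$. The second factor is controlled by Theorem~\ref{tb}: since $p>1$ and $\tilde\al(\cdot)$ is again a measurable, a.e.-positive function, we have $\|R^{\tilde\al(\cdot)}:L_p[0,1]\to L_p[0,1]\|\le c_p$ with $c_p$ independent of the exponent function, hence independent of $\al(\cdot)$ and of $r$. This uniformity is exactly the feature of Theorem~\ref{tb} that makes the estimate possible.

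It then remains to compute the norm of the multiplication operator. As an operator on $L_p[0,1]$, multiplication by the bounded function $t\mapsto r^{\tilde\al(t)}$ has norm at most $\sup_{0\le t\le 1} r^{\tilde\al(t)}$. Because $0<r\le 1$, the map $x\mapsto r^x$ is nonincreasing, so the supremum is attained at the infimum of the exponent, and
$$
   \|M_{\al,r}\| \le \sup_{0\le t\le 1} r^{\tilde\al(t)}
   = r^{\inf_{0\le t\le 1}\tilde\al(t)}
   = r^{\inf_{0\le s\le r}\al(s)} = r^{\al_0}\,,
$$
where the last two equalities use $\tilde\al(t)=\al(rt)$ together with the definition $\al_0=\inf_{0\le t\le r}\al(t)$. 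Combining the three displays gives $\|\Ra:L_p[0,r]\to L_p[0,r]\|\le c_p\,r^{\al_0}$, which is \eqref{Rar}.

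I do not expect a serious obstacle here; the only point that needs genuine care is the monotonicity step $\sup_t r^{\tilde\al(t)}=r^{\inf_t\tilde\al(t)}$, which truly uses $r\le 1$ (for $r>1$ the direction would reverse), combined with the crucial fact that the constant in Theorem~\ref{tb} does not depend on the exponent function. For $p=\infty$ the same argument applies verbatim, with the usual modifications in the definition of the isometry $J_\infty$ and in the norm of a multiplication operator on $L_\infty$.
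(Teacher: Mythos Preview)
Your proof is correct and follows essentially the same route as the paper: apply Proposition~\ref{scale} with $p=q$, bound $\|M_{\al,r}\circ R^{\tilde\al(\cdot)}\|$ by $\|M_{\al,r}\|\cdot\|R^{\tilde\al(\cdot)}\|$, invoke the uniform bound $c_p$ from Theorem~\ref{tb}, and use $r\le 1$ to evaluate $\sup_t r^{\tilde\al(t)}=r^{\al_0}$. Your write-up is slightly more explicit (observing the equality of norms via the isometry $J_p$ and isolating the monotonicity step), but there is no substantive difference.
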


\begin{proof}
By Proposition \ref{scale} and Theorem \ref{tb} it follows that
\beaa
   \|\Ra : L_p[0,r]\to L_p[0,r]\|
   &\le& \|M_{\al,r}: L_p[0,1]\to L_p[0,1]\|\cdot\|R^{\tilde\al(\cdot)}:L_p[0,1]\to L_p[0,1]\|
   \\
   &\le& \sup_{0\le t\le 1} r^{\al(t)}\,c_p = c_p\,r^{\al_0}
\eeaa
because of $0<r\le 1$. This completes the proof.
\end{proof}

\begin{remark}
The preceding result can be easily extended to arbitrary intervals of length
less than one. More precisely, given real numbers $a<b\le a+1$ and a function
$\al(\cdot)$ on $[a,b]$, a.e.~positive, for any $p>1$ it follows
\be \label{sc2}
   \|\Ra : L_p[a,b]\to L_p[a,b]\|\le c_p\,(b-a)^{\al_0}
\ee
with $\al_0=\inf_{a\le t\le b}\al(t)$. Note that $\Ra$ acts on $L_p[a,b]$ as
$$
(\Ra f)(t) =\frac{1}{\Gamma(\al(t))}\int_a^t (t-s)^{\al(t)-1} f(s)\,\d s\,,\quad a\le t\le b\,.
$$
\end{remark}

\section{Compactness properties of $\Ra$}
\label{CpR}
\setcounter{equation}{0}

In Section \ref{Bound} we investigated the the question whether or not $\Ra$ defines a bounded operator from $L_p[0,1]$
into $L_q[0,1]$. But it is not clear at all whether this operator
is even compact, i.e., whether it maps bounded sets in $L_p[0,1]$ into relatively compact subsets
of $L_q[0,1]$. Recall that in the classical case this is so for $1\le p,q\le \infty$ provided that $\al>(1/p-1/q)_+$ (cf.~Proposition \ref{enr} below).
\medskip

We start with an easy observation about the compactness of $\Ra$ for non--constant $\al(\cdot)$.

\begin{prop} \label{comp}
Suppose $\al_0:=\inf_{t\in[0,1]}\al(t)>(1/p-1/q)_+$. Then for all $1\le p,q\le \infty$ the operator $\Ra$
is compact from $L_p[0,1]$ into $L_q[0,1]$.
\end{prop}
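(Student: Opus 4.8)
The plan is to factor $\Ra$ as a composition of a \emph{classical} Riemann--Liouville operator, which is already known to be compact, with a variable--order operator that is merely bounded. Compactness of the product then follows from the standard fact that the composition of a compact operator with a bounded one (on either side) is again compact. The hypothesis $\al_0=\inf_t\al(t)>(1/p-1/q)_+$ provides exactly the room needed to split off such a classical factor.

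Concretely, I would fix a number $\beta$ with $(1/p-1/q)_+<\beta<\al_0$ and set $\gamma(\cdot):=\al(\cdot)-\beta$. Then $\gamma(t)\ge\al_0-\beta>0$ for a.e.~$t$, so the weak semi--group identity of Proposition \ref{sg}, applied to the a.e.~positive function $\gamma(\cdot)$ and the constant $\beta>0$, yields
$$
   \Ra = R^{\gamma(\cdot)+\beta}=R^{\gamma(\cdot)}\circ R^\beta .
$$
Note that the order here is forced: only the \emph{inner} classical factor may be split off, because $R^\beta\circ\Ra=R^{\al(\cdot)+\beta}$ fails in general (see the Remark following Proposition \ref{sg}). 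If $q>1$ (the case $q=\infty$ included) I would run the factorization through the intermediate space $L_q[0,1]$: the classical operator $R^\beta:L_p[0,1]\to L_q[0,1]$ is compact since $\beta>(1/p-1/q)_+$ (this is the classical fact recalled just before the statement, cf.~Proposition \ref{enr}), whereas $R^{\gamma(\cdot)}:L_q[0,1]\to L_q[0,1]$ is bounded by Theorem \ref{tb} because $q>1$. Hence $\Ra=R^{\gamma(\cdot)}\circ R^\beta$ is compact from $L_p[0,1]$ into $L_q[0,1]$.

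The one case requiring extra care, and the main obstacle, is $q=1$, precisely because Theorem \ref{tb} does not deliver boundedness of the variable--order factor in $L_1[0,1]$. Here $(1/p-1/q)_+=0$, so the hypothesis reduces to $\al_0>0$. I would instead route through an auxiliary space $L_u[0,1]$ with $u>1$ chosen so close to $1$ that $(1/p-1/u)_+<\al_0$; this is possible since $(1/p-1/u)_+\to(1/p-1)_+=0$ as $u\to 1^+$. Picking $\beta\in\big((1/p-1/u)_+,\al_0\big)$, the operator $R^\beta:L_p[0,1]\to L_u[0,1]$ is compact, $R^{\gamma(\cdot)}:L_u[0,1]\to L_u[0,1]$ is bounded (as $u>1$), and the embedding $L_u[0,1]\hookrightarrow L_1[0,1]$ is bounded on the finite measure space $[0,1]$. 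Composing the three maps exhibits $\Ra:L_p[0,1]\to L_1[0,1]$ as a compact operator.

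Throughout, the operator identities are legitimate because every factor is well defined and bounded between the indicated spaces, all the functions involved lying in $L_1[0,1]$, where Proposition \ref{sg} applies. I expect the only genuinely delicate points to be the correct (forced) ordering in the factorization and the endpoint management at $q=1$ via the auxiliary exponent $u$; everything else is a routine invocation of Theorem \ref{tb}, Proposition \ref{sg}, and the classical compactness of $R^\beta$.
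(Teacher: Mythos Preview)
Your proof is correct and follows the same strategy as the paper: factor $\Ra=R^{\al(\cdot)-\beta}\circ R^\beta$ via Proposition~\ref{sg} with $(1/p-1/q)_+<\beta<\al_0$, use compactness of the classical $R^\beta:L_p[0,1]\to L_q[0,1]$, and boundedness of the variable--order factor on $L_q[0,1]$. The only point of departure is the endpoint $q=1$. There the paper stays in $L_1$ and observes that, since $\inf_t(\al(t)-\beta)=\al_0-\beta>0$, Proposition~\ref{bL1} (cf.\ the Remark following it) already gives $\|R^{\al(\cdot)-\beta}:L_1\to L_1\|<\infty$, so no detour is needed. Your alternative of routing through an auxiliary $L_u[0,1]$ with $u>1$ is equally valid and has the minor advantage of invoking only Theorem~\ref{tb}; the paper's argument is slightly more direct but requires the $L_1$--boundedness criterion.
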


\begin{proof}
Choose a number $\beta$ with $(1/p-1/q)_+<\beta<\al_0$. By Proposition \ref{sg} we may represent $\Ra$ as
\be \label{pro}
   \Ra = R^{\al(\cdot)-\beta}\circ R^\beta\
\ee
where $R^\beta$ maps $L_p[0,1]$ into $L_q[0,1]$ and  $R^{\al(\cdot)-\beta}$ acts in $L_q[0,1]$.
Now $R^\beta$ is compact and $R^{\al(\cdot)-\beta}$ is bounded. The latter is a consequence of
$\inf_{0\le t\le 1}[\al(t)-\beta]>0$. Indeed, if $q>1$, the boundedness of $R^{\al(\cdot)-\beta}$
follows by Theorem \ref{tb}. For $q=1$ we may apply  Proposition \ref{bL1} to $\al(\cdot)-\beta$.
Using \eqref{pro} and the ideal property of the class of compact operators, $\Ra$ is compact as
well.
\end{proof}

In particular, the preceding proposition tells us that $\Ra$ is a compact operator in $L_p[0,1]$ provided that $\al(\cdot)$ is well separated from zero. On the other hand, as is well-known (cf.~\cite{SKM}, Theorems 2.6 and 2.7), whenever $f\in L_p[0,1]$,
then it follows that
$$
\lim_{\al\to 0}\|R^\al f -f\|_p=0
$$
as well as
$$
\lim_{\al\to 0} (R^\al f)(t)= f(t)
$$
for almost all $t\in[0,1]$.
In different words, for small $\al>0$ the operator $R^\al$ is close to the non-compact identity operator in $L_p[0,1]$.
Thus it is not clear at all whether $\Ra$ is compact when we drop the assumption
$\inf_{t\in[0,1]}\al(t)>0$.
The aim of this section is to show that $\Ra$ is compact if $\al(\cdot)$ approaches zero
very slowly while it is non-compact if $\al(t)$ is already quite close to zero in a neighborhood
of a critical point, i.e., near to a point where $\al(\cdot)$ approaches zero.
\medskip

We will investigate the two following cases separately:
\bee
\item
It holds $\inf_{\eps\le t\le 1}\al(t)>0$ for each $\eps>0$, i.e., the critical point of
$\al$ is $t=0$.
\item
The critical point of $\al$ is $t=1$, i.e., we have $\inf_{0\le t\le 1-\eps}\al(t)>0$ for
each $\eps>0$.
\eee
We treat both cases in similar way, yet with slightly different methods.

\subsection{The critical point $t=0$}

We begin with a preliminary result which is interesting in its own right.
\begin{prop}
\label{p1}
If $1<p\le\infty$, then there is a constant $c>0$ only depending on $p$ such that  for
each $0<r\le 1$ and each measurable non-negative $\al(\cdot)$ on $[0,r]$ it
follows that
\be  \label{Rar1}
  \|\Ra : L_p[0,r]\to L_p[0,r]\|\le c\,\sup_{0< t\le r} (2 t)^{\al(t)}\,.
\ee
\end{prop}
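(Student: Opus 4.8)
The plan is to reduce the estimate on $[0,r]$ to the already-established uniform bound of Theorem~\ref{tb}, but applied locally on dyadic subintervals where the factor $(2t)^{\al(t)}$ can be frozen to a constant. First I would decompose the interval $[0,r]$ into the dyadic pieces $I_k := (r\,2^{-k-1}, r\,2^{-k}]$ for $k\ge 0$. On each $I_k$ the variable $t$ ranges over a factor-$2$ window, so $t \approx r\,2^{-k}$ there, and the quantity $\sup_{t\in I_k}(2t)^{\al(t)}$ is comparable to the global supremum $S:=\sup_{0<t\le r}(2t)^{\al(t)}$ appearing on the right-hand side of \eqref{Rar1}.

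The key structural observation is that $\Ra$ is causal: the value $(\Ra f)(t)$ for $t\in I_k$ depends only on $f$ restricted to $[0,t]\subseteq[0,r\,2^{-k}]$. I would therefore split the contribution to $(\Ra f)(t)$, for $t\in I_k$, according to whether $s$ lies in the ``diagonal block'' $I_k\cup I_{k+1}$ (comparable scale) or in the earlier blocks $I_j$ with $j>k+1$ (much smaller scale). The diagonal part is an operator acting essentially within an interval of length $\approx r\,2^{-k}$; here I would invoke Corollary~\ref{sc1} (or its interval version \eqref{sc2}), which gives a bound of order $c_p\,(r\,2^{-k})^{\al_0^{(k)}}$ where $\al_0^{(k)}=\inf_{I_k}\al$. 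Since on $I_k$ we have $r\,2^{-k}\approx 2t$, this is controlled by $c_p\,\sup_{t\in I_k}(2t)^{\al(t)}\le c_p\,S$, and crucially the bound is uniform across all scales $k$.

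For the off-diagonal part, where $s$ comes from strictly smaller scales, I would exploit that the kernel $(t-s)^{\al(t)-1}/\Gamma(\al(t))$ is smoothing across scales: for $s\in I_j$ with $j>k+1$ and $t\in I_k$ we have $t-s\approx t\approx r\,2^{-k}$, so the kernel is essentially bounded and the interaction between well-separated dyadic blocks decays geometrically. This should let me assemble the full operator as a sum of a diagonal part, bounded by $c_p\,S$ via the argument above, plus an off-diagonal remainder governed by a geometrically summable Schur-type estimate on the block-to-block norms $\|\,\ind_{I_k}\Ra\,\ind_{I_j}\|$. Summing the geometric series yields a total bound of the form $c\,S$ with $c$ depending only on $p$.

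The main obstacle I anticipate is making the off-diagonal/Schur estimate genuinely uniform in the unknown function $\al(\cdot)$, because the exponent $\al(t)-1$ can be close to $-1$ (when $\al(t)$ is small) and the constant $c_p$ from Theorem~\ref{tb} must not be allowed to degenerate. The delicate point is that the smallness of $\al$ is exactly what makes the factor $(2t)^{\al(t)}$ close to $1$ rather than small, so one must verify that the two effects balance and that freezing $\al$ on each dyadic window introduces only a bounded multiplicative distortion (via the comparison $\Gamma(\al)\ge K_0$ and $t^{\al(t)}\approx (r2^{-k})^{\al(t)}$ on $I_k$). I expect the correct bookkeeping to be the comparison $\sup_{t\in I_k}(2t)^{\al(t)}$ versus $\inf_{t\in I_k}(2t)^{\al(t)}$, which is bounded by a universal constant precisely because the scale varies by at most a factor $2$ across each block; ensuring this ratio is controlled independently of $\al(\cdot)$ will be the crux.
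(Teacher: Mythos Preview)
Your approach is essentially identical to the paper's: a dyadic decomposition of $[0,r]$, with the near-diagonal blocks ($m=0,1$) handled via the scaling bound \eqref{sc2} and the far-off-diagonal blocks ($m\ge 2$) via a direct kernel estimate yielding geometric decay $2^{-m/p'}$ (this is where $p>1$ enters). The final bookkeeping you flag as the crux is resolved not by controlling the ratio of $\sup$ to $\inf$ of $(2t)^{\al(t)}$ over a block (which need \emph{not} be bounded, since $\al$ may oscillate wildly within $I_k$), but simply by choosing on each block $I_k$ a point $t_k$ where $\al(t_k)$ nearly attains the infimum $a_k$, so that $(r2^{-k})^{a_k}\le 2\,(r2^{-k})^{\al(t_k)}\le 2\,(2t_k)^{\al(t_k)}\le 2S$.
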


\begin{proof}
Let us start with the case $p=\infty$. Here we have
\beaa
\|\Ra f\|_\infty&\le& \|f\|_\infty\,\sup_{0\le t\le r}\frac{1}{\Gamma(\al(t))}\,\int_0^t (t-s)^{\al(t)-1}\d s\\
&\le&
\frac{\|f\|_\infty}{K_0}\sup_{0\le t \le r} t^{\al(t)}
\eeaa
which proves \eqref{Rar1} in that case.
\medskip

Suppose now $1<p<\infty$.
In a first step we assume that $r=2^{-N}$ for some integer $N\ge 0$.
Define intervals $I_n\subseteq[0,1]$ by
$$
   I_n :=\left[2^{-(n+1)},2^{-n}\right]\,,\quad n=0,1,\ldots
$$
and denote by $P_n$ the projections onto $L_p(I_n)$, i.e., we have
$P_n f =f\cdot\ind_{I_n}$. Then $\Ra$ on $L_p[0,2^{-N}]$ admits the representation
\begin{equation} \label{a0}
   \Ra =\sum_{m=0}^\infty \left[\sum_{n=N}^\infty P_n\circ \Ra\circ P_{n+m}\right]
   :=\sum_{m=0}^\infty R_m^{\al(\cdot)}
\end{equation}
where the operators $R_m^{\al(\cdot)}$ are those in the brackets.
In particular, if $m\ge 1$, then for $t\in I_n$ we have
$$
    (R_m^{\al(\cdot)} f)(t)
    =\frac{1}{\Gamma(\al(t))}\int_{I_{n+m}}(t-s)^{\al(t)-1} f(s)\, \d s\,.
$$
Suppose now $m\ge 2$. Then, if $t\in I_n$ and $s\in I_{n+m}$ we get
$$
    2^{-n-2}\le t-s\le 2^{-n}\,,
$$
which implies for those $t$ and $s$ that
$$
     (t-s)^{\al(t)-1}\le 4\cdot 2^{-n(\al(t)-1)} \le 4\cdot 2^{-n(a_n-1)}
$$
where we set $a_n:=\inf_{t\in I_n}\al(t)$.

Consequently, if $t\in I_n$, then by H\"older's inequality we conclude that
\begin{eqnarray} \label{a1}
\nonumber
   |(R_m^{\al(\cdot)} f)(t)|&\le& \frac{4}{K_0} \,2^{-n(a_n-1)}\,\int_{I_{n+m}}|f(s)|\d s
   \\
   &\le& \nonumber
   \frac{4\cdot2^{-1/p'}}{K_0} \,2^{-n(a_n-1)}\,2^{-(n+m)/p'}
   \left(\int_{I_{n+m}}|f(s)|^p\d s\right)^{1/p}
   \\
   &=& c_1\,2^{-n(a_n-1)}\,2^{-(n+m)/p'}\left(\int_{I_{n+m}}|f(s)|^p\d s\right)^{1/p}
\end{eqnarray}
with $c_1:=(2^{-1/p'}\,4)/K_0$.
As usual, $p'$ is defined by $\frac 1 p + \frac 1 {p'}=1$ and
$K_0$ is as in \eqref{defK}. Setting $c_2:=c_1\,2^{-1/p}= 2/K_0$,
estimate (\ref{a1}) yields
\begin{eqnarray*}
    \int_{I_n}|(R_m^{\al(\cdot)} f)(t)|^p\d t &\le& c_1^p\, 2^{-n-1}\,2^{-np(a_n-1)}\,
    2^{-n p/p'}\,2^{-m p/p'} \int_{I_{n+m}}|f(s)|^p\,\d s
    \\
    &=& c_2^p\,2^{- n p\, a_n}\, 2^{-m p/p'} \int_{I_{n+m}}|f(s)|^p\,\d s
\end{eqnarray*}
where we used $ -n +n p - n p/p'=0$. Summing up, for any $f\in L_p[0,2^{-N}]$ holds
\begin{eqnarray*}
    \int_0^{2^{-N}}|(R_m^{\al(\cdot)} f)(t)|^p\d t
    &\le& c_2^p\,2^{-m p/p'}\,\sum_{n\ge N}\,2^{- n p\, a_n}\,
    \int_{I_{n+m}}|f(s)|^p\,\d s
    \\
    &\le& c_2^p\, 2^{-m p/p'}\,\sup_{n\ge N} 2^{- n p\, a_n}\,
    \sum_{n\ge N}\int_{I_{n+m}}|f(s)|^p\,\d s
    \\
    &\le& c_2^p\, 2^{-m p/p'}\,\sup_{n\ge N} 2^{- n p\, a_n}\,\|f\|_p^p\;.
\end{eqnarray*}
In different words, for any $m\ge 2$ we have
\begin{equation} \label{b1}
    \|R_m^{\al(\cdot)}\|\le c_2\, 2^{-m /p'}\,\sup_{n\ge N} 2^{- n  a_n}\,.
\end{equation}
Because of $p>1$, hence $p'<\infty$,  estimate (\ref{b1}) implies
\begin{equation} \label{a2}
    \sum_{m=2}^\infty \|R_m^{\al(\cdot)}\|
    \le c_3 \left[\sup_{n\ge N} 2^{-na_n}\right]
\end{equation}
where $c_3 := c_2\,\sum_{m=2}^\infty 2^{-m/p'}$ .
\bigskip

In view of (\ref{a0}) it remains to estimate $\|R_0^{\al(\cdot)} + R_1^{\al(\cdot)}\|$
suitably. Note that
$$
   R_0^{\al(\cdot)} +R_1^{\al(\cdot)}
   = \sum_{n=N}^\infty P_n\circ \Ra\circ(P_n+P_{n+1})\,,
$$
hence we get
\begin{eqnarray*}
   \|(R_0^{\al(\cdot)} + R_1^{\al(\cdot)}) f\|_p^p
   &=& \sum_{n=N}^\infty \int_{I_n}|\Ra(P_n+P_{n+1}) f(t)|^p\,\d t
   \\
   &=&\sum_{n=N}^\infty \int_{I_n}|R^{\al_n(\cdot)}(P_n+P_{n+1}) f(t)|^p\,\d t
\end{eqnarray*}
where $\al_n(\cdot)$ is the function with
$$
  \al_n(t):=\left\{
     \begin{array}{ccc}
      \al(t) &:& t\in I_n, \\
      a_n    &:& t\notin I_n.
     \end{array}
  \right.
$$
Thus
\begin{eqnarray} \label{a3}
  \nonumber
   \|(R_0^{\al(\cdot)} + R_1^{\al(\cdot)}) f\|_p^p
   &\le&
   \nonumber
   \sum_{n=N}^\infty \|R^{\al_n(\cdot)} : L_p(I_n\cup I_{n+1})\to  L_p(I_n\cup I_{n+1})\|^p
   \,\int_{I_n\cup I_{n+1}}|f(s)|^p\,\d s
   \\
   &\le& 2\,\sup_{n\ge N}
   \|R^{\al_n(\cdot)} : L_p(I_n\cup I_{n+1})\to  L_p(I_n\cup I_{n+1})\|^p\cdot \|f\|_p^p\,.
\end{eqnarray}
Next we want to apply \eqref{sc2} to the interval $I_n\cup I_{n+1}$ and to the operator
$R^{\al_n(\cdot)}$ . To do so we observe that $|I_n\cup I_{n+1}|=3\cdot 2^{-n-2}$ and
that by the definition of $\al_n(\cdot)$ it follows that
$\inf_{t\in I_n\cup I_{n+1}} \al_n(t) = a_n$. Hence \eqref{sc2} gives
$$
   \|R^{\al_n(\cdot)} : L_p(I_n\cup I_{n+1})\to  L_p(I_n\cup I_{n+1})\|
   \le c_p\, (3\cdot 2^{-n-2})^{a_n}
   \le c_p\, 2^{-n\,a_n}\,.
$$
Plugging this into \eqref{a3} implies
\begin{equation} \label{a4}
    \|R_0^{\al(\cdot)}+R_1^{\al(\cdot)}\|\le c_4\sup_{n\ge N} 2^{-n a_n}
\end{equation}
with $c_4 := 2^{1/p}\,c_p$. Combining (\ref{a4}) with (\ref{a2}) we arrive at
\begin{equation} \label{b2}
      \|\Ra : L_p[0,2^{-N}]\to L_p[0,2^{-N}]\|
      \le \|R_0^{\al(\cdot)}+R_1^{\al(\cdot)}\|
      +\sum_{m=2}^\infty\|R_m^{\alpha(\cdot)}\|\le c_5\,\sup_{n\ge N} 2^{-n\,a_n}
\end{equation}
with $c_5:= c_3+c_4$.

In a second step we treat the general case, namely, that $0<r\le 1$ is arbitrary.
Choose a number $N\ge 0$ with $2^{-N-1}\le r\le 2^{-N}$ and extend $\al$ to
$[0,2^{-N}]$ by setting $\al(t):=\al(r)$ whenever $r\le t\le 2^{-N}$. Clearly,
by (\ref{b2}) we have
\begin{eqnarray}   \label{b3}
   \nonumber
   \|\Ra : L_p[0,r]\to L_p[0,r]\|
   &\le& \|\Ra : L_p[0,2^{-N}]\to L_p[0,2^{-N}]\|
   \\
   &\le&  c_5\,\sup_{n\ge N} 2^{-n\,a_n}
\end{eqnarray}
where as before $a_n=\inf\{\al(t) : 2^{-(n+1)}\le t\le 2^{-n}\}$ .
For each $n\ge N$ we find $t_n\ge 2^{-n-1}$ such that
$$
    2^{-n\,a_n}\le 2 \cdot 2^{-n\,\al(t_n)}\;.
$$
Note that we may always choose the $t_n$ in $[0,r]$ by the way of extending
$\al$ to $[0,2^{-N}]$. Clearly this implies
$$
    \sup_{n\ge N} 2^{-n\,a_n}
    \le 2\cdot\sup_{n\ge N}(2^{-n})^{\al(t_n)}
    \le 2\cdot \sup_{n\ge N} (2 t_n)^{\al(t_n)}
    \le 2\cdot\sup_{t\le r} (2 t)^{\al(t)}\,.
$$
The previous estimate combined with (\ref{b3}) leads finally to
$$
    \|\Ra : L_p[0,r]\to L_p[0,r]\|\le c\,\sup_{t\le r} (2 t)^{\al(t)}
$$
with $c:=2\,c_5$.
This completes the proof of the proposition.
\end{proof}

\begin{remark}
One should compare estimate \eqref{Rar1} with that given in \eqref{Rar}. Estimate
\eqref{Rar} makes only sense for $\inf_{0\le t\le r}\al(t)=\al_0>0$ which we do
not suppose in \eqref{Rar1}. On the other hand, if $\al_0>0$, then \eqref{Rar1}
implies \eqref{Rar}, but only for $0<r\le 1/2$ .
\end{remark}
\bigskip

We are now ready to state the main result of this section.

\begin{thm} \label{t1}
   Let $\al$ be a measurable function on $(0,1]$ with
   $\inf_{\varepsilon\le t\le 1}\al(t)>0$ for each $\varepsilon>0$.
   Suppose $1<p\le \infty$. If
\begin{equation} \label{cond1}
   \lim_{t\to 0} t^{\al(t)}=0\,,
\end{equation}
then $\Ra$ is a compact operator in $L_p[0,1]$.
Conversely, if  we have
\begin{equation} \label{cond2}
   \liminf_{t\to 0} t^{\al(t)}>0\,,
\end{equation}
then $\Ra$ is non-compact.
\end{thm}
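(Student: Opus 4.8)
The plan is to prove the two implications by complementary methods: for the sufficiency of \eqref{cond1} I would exhibit $\Ra$ as a uniform limit of compact operators, while for the necessity direction I would produce a bounded sequence whose image stays away from zero.

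For the compactness claim, fix $\eps>0$ and let $P_\eps$ denote multiplication by $\ind_{[0,\eps]}$ and $Q_\eps:=I-P_\eps$. Since $\Ra$ is a Volterra (causal) operator one has $P_\eps\Ra Q_\eps=0$, so that $\Ra=P_\eps\Ra P_\eps+Q_\eps\Ra$. First I would treat $Q_\eps\Ra$: introduce the auxiliary exponent $\al^\eps(\cdot)$ that coincides with $\al$ on $[\eps,1]$ and equals the positive constant $\inf_{\eps\le t\le 1}\al(t)$ on $[0,\eps)$. Because the kernel at a point $t$ depends only on $\al(t)$, one checks $Q_\eps\Ra=Q_\eps R^{\al^\eps(\cdot)}$; as $\inf_{[0,1]}\al^\eps>0=(1/p-1/q)_+$ (here $q=p$), Proposition \ref{comp} makes $R^{\al^\eps(\cdot)}$ compact, hence so is $Q_\eps\Ra$. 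Next, the same causal structure shows that $P_\eps\Ra P_\eps$ is exactly $\Ra$ acting on $L_p[0,\eps]$, so Proposition \ref{p1} gives $\|P_\eps\Ra P_\eps\|\le c\,\sup_{0<t\le\eps}(2t)^{\al(t)}$. Here I would use that for $t<1/4$ one has $\ln(2t)\le\tfrac12\ln t$, whence $(2t)^{\al(t)}\le (t^{\al(t)})^{1/2}$; combined with \eqref{cond1} this forces $\sup_{0<t\le\eps}(2t)^{\al(t)}\to0$ as $\eps\to0$. Thus $\|\Ra-Q_\eps\Ra\|\to0$ with each $Q_\eps\Ra$ compact, so $\Ra$ is compact.

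For the converse under \eqref{cond2}, pick $\delta>0$ and $t_0>0$ with $t^{\al(t)}\ge\delta$ for $0<t\le t_0$; note this also forces $\al(t)\to0$ as $t\to0$, so $\Gamma(\al(t)+1)\le 1$ for $t$ small. Set $g_r:=r^{-1/p}\ind_{[0,r]}$, a unit vector in $L_p$. For $t\le r\le t_0$ a direct computation gives $(\Ra g_r)(t)=r^{-1/p}\,t^{\al(t)}/\Gamma(\al(t)+1)\ge r^{-1/p}\delta$, whence $\|\Ra g_r\|_p\ge\delta$. When $1<p<\infty$ the family $g_r$ converges weakly to $0$ as $r\to0$ (by H\"older, $\langle g_r,h\rangle\to0$ for every $h\in L_{p'}$), so if $\Ra$ were compact it would map $g_{r_n}$ to a norm--null sequence, contradicting $\|\Ra g_{r_n}\|_p\ge\delta$.

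The case $p=\infty$ I would settle directly: with $r_n=2^{-n}\le t_0$ and $g_{r_n}=\ind_{[0,r_n]}$, evaluating $\Ra(g_{r_n}-g_{r_m})$ at $t=2^{-n}$ for $m>n$ yields the value $(2^{-n}-2^{-m})^{\al(2^{-n})}/\Gamma(\al(2^{-n})+1)\ge\delta/2$, so all pairwise distances of the images exceed $\delta/2$ and no subsequence is Cauchy. I expect the main obstacle to be the sufficiency direction --- specifically, organizing the splitting so that Propositions \ref{comp} and \ref{p1} apply cleanly and verifying that \eqref{cond1} upgrades to the decay of $\sup_{0<t\le\eps}(2t)^{\al(t)}$, since the possible unboundedness of $\al$ near $0$ must be controlled.
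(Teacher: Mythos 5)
Your proposal is correct. The sufficiency half coincides with the paper's own argument: the same causal splitting $\Ra=P_{[0,r]}\Ra P_{[0,r]}+P_{[r,1]}\Ra$, the same modification of $\al(\cdot)$ below the cut so that Proposition \ref{comp} handles the tail, Proposition \ref{p1} for the head, and the same observation that $(2t)^{\al(t)}\le\bigl(t^{\al(t)}\bigr)^{1/2}$ for $t\le 1/4$ converts \eqref{cond1} into decay of the operator norm of the head. The non-compactness half, however, takes a genuinely different route. The paper tests $\Ra$ on the disjointly supported unit vectors $h_n=2^{(n+1)/p}\ind_{I_n}$, $I_n=[2^{-(n+1)},2^{-n}]$, proves $\liminf_n\|(\Ra h_n)\ind_{I_n}\|_p>0$, and obtains pairwise separation of the images because $\Ra h_m$ vanishes on $I_n$ for $m<n$; this works uniformly for all $1<p\le\infty$ but requires the intermediate step of translating \eqref{cond2} into the dyadic condition $\inf_n 2^{-n b_n}>0$ with $b_n=\sup_{I_n}\al$. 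You instead use the nested indicators $g_r=r^{-1/p}\ind_{[0,r]}$, for which the explicit identity $(\Ra g_r)(t)=r^{-1/p}\,t^{\al(t)}/\Gamma(\al(t)+1)$ on $t\le r$ yields the lower bound straight from \eqref{cond2} (using, correctly, that \eqref{cond2} forces $\al(t)\to0$, hence $\Gamma(\al(t)+1)\le 1$ for small $t$), and you then invoke complete continuity of compact operators on the reflexive space $L_p$, $1<p<\infty$. This buys a shorter computation and avoids the $b_n$ detour, at the price of a case split, since for $p=\infty$ you must fall back on a separation argument anyway. The one point to tighten there is that evaluating at the single point $t=2^{-n}$ is not legitimate for an essential supremum when $\al(\cdot)$ is merely measurable; but the same computation gives $(\Ra(g_{r_n}-g_{r_m}))(t)=(t-2^{-m})^{\al(t)}/\Gamma(\al(t)+1)\ge \delta\,(1/4)^{\al(t)}\ge \delta/2$ for almost every $t\in[2^{-n-1},2^{-n}]$ once $m\ge n+2$ and $n$ is large, so the bound holds on a set of positive measure and the argument closes.
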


Before proving Theorem \ref{t1}, let us rewrite it slightly. To this
end, define the function $\vp$ by
\be \label{al}
    \vp(t):=\al(t)\cdot|\ln t|\,,\quad\mbox{i.e.}\quad \al(t)
    =\frac{\vp(t)}{|\ln t|}\,,\quad 0<t\le\eps\,,
\ee
for a sufficiently small $\eps>0$.
Then the following holds.

\begin{thm}
If $\al$ is as in Theorem $\ref{t1}$, then with $\vp$ defined by $(\ref{al})$
the following implications are valid.
\beaa
   \lim_{t\to 0}\vp(t)=\infty\quad &\Longrightarrow&\quad \Ra\;
   \mbox{is a compact operator in}\; L_p[0,1]\,.
   \\
   \limsup_{t\to 0}\vp(t)<\infty\quad &\Longrightarrow&\quad \Ra\;
   \mbox{is a non-compact operator in}\; L_p[0,1]\,.
\eeaa
\end{thm}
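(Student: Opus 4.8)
The final statement is a corollary of Theorem \ref{t1}, obtained by the substitution $\al(t)=\vp(t)/|\ln t|$ near zero. The whole plan is to translate the two hypotheses of the final theorem into the two hypotheses of Theorem \ref{t1} and then invoke it directly. The crucial observation is that with this substitution one has, for $0<t\le\eps$,
\[
  t^{\al(t)}=\exp\bigl(\al(t)\ln t\bigr)
  =\exp\Bigl(-\frac{\vp(t)}{|\ln t|}\,|\ln t|\Bigr)
  =\ex^{-\vp(t)}\,,
\]
using $\ln t=-|\ln t|$ for $t\in(0,1)$. Thus $t^{\al(t)}$ and $\ex^{-\vp(t)}$ are literally equal on a punctured neighborhood of zero, which makes the correspondence between the conditions completely transparent.

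First I would record the identity $t^{\al(t)}=\ex^{-\vp(t)}$ displayed above, valid for $0<t\le\eps$. Then the first implication follows immediately: if $\lim_{t\to 0}\vp(t)=\infty$, then $\lim_{t\to 0}\ex^{-\vp(t)}=0$, hence $\lim_{t\to 0}t^{\al(t)}=0$, which is exactly condition \eqref{cond1}. By the first part of Theorem \ref{t1}, $\Ra$ is a compact operator in $L_p[0,1]$. For the second implication, suppose $\limsup_{t\to 0}\vp(t)<\infty$, say $\vp(t)\le M$ for all sufficiently small $t>0$. Then $\ex^{-\vp(t)}\ge\ex^{-M}>0$ for those $t$, whence
\[
  \liminf_{t\to 0}t^{\al(t)}=\liminf_{t\to 0}\ex^{-\vp(t)}\ge\ex^{-M}>0\,,
\]
which is precisely condition \eqref{cond2}. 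By the second part of Theorem \ref{t1}, $\Ra$ is non-compact. This completes the argument.

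There is essentially no obstacle here beyond bookkeeping, since the substitution is an exact identity rather than an asymptotic estimate. The one point requiring a word of care is that the function $\vp$ is only defined on $(0,\eps]$ by \eqref{al}, while Theorem \ref{t1} and the compactness question concern $\al$ on all of $(0,1]$; but this is harmless, because compactness of $\Ra$ is governed solely by the behavior of $\al(\cdot)$ near the critical point $t=0$ (the hypothesis $\inf_{\eps\le t\le 1}\al(t)>0$ is shared by both theorems and handles the region away from zero). Hence specifying $\vp$ near zero determines $\al$ near zero, and both limit conditions \eqref{cond1}, \eqref{cond2} are conditions as $t\to 0$, so only the values of $\vp$ on $(0,\eps]$ matter. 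The single genuine step to get right is the sign in $\ln t=-|\ln t|$, which is what produces the clean equality $t^{\al(t)}=\ex^{-\vp(t)}$ and thereby matches $\vp\to\infty$ with $t^{\al(t)}\to 0$ and $\limsup\vp<\infty$ with $\liminf t^{\al(t)}>0$.
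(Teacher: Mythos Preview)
Your proposal is correct and is precisely the intended argument: the paper presents this theorem as a direct reformulation of Theorem \ref{t1} via the identity $t^{\al(t)}=\ex^{-\vp(t)}$ and does not give a separate proof. Your translation of the two limit conditions is exactly the point.
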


\noindent
\textbf{Proof of Theorem \ref{t1} :}
Let us first assume that condition (\ref{cond1}) is satisfied. Fix $r>0$ and
split $\Ra$ as
$$
   \Ra = P_{[0,r]}\circ \Ra + P_{[r,1]}\circ \Ra\,.
$$
Here
$$
   P_{[0,r]} f := f\cdot \ind_{[0,r]}\quad\mbox{while}
   \quad P_{[r,1]} f := f\cdot \ind_{[r,1]}\,.
$$
Note that $P_{[r,1]}\circ \Ra$ is compact from $L_p[0,1]$ into $L_p[0,1]$.
Indeed, if we define $\tilde\al$ by
$$
  \tilde\al (t):=
  \left\{
     \begin{array}{ccc}
        \al(r)&:& 0\le t\le r\\
        \al(t)&:& r\le t\le 1
     \end{array}
  \right.
$$
then it follows that
$$
    P_{[1,r]}\circ R^{\tilde\al(\cdot)} = P_{[1,r]}\circ \Ra\,.
$$
By the properties of $\al$ we have
$$
    \inf_{0\le t\le 1}\tilde\al(t)>0\,,
$$
thus Proposition \ref{comp} applies and $R^{\al_r(\cdot)}$ is compact,
hence so is $P_{[r,1]}\circ \Ra$.

Observe that $P_{[0,r]}\circ \Ra$ is nothing else as $\Ra$ regarded as operator
from $L_p[0,r]$ into $L_p[0,r]$. Consequently, Proposition \ref{p1} applies and
leads to
\be \label{b4}
    \|P_{[0,r]}\circ \Ra\|\le c\,\sup_{0\le t\le r} (2t)^{\al(t)}\,.
\ee
We claim now that (\ref{cond1}) yields $\lim_{t\to 0} (2 t)^{\al(t)}=0$.
To see this, write
$$
    (2 t)^{\al(t)}=(2\sqrt t)^{\al(t)}\left[t^{\al(t)}\right]^{1/2}
    \le \left[t^{\al(t)}\right]^{1/2}
$$
provided that $t\le 1/4$, hence
$$
    \limsup_{t\to 0} (2 t)^{\al(t)}
    \le \left[\limsup_{t\to 0} t^{\al(t)}\right]^{1/2}\,,
$$
and by (\ref{b4}) condition (\ref{cond1}) leads to
$$
     \lim_{r\to 0}  \|P_{[0,r]}\circ \Ra\| = 0\,.
$$
Thus, as $r\to 0$, the operator $\Ra$ is a limit (w.r.t.~the operator norm) of
the compact operators $P_{[r,1]}\circ \Ra$, hence it is compact as well.
This proves the first part of the theorem.
\bigskip

To verify the second part, we first prove a preliminary result.
Let as above $I_n=[2^{-(n+1)},2^{-n}]$ and set
$$
   b_n:= \sup_{t\in I_n} \al(t)\,,\quad n=0,1,\ldots
$$
We start with showing the following:
Suppose that
\begin{equation} \label{cond3}
   \inf_{n\ge 0}2^{-n\,b_n}>0\,,
\end{equation}
then $\Ra$ is non-compact. To verify this, set
$$
    h_n:= 2^{(n+1)/p}\,\ind_{I_n}\,,\quad n=0,1,2,\ldots.
$$
Then $\|h_n\|_p=1$ and for $t\in I_n$ we have
\begin{eqnarray*}
    \Ra h_n (t)&=& \frac{2^{(n+1)/p}}{\Gamma(\al(t))}\,
    \int_{2^{-n-1}}^t (t-s)^{\al(t)-1} \d s
    \\
    &\ge& c_1\,2^{n/p}\,(t-2^{-n-1})^{\al(t)}
    \ge c_1\,2^{n/p}\,(t-2^{-n-1})^{b_n}
\end{eqnarray*}
where $c_1:=\frac{2^{1/p}}{K_0}$.
From this we derive
\begin{eqnarray} \label{e1}
  \nonumber
  \|(\Ra h_n)\ind_{I_n}\|_p^p  &\ge& c_1^p\,2^n\,
  \int_{3\cdot 2^{-n-2}}^{2^{-n}} (t-2^{-n-1})^{b_n p}\,\d t
  \\
  &\ge& c_1^p\,2^n\, 2^{-n-2}\, 2^{-(n+2)\,b_n p}\,.
\end{eqnarray}
Using
$$
    2^{-(n+2)\,b_n}=\left[2^{-n\,b_n}\right]^{(n+2)/n}\,,
$$
we see that assumption (\ref{cond3}) and estimate (\ref{e1}) lead to
$$
   \liminf_{n\to\infty}\|(\Ra h_n)\ind_{I_n}\|_p >0\,.
$$
But this implies that $\Ra$ is non-compact. Indeed, if $m<n$, then
$(\Ra h_m)(t)=0$ for $t\in I_n$, hence for some $\delta>0$ we have
$$
   \|\Ra h_n -\Ra h_m\|_p \ge \|(\Ra h_n)\ind_{I_n}\|_p\ge \delta
$$
provided that $m$ is sufficiently large.
Thus there are infinitely many functions in the unit ball of $L_p[0,1]$ such
that the mutual distance between their images is larger than $\delta >0$.
Of course, an operator possessing this property cannot be compact.

To complete the proof it suffices to verify that (\ref{cond2}) implies
(\ref{cond3}). Choose $t_n\le 2^{-n}$ for which $\al(t_n)$ almost attains
$b_n$, i.e., for which
$$
   2^{-n\al(t_n)} \le 2\cdot 2^{-n\,b_n}\,,\quad n=0,1,\ldots
$$
Then we get
\be
\label{es1}
   2^{-n\,b_n}  \ge 2^{-1}\cdot 2^{-n\al(t_n)}
   \ge 2^{-1}\cdot t_n^{\al(t_n)}
   \ge 2^{-1}\cdot \inf_{0<t\le 1} t^{\al(t)}\,.
\ee
By the assumptions about $\al(\cdot)$ for any $\eps>0$ we have
$\inf_{\eps\le t\le 1} t^{\al(t)}>0$,
hence because of \eqref{es1} condition (\ref{cond2}) implies (\ref{cond3}) and this completes the proof.
\hspace*{\fill}$\square$\medskip\par
\bigskip

\begin{remark}
Note that there is only one very special case not covered by Theorem \ref{t1}.
Namely, if we have
$\lim_{t\to 0}\al(t)=0$, $\liminf_{t\to 0}\al(t)|\ln t| <\infty$ and
$\limsup_{t\to 0}\al(t)|\ln t| =\infty$.
\end{remark}

\subsection{The critical point $t=1$}

We suppose now that $\inf_{0\le t\le 1-\eps}\al(t)>0$ for each $\eps>0$.
One might expect that this case can be transformed into the first one,
i.e., in the case that the critical point is $t=0$, by an easy time inversion.
But if $S: L_p[0,1]\to L_p[0,1]$ is defined by
$$
   (S f)(t):= f(1-t)\,,\quad 0\le t\le 1\,,
$$
then we get
\be \label{S}
    (S\,\Ra S f)(t)
    = \frac{1}{\Gamma(\tilde\al(t))}\int_t^1 (s-t)^{\tilde\al(t)-1} f(s)\,\d s
\ee
where $\tilde\al(t):=\al(1-t)$.
The problem is that the right hand expression is \textit{not} $R^{\tilde\al(\cdot)} f$.
Thus, although a time inversion transforms the critical point $t=1$ of $\al$ into
the critical point $t=0$ of $\tilde\al$, it does not solve our problem because the
inversion changes the fractional integral as well. It is also noteworthy to mention
that the operator in (\ref{S}) is \textit{not} the dual operator of
$R^{\tilde\al(\cdot)}$, hence also a duality argument does not apply here.

Therefore, as far as we see, a time inversion is not useful to investigate the critical
case $t=1$, thus we are forced to adapt our former methods to the new situation.
\medskip

We start with a proposition which is the counterpart of Proposition \ref{p1} in that
case. Its proof is similar to that of Proposition \ref{p1}, yet the arguments differ
at some crucial points.

\begin{prop} \label{p2}
There is a constant $c>0$ only depending on $p>1$ such that
for any real $0<r<1/2$ it follows
$$
   \|\Ra : L_p[1-r,1]\to L_p[1-r,1]\|
   \le c\max\{\sup_{0<t\le r} (2t)^{\tilde\al(t)/2}, r^{1/2p}\}
$$
where as before $\tilde \al(t)=\al(1-t)$ for $0\le t\le 1$.
\end{prop}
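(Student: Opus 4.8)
The plan is to reflect the problem to the origin and then adapt the dyadic block decomposition from the proof of Proposition \ref{p1}; the one genuinely new difficulty is an index mismatch in the off-diagonal blocks, and this is exactly what forces the halved exponent $\tilde\al/2$ and the extra term $r^{1/2p}$.

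First I would pass from $[1-r,1]$ to $[0,r]$. Since the reflection $S$ is an isometry of $L_p[1-r,1]$ onto $L_p[0,r]$, the operator $T:=S\,\Ra\, S$ on $L_p[0,r]$ satisfies $\|\Ra:L_p[1-r,1]\to L_p[1-r,1]\|=\|T:L_p[0,r]\to L_p[0,r]\|$, and by the computation behind \eqref{S} it acts as $(Tf)(u)=\frac{1}{\Gamma(\tilde\al(u))}\int_u^r (v-u)^{\tilde\al(u)-1}f(v)\,\d v$. The critical behaviour of $\tilde\al$ now sits at $u=0$. Exactly as in Proposition \ref{p1} I would first treat $r=2^{-N}$ and reduce the general case at the end by extending $\tilde\al$ by the constant $\tilde\al(r)$ on $[r,2^{-N}]$; the norm of $T$ only grows under this domain extension, since functions on $[0,r]$ extend by zero. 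With $I_n=[2^{-(n+1)},2^{-n}]$, $P_n$ the projection onto $L_p(I_n)$ and $\tilde a_n:=\inf_{I_n}\tilde\al$, I decompose $T=\sum_{m\ge0}T_m$, $T_m:=\sum_{n\ge N}P_n\,T\,P_{n-m}$, so that $T_m$ collects the blocks whose input interval $I_{n-m}$ lies $m$ steps \emph{farther} from the origin than the output interval $I_n$. Put $\Sigma:=\sup_{n\ge N}2^{-n\tilde a_n}\le 1$.

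For $m\ge2$ the integrable singularity is absent: for $u\in I_n$, $v\in I_{n-m}$ one has $2^{-(n-m+2)}\le v-u\le 2^{-(n-m)}$, whence $(v-u)^{\tilde\al(u)-1}\le 4\cdot 2^{(n-m)(1-\tilde\al(u))}$ for every $\tilde\al(u)\ge0$. Combining this with H\"older's inequality on $I_{n-m}$ (which supplies the factor $|I_{n-m}|^{1/p'}$) and integrating over $u\in I_n$, the exponent arithmetic collapses to $\int_{I_n}|T_mf|^p\le c\,2^{-m}\,2^{-(n-m)\tilde a_n p}\int_{I_{n-m}}|f|^p$; summing over $n$ and writing $k=n-m$ gives $\|T_m\|\le c\,2^{-m/p}\sup_{k\ge N}2^{-k\tilde a_{k+m}}$. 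Here is the crucial departure from Proposition \ref{p1}: the base $2^{-k}$ carries the \emph{input} index $k$, while the exponent $\tilde a_{k+m}$ carries the \emph{output} index $k+m$, so the clean estimate $2^{-na_n}\le\Sigma$ used there is unavailable. I would resolve this by the geometric-mean bound $2^{-k\tilde a_{k+m}}=\big(2^{-(k+m)\tilde a_{k+m}}\big)^{k/(k+m)}\le \Sigma^{k/(k+m)}\le \Sigma^{N/(N+m)}$, using $\Sigma\le1$ and $k\ge N$. I expect this step to be the main obstacle.

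The estimate $\|T_m\|\le c\,2^{-m/p}\Sigma^{N/(N+m)}$ is then summed by splitting at $m=N$: for $2\le m\le N$ one has $N/(N+m)\ge 1/2$, so $\Sigma^{N/(N+m)}\le\Sigma^{1/2}$ and these terms sum to $\le c\,\Sigma^{1/2}$; for $m>N$ one uses $\Sigma^{N/(N+m)}\le 1$ and $\sum_{m>N}2^{-m/p}\le c\,2^{-N/p}\le c\,r^{1/p}\le c\,r^{1/2p}$. This split is precisely the source of the halved exponent and of the term $r^{1/2p}$. The diagonal blocks $m=0,1$, which do contain the singularity, I would handle exactly as $R_0+R_1$ in Proposition \ref{p1}: each block $P_nTP_n$, respectively $P_nTP_{n-1}$, is a co-restriction of the variable-order operator that $T$ induces on $I_n$, respectively $I_{n-1}\cup I_n$; reflecting that interval turns it into an $R^{\beta(\cdot)}$ to which the scaling estimate \eqref{sc2} applies, giving block norm $\le c_p\,|I_{n-1}\cup I_n|^{c_n}$ with $c_n=\inf\tilde\al$ over the interval. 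A short computation (again using $(n-1)/n\ge 1/2$) bounds these by $c\,\Sigma^{1/2}$, and since the blocks overlap at most twice, $\|T_0+T_1\|\le c\,\Sigma^{1/2}$. Altogether $\|T\|\le c\max\{\Sigma^{1/2},r^{1/2p}\}$, and converting $\Sigma^{1/2}=\sup_{n\ge N}2^{-n\tilde a_n/2}$ into $\sup_{0<t\le r}(2t)^{\tilde\al(t)/2}$ by choosing near-minimizers $t_n\in[0,r]$ exactly as in the final step of Proposition \ref{p1} yields the claim.
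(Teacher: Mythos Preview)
Your proof is correct and follows essentially the same route as the paper: the paper works directly on $[1-r,1]$ with the intervals $I_n=[1-2^{-n},1-2^{-(n+1)}]$ rather than reflecting to $[0,r]$ first, but the dyadic block decomposition, the kernel bounds for $m\ge 2$, and the use of \eqref{sc2} (after the $\al\to\al_n$ replacement trick) for $m=0,1$ are the same. The one genuine technical difference is in summing $\sum_{m\ge 2}\|\Ra_m\|$: the paper handles the index mismatch by a case split $a_n\le 1/(2p)$ versus $a_n>1/(2p)$, obtaining $c\,\max\{\sup_n 2^{-na_n},\,2^{-N/2p}\}$, whereas your geometric-mean bound $2^{-k\tilde a_{k+m}}=(2^{-(k+m)\tilde a_{k+m}})^{k/(k+m)}$ together with the split at $m=N$ yields $c\,\max\{\Sigma^{1/2},\,2^{-N/p}\}$; both feed into the same final estimate since the diagonal part forces $\Sigma^{1/2}$ anyway and $2^{-N/p}\le 2^{-N/2p}$.
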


\begin{proof}
The case $p=\infty$ can be treated exactly as in the proof of Proposition \ref{p1}.

Thus let us assume
$1<p<\infty$.
Again we first suppose that $r=2^{-N}$ for a certain integer $N\ge 1$
and split the interval $[1-2^{-N},1]$ by dyadic intervals $I_n$ which
are this time defined by
\be \label{In}
   I_n:=[1-2^{-n},1-2^{-(n+1)}]\,,\quad n=N,N+1,\ldots
\ee
Then $\Ra$ on $L_p[1-2^{-N},1]$ can be represented as
$$
    \Ra = \sum_{{n\ge N}\atop{k\ge N}}P_n \Ra P_k
    = \sum_{n\ge k\ge N} P_n \Ra P_k = \sum_{m=0}^\infty \Ra_m
$$
where as before $P_n f = f\circ \ind_{I_n}$ and
$$
   \Ra_m := \sum_{n=m+N}^\infty P_n \Ra P_{n-m}.
$$
In particular, if $m\ge 1$ and
$t\in I_n$ with $n\ge N+m$, then it follows that
$$
    (\Ra_m f)(t)
    = \frac{1}{\Gamma(\al(t))}\int_{I_{n-m}} (t-s)^{\al(t)-1} f(s) \d s\,.
$$
Assuming $m\ge 2$ and $n\ge N+m$ for $t\in I_n$ and $s\in I_{n-m}$ one
easily gets
$$
    2^{-n+m-2}\le t-s\le 2^{-n+m}\,,
$$
hence
\be \label{d2}
    (t-s)^{\al(t)-1}\le 4\cdot \left(2^{-n+m}\right)^{\al(t)-1}
\ee
whenever $t$ and $s$ are as before.
With $c_1:=\frac{4}{K_0}$ estimate (\ref{d2})
leads for $t\in I_n$ to
\beaa
   |\Ra_m f (t)|
   &\le& c_1\left(2^{-n+m}\right)^{\al(t)-1}\,\int_{I_{n-m}}|f(s)|\d s
   \\
   &\le& c_1\left(2^{-n+m}\right)^{\al(t)-1} |I_{n-m}|^{1/p'}
   \left(\int_{I_{n-m}}|f(s)|^p\d s\right)^{1/p}
   \\
   &\le& c_2\, (2^{-n+m})^{a_n-1} (2^{-n+m})^{1/p'}
   \left(\int_{I_{n-m}}|f(s)|^p\d s\right)^{1/p}
\eeaa
where as above $a_n:=\inf_{t\in I_n}\al(t)$ and $c_2:=2^{-1/p'}c_1$. Consequently,
whenever $n\ge N+m$ and $m\ge 2$, with $c_3:=2^{-1/p} c_2=2/K_0$ this implies
\beaa
    \int_{I_n}|\Ra_m f)(t)|^p\d t &\le& c_3^p \,2^{-n}\,
    (2^{-n+m})^{p/p'} (2^{-n+m})^{p a_n-p}\,\int_{I_{n-m}}|f(s)|^p\d s
    \\
    &=& c_3^p\, 2^{-m +m  p a_n -n p a_n}\,\int_{I_{n-m}}|f(s)|^p\d s
\eeaa
because of $-n -n p/p' +n p= 0$ and $m p/p'  - m p = -m$. Summing the last
estimate over all $n\ge N+m$ we arrive at
\be \label{d3}
   \|\Ra_m\|\le c_3\, \sup_{n\ge m+N}\,2^{-m/p}\,2^{(-n+m) a_n}\,.
\ee
If $a_n\le 1/2p$, then it follows that
$$
   2^{-m/p}\,2^{(-n+m) a_n}\le 2^{-m/2p}\, 2^{- n a_n}
$$
while for $a_n\ge 1/2p$ and $n\ge m+N$ we get
$$
2^{-m/p}\,2^{(-n+m) a_n}\le 2^{-m/p} \, 2^{-N/2p}\,.
$$
Thus (\ref{d3}) finally leads to
\be \label{d4}
     \sum_{m=2}^\infty \|\Ra_m\|
     \le c_4\, \max\{\sup_{n\ge N} 2^{- n a_n}, 2^{-N/2p}\}
\ee
with $c_4:= c_3\,\sum_{m=2}^\infty 2^{-m/2p}$.

Our next aim is to estimate $\|\Ra_0 +\Ra_1\|$ suitably. Note that
\beaa
    \Ra_0 +\Ra_1&=& P_N\,\Ra\, P_N + \sum_{n=N+1}^\infty P_n\, \Ra\, (P_n+ P_{n-1})
    \\
    &=& P_N\,\Ra\, P_N + \sum_{n=N+1}^\infty P_n\, R^{\al_n(\cdot)}\, (P_n+ P_{n-1})
\eeaa
where $\al_n(t)= \al(t)$ for $t\in I_n$ and $\al_n(t)=a_n$ whenever $t\notin I_n$.
For $f\in L_p[1-2^{-N},1]$ this implies
\bea
\label{d5} \nonumber
   \lefteqn{\|(\Ra_0+\Ra_1) f\|_p^p
   \le \|\Ra : L_p(I_N)\to L_p(I_N)\|^p \int_{I_N} |f(s)|^p\d s}
   \\
   \nonumber
   &+& \sum_{n=N+1}^\infty\|R^{\al_n(\cdot)}:L_p(I_n\cup I_{n-1})
   \to L_p(I_n\cup I_{n-1})\|^p\int_{I_n\cup I_{n-1}}|f(s)|^p\d s
   \\
   \nonumber
   &\le& \|\Ra : L_p(I_N)\to L_p(I_N)\|^p\,\|f\|_p^p\\
   &+& 2\sup_{n\ge N+1}
   \|R^{\al_n(\cdot)}:L_p(I_n\cup I_{n-1})\to L_p(I_n\cup I_{n-1})\|^p
   \|f\|_p^p\,.
\eea
Since $|I_N|= 2^{-N-1}\le 2^{-N}$ and
$|I_n\cup I_{n-1}| =  3 \cdot 2^{-n-1}\le 2^{-n+1}$, exactly as in Proposition \ref{p1} estimates \eqref{sc2} and \eqref{d5}
imply
\be
\label{es2}
\|\Ra_0+\Ra_1\|\le c_5 \Big[2^{-N a_N} + \sup_{n\ge N+1}\,\left(2^{-n+1}\right)^{a_n}\Big]
\ee
with
$c_5:=2^{1/p}\,c_p$ and $c_p$ is the constant in \eqref{sc2}.
Recall that $N\ge 1$,
hence the numbers $n$ in the supremum of the right hand side of \eqref{es2} satisfy $n\ge 2$ and we have $\frac{n-1}{n}\ge \frac 1 2$.
Thus \eqref{es2} leads to
\be \label{d6}
    \|\Ra_0+\Ra_1\|\le c_5\Big[ 2^{-N a_N} +\,\sup_{n\ge N+1}\,\left(2^{-n a_n}\right)^{(n-1)/n}\Big]
    \le 2\,c_5\,\sup_{n\ge N}\,2^{-n a_n/2}
\ee
Combining (\ref{d4}) with (\ref{d6}) gives
$$
     \|\Ra : L_p[1-2^{-N},1]\to  L_p[1-2^{-N},1]\|
     \le c_6\, \max\{\sup_{n\ge N} 2^{- n a_n/2}, 2^{-N/2p}\}\,.
$$
where $c_6:=\max\{c_4,2\,c_5\}$.

For arbitrary $r\in(0,1/2]$ choose an integer $N\ge 1$ with $2^{-N-1}\le r \le 2^{-N}$
and extend $\al$ to $[1-2^{-N},1]$ by setting $\al(t):= \al(1-r)$ whenever
$1-2^{-N}\le t< 1-r$.
Then we conclude
\bea
   \nonumber
   \label{d8}
   \|\Ra : L_p[1-r,1]\to  L_p[1-r,1]\|
   &\le& \|\Ra : L_p[1-2^{-N},1]\to  L_p[1-2^{-N},1]\|\
   \\
   &\le& c_6\, \max\{\sup_{n\ge N} 2^{- n a_n/2}, 2^{-N/2p}\}\,.
\eea
Furthermore, we choose $t_n\in[ 2^{-n-1}, 2^{-n}]$ so
that $\al(1-t_n)$ "almost" attains the infimum $a_n$ of $\al(\cdot)$ on $I_n$,
i.e., that we have
$$
    2^{-n a_n}\le 2\cdot 2^{-n \al(1-t_n)}\,.
$$
Hence,
$$
    \sup_{n\ge N} 2^{-n a_n/2}
    \le \sqrt 2\cdot\sup_{n\ge N} 2^{-n\al(1-t_n)/2}
    \le \sqrt 2\cdot \sup_{n\ge N}(2 t_n)^{ n\al(1-t_n)/2}
    \le \sup_{0<t\le r} (2t)^{n \tilde\al(t)/2}\,.
$$
and
$$
    2^{-N/2p}\le (2 r)^{1/2p}= 2^{1/2p}\cdot r^{1/2p} \,,
$$
thus (\ref{d8}) completes the proof by changing $c_6$ to $c:= 2^{1/2p}\,c_6$.
\end{proof}
\medskip

Of course, Proposition \ref{p2} may also be formulated as follows:

\begin{prop}
For each $1/2\le \theta <1$ we have
\be \label{d8a}
    \|\Ra : L_p[\theta,1]\to L_p[\theta,1]\|
    \le c\,\max\{\sup_{\theta\le t <1} (2(1-t))^{\al(t)/2}, (1-\theta)^{1/2p}\}
\ee
with $c>0$ only depending on $p>1$.
\end{prop}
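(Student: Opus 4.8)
The plan is to recognize that this proposition is nothing but Proposition~\ref{p2} rewritten through the change of variables $r=1-\theta$. Indeed, for $1/2\le\theta<1$ we have $0<1-\theta\le 1/2$, so setting $r:=1-\theta$ places us within the range of $r$ covered by Proposition~\ref{p2} (whose proof in fact establishes the estimate for all $r\in(0,1/2]$), and the interval $[\theta,1]$ coincides with $[1-r,1]$. I would therefore simply apply Proposition~\ref{p2} to this particular $r$, obtaining
\[
   \|\Ra : L_p[\theta,1]\to L_p[\theta,1]\|
   \le c\,\max\Big\{\sup_{0<t\le r}(2t)^{\tilde\al(t)/2},\,r^{1/2p}\Big\}\,.
\]

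The second step is to translate the right-hand side back into the variable of the interval $[\theta,1]$. Recall $\tilde\al(t)=\al(1-t)$, so introducing $s:=1-t$ gives $\tilde\al(t)=\al(s)$ and $2t=2(1-s)$. As $t$ ranges over $(0,r]$ the new variable $s$ ranges over $[1-r,1)=[\theta,1)$, whence
\[
   \sup_{0<t\le r}(2t)^{\tilde\al(t)/2}=\sup_{\theta\le s<1}\bigl(2(1-s)\bigr)^{\al(s)/2}\,.
\]
Together with $r^{1/2p}=(1-\theta)^{1/2p}$, this turns the bound above into exactly \eqref{d8a}, with the very same constant $c$ depending only on $p>1$.

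There is essentially no genuine obstacle here: the statement is a purely cosmetic restatement of Proposition~\ref{p2}, and the only point requiring a moment's care is the boundary case $\theta=1/2$ (that is $r=1/2$), which is admissible precisely because the argument given for Proposition~\ref{p2} is carried out for all $r\in(0,1/2]$ rather than only for strict inequality. Hence the whole proof reduces to the single substitution described above.
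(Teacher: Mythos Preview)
Your proposal is correct and matches the paper's own treatment: the paper introduces this proposition with the phrase ``Of course, Proposition~\ref{p2} may also be formulated as follows'' and gives no separate proof, precisely because the substitution $r=1-\theta$ (together with $s=1-t$ in the supremum) that you carry out is all there is to it. Your remark that the boundary case $\theta=1/2$ is admissible because the proof of Proposition~\ref{p2} actually covers $r\in(0,1/2]$ is a fair observation.
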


\begin{cor}
For  $\al(\cdot)$ on $[0,r]$ define $Q^{\al(\cdot)}$
on $L_p[0,r]$ by
$$
   (Q^{\al(\cdot)} f)(t)
   := \frac{1}{\Gamma(\al(t))}\int_t^r (s-t)^{\al(t)-1}\,f(s) \d s\,,
   \quad 0\le t\le r\,.
$$
If $0<r\le 1/2$, then it follows that
$$
    \|Q^{\al(\cdot)} : L_p[0,r]\to L_p[0,r]\|
    \le c\,\max\{\sup_{0<t\le r} (2 t)^{\al(t)/2}, r^{1/2p}\}\,.
$$
\end{cor}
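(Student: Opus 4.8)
The plan is to recognize $Q^{\al(\cdot)}$ as the Riemann--Liouville operator $\Ra$ on the shifted interval $[1-r,1]$ after an isometric reflection, and then to invoke the bound \eqref{d8a} (the reformulation of Proposition \ref{p2}). Indeed, $Q^{\al(\cdot)}$ integrates \emph{forward} (from $t$ up to $r$) against the kernel $(s-t)^{\al(t)-1}$, which is exactly the shape produced by reflecting a standard backward Riemann--Liouville operator, as was already observed in \eqref{S}. So rather than re-run the dyadic machinery, I would reduce to the estimate already obtained for $\Ra$ near the right endpoint.

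To make this precise, I would introduce the reflection $W:L_p[0,r]\to L_p[1-r,1]$ given by $(Wf)(u):=f(1-u)$ for $1-r\le u\le 1$. A reflection is measure preserving, so $W$ is an isometry (for every $1\le p\le\infty$) with inverse $(W^{-1}g)(s)=g(1-s)$, $0\le s\le r$. The key computation is to evaluate $W\,Q^{\al(\cdot)}\,W^{-1}$: applying $W^{-1}$, then $Q^{\al(\cdot)}$, then $W$, and carrying out the substitution $v=1-s$ in the resulting integral, I expect to arrive at
\[
   (W\,Q^{\al(\cdot)}\,W^{-1} g)(u)
   =\frac{1}{\Gamma(\beta(u))}\int_{1-r}^u (u-v)^{\beta(u)-1}\, g(v)\,\d v\,,
   \qquad 1-r\le u\le 1\,,
\]
where $\beta(u):=\al(1-u)$. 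In other words $W\,Q^{\al(\cdot)}\,W^{-1}=R^{\beta(\cdot)}$ acting on $L_p[1-r,1]$ in the sense of the operator described in the remark after Corollary \ref{sc1}. Since $W$ is isometric, this yields $\|Q^{\al(\cdot)}:L_p[0,r]\to L_p[0,r]\|=\|R^{\beta(\cdot)}:L_p[1-r,1]\to L_p[1-r,1]\|$.

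It then remains to apply \eqref{d8a} with $\theta=1-r$; note that $0<r\le 1/2$ is exactly equivalent to $\theta\in[1/2,1)$, the admissible range there. For the exponent function $\beta(\cdot)$ this gives a bound in terms of $\sup_{1-r\le t<1}(2(1-t))^{\beta(t)/2}$ and $(1-\theta)^{1/2p}=r^{1/2p}$; substituting $s=1-t$ and using $\beta(t)=\al(1-t)=\al(s)$ converts the first quantity into $\sup_{0<s\le r}(2s)^{\al(s)/2}$, which is precisely the asserted bound. The only delicate point is the bookkeeping of the two successive reflections: the reflection $W$ already replaces $\al$ by $\beta(\cdot)=\al(1-\,\cdot\,)$, while \eqref{d8a} is itself phrased through a further reflected exponent $\tilde\beta(t)=\beta(1-t)$, and one must verify that these compose back to $\al$ and not to some other reflected function. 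This is consistent because the kernel of $Q^{\al(\cdot)}$ evaluates its exponent at the free output point $t$, so after reflection the exponent of $R^{\beta(\cdot)}$ is likewise evaluated at its output point $u$; confirming this matching is the main (but entirely routine) obstacle.
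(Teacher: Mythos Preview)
Your proposal is correct and follows essentially the same approach as the paper. The paper's proof likewise observes that $Q^{\al(\cdot)}$ is the reflection-conjugate of a Riemann--Liouville operator on $[1-r,1]$ (writing this as $S\circ R^{\tilde\al(\cdot)}\circ S=Q^{\al(\cdot)}$, which is your $W^{-1} R^{\beta(\cdot)} W$) and then invokes Proposition~\ref{p2}; your appeal to the equivalent reformulation \eqref{d8a} and your careful handling of the double reflection are exactly what is needed.
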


\begin{proof}
The proof is an immediate consequence of Proposition \ref{p2} and representation
\eqref{S} which now may be written as
$$
    S\circ R^{\tilde\al(\cdot)} \circ S = Q^{\al(\cdot)}
$$
where $\tilde\al(t):=\al(1-t)$, while $R^{\tilde\al(\cdot)}$ is defined
on $L_p[1-r,1]$ and $Q^{\al(\cdot)}$ on $L_p[0,r]$.
\end{proof}

\bigskip

The next result is the counterpart to Theorem \ref{t1}.

\begin{thm}
\label{t2}
Let $\al$ be a measurable function on $[0,1)$ so that
$\inf_{0\le t\le \theta} \al(t)>0$ for each $\theta<1$.
Suppose $1<p\le \infty$. If
\be \label{d9}
     \lim_{t\to 1} (1-t)^{\al(t)}=0\,,
\ee
then $\Ra$ is a compact operator in $L_p[0,1]$. Conversely, if
\be \label{d10}
    \liminf_{t\to 1} (1-t)^{\al(t)}>0\,,
\ee
then $\Ra$ is non-compact.
\end{thm}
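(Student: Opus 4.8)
The plan is to follow the scheme of Theorem \ref{t1}, but with one structural change: since the critical behaviour now sits at $t=1$, I would split $\Ra$ with respect to its \emph{input} rather than its output. The point is that for $\theta<1$ the input restricted to $[\theta,1]$ only influences the output on $[\theta,1]$, so the localized piece will be exactly the restriction of $\Ra$ to $[\theta,1]$, which Proposition \ref{p2} already controls.

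For the compactness direction, assume (\ref{d9}) and fix $\theta<1$, writing $\Ra = \Ra P_{[0,\theta]} + \Ra P_{[\theta,1]}$ with $P_{[0,\theta]}f=f\cdot\ind_{[0,\theta]}$ and $P_{[\theta,1]}f=f\cdot\ind_{[\theta,1]}$. Since $(\Ra P_{[\theta,1]}f)(t)=0$ for $t<\theta$ and equals $\frac{1}{\Gamma(\al(t))}\int_\theta^t(t-s)^{\al(t)-1}f(s)\,\d s$ for $t\ge\theta$, the operator $\Ra P_{[\theta,1]}$ is just $\Ra$ acting on $L_p[\theta,1]$. Hence (\ref{d8a}) bounds its norm by $c\max\{\sup_{\theta\le t<1}(2(1-t))^{\al(t)/2},(1-\theta)^{1/2p}\}$, and the same square-root trick as in the proof of Theorem \ref{t1} turns (\ref{d9}) into $\lim_{\theta\to1}\|\Ra P_{[\theta,1]}\|=0$. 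It then remains to show that $\Ra P_{[0,\theta]}$ is compact for each fixed $\theta<1$; granting this, $\Ra$ is a norm-limit of compact operators, hence compact.

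To prove $\Ra P_{[0,\theta]}$ compact I would isolate the integrable corner singularity at $(t,s)=(\theta,\theta)$ of its kernel $K(t,s)=\frac{(t-s)^{\al(t)-1}}{\Gamma(\al(t))}\ind_{\{0\le s\le\min(t,\theta)\}}$. For small $\eta>0$ put $K_\eta:=K\cdot\ind_{\{t-s\ge\eta\}}$. On $\{t-s\ge\eta\}$ one has $\Gamma(\al(t))\ge K_0$ and $(t-s)^{\al(t)-1}\le\max\{1,\eta^{-1}\}$, so the operator $T_\eta$ with kernel $K_\eta$ has a bounded kernel on a bounded domain and is compact. The remainder $D_\eta$ is supported in $t<\theta+\eta$, where $\al(\cdot)\ge\al_0'':=\inf_{[0,\theta+\eta]}\al>0$; repeating, for a window of length $\eta$, the Hardy--Littlewood estimate from the proof of Theorem \ref{tb} gives $(D_\eta f)(t)\le c\,\eta^{\al(t)}(Mf)(t)\le c\,\eta^{\al_0''}(Mf)(t)$ for $f\ge0$, whence $\|D_\eta\|\le cA_p\,\eta^{\al_0''}\to0$. (Alternatively, $\Ra P_{[0,\theta]}=P_{[0,\theta]}\Ra P_{[0,\theta]}+P_{[\theta,1]}\Ra P_{[0,\theta]}$, with the first summand compact by Proposition \ref{comp} on $[0,\theta]$.) The step demanding care is the compactness of the bounded-kernel $T_\eta$ on the full range $1<p\le\infty$: for $1<p<\infty$ it follows from Hilbert--Schmidt on $L_2$ combined with an interpolation of compactness, while $p=\infty$ is handled along the same lines as in Propositions \ref{p1} and \ref{p2} by approximating $T_\eta$ by finite-rank operators.

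For the non-compactness direction, assume (\ref{d10}) and set $I_n:=[1-2^{-n},1-2^{-(n+1)}]$, $b_n:=\sup_{t\in I_n}\al(t)$, and $h_n:=2^{(n+1)/p}\ind_{I_n}$ with $\|h_n\|_p=1$. Choosing $t_n\in I_n$ almost attaining $b_n$ and using $1-t_n\le2^{-n}$, I would convert (\ref{d10}) into $\inf_{n\ge n_0}2^{-nb_n}>0$, exactly as (\ref{cond2}) was converted into (\ref{cond3}); a direct computation of $\Ra h_n$ on the right portion of $I_n$ then gives $\|(\Ra h_n)\ind_{I_n}\|_p\ge\delta>0$ for all large $n$. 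The genuine obstacle, and the only real departure from Theorem \ref{t1}, is that the intervals $I_n$ now lie to the \emph{left} of later ones, so for $m<n$ the function $\Ra h_m$ need not vanish on $I_n$ and the images are no longer disjointly supported. I would remove this in one of two ways: for $1<p<\infty$ simply observe that $h_n\rightharpoonup0$ weakly (as $|I_n|\to0$) while $\|\Ra h_n\|_p\ge\delta$, which contradicts compactness; for the full range $1<p\le\infty$ instead estimate the cross term and show $\|(\Ra h_m)\ind_{I_n}\|_p\to0$ as $n\to\infty$ uniformly in $m<n$, so that $\|\Ra h_n-\Ra h_m\|_p\ge\delta/2$ along a subsequence and $\{\Ra h_n\}$ is not totally bounded. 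The hardest parts overall will be the compactness of the corner operator $T_\eta$ (especially the bookkeeping at $p=\infty$) and, in the opposite direction, controlling these non-disjoint cross terms.
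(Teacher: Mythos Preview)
Your compactness argument is sound and at one point even tidier than the paper's: splitting on the \emph{input} makes $\Ra P_{[\theta,1]}$ literally equal to the operator $\Ra:L_p[\theta,1]\to L_p[\theta,1]$ that (\ref{d8a}) controls, whereas the paper's output-piece $P_{[\theta,1]}\Ra$ is not that operator and its norm does not in general tend to $0$ (try $\al\equiv1$, $f\equiv1$, $p=\infty$). Either route leaves one with the compactness of $\Ra P_{[0,\theta]}$ (equivalently, of $P_{[0,\theta]}\Ra$ together with the cross block $P_{[\theta,1]}\Ra P_{[0,\theta]}$), and your kernel-truncation plus maximal-function remainder handles this correctly for $1<p<\infty$; you are right that the bounded-kernel piece at $p=\infty$ needs a separate justification.

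Where you take an unnecessary detour is in the non-compactness part. You say that for $m<n$ the function $\Ra h_m$ need not vanish on $I_n$, and then reach for weak convergence or cross-term estimates. But you are restricting to the wrong interval. For $m<n$ the interval $I_m$ lies to the \emph{left} of the support of $h_n$, hence $(\Ra h_n)(t)=0$ for every $t\in I_m$. Restricting the difference to $I_m$ rather than to $I_n$ gives
\[
\|\Ra h_m-\Ra h_n\|_p\ \ge\ \|(\Ra h_m-\Ra h_n)\,\ind_{I_m}\|_p\ =\ \|(\Ra h_m)\,\ind_{I_m}\|_p\ \ge\ \delta
\]
for all sufficiently large $m$, by the same pointwise computation as in Theorem~\ref{t1}. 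This is precisely the ``small change'' the paper alludes to: in Theorem~\ref{t1} one restricts to $I_n$ and uses $(\Ra h_m)\,\ind_{I_n}=0$; here one restricts to $I_m$ and uses $(\Ra h_n)\,\ind_{I_m}=0$. No weak-limit argument and no uniform cross-term bound are needed, and the conclusion covers $p=\infty$ immediately.
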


\begin{proof}
For a given $1/2\le \theta<1$ we decompose $\Ra$ as
$$
\Ra = P_{[0,\theta]}\,\Ra + P_{[\theta,1]}\,\Ra\,.
$$
Now we proceed exactly as in the proof of Theorem \ref{t1}. The operator
$P_{[0,\theta]}\,\Ra$ is compact and as before assumption (\ref{d9}) implies
$$
    \lim_{\theta\to 1}
    \max\{\sup_{\theta\le t <1} (2(1-t))^{\al(t)/2}, (1-\theta)^{1/2p}\} = 0\,.
$$
In view of (\ref{d8a}) it follows
$\lim_{\theta\to 1}\|P_{[\theta,1]}\,\Ra\|=0$, hence, if $\theta\to 1$,
the operator $\Ra$ is approximated by the compact operators
$P_{[0,\theta]}\,\Ra$, consequently $\Ra$ is compact as well.

The second part of the theorem is also proved by similar methods as in
Theorem \ref{t1}, yet with a small change. With the intervals $I_n$ in
\eqref{In} we define functions $h_n$ by
$$
   h_n = 2^{(n+1)/p}\,\ind_{I_n}\,,\quad n=0,1,2,\ldots
$$
As in the proof of Theorem \ref{t1} condition (\ref{d10}) implies that
for some $\delta>0$ and $n$ sufficiently large
$\|(\Ra h_n)\ind_{I_n}\|_p\ge \delta$. If $m<n$, then this time the interval
$I_m$ is on the left hand side of $I_n$, so we get $(\Ra h_n)(t)=0$ whenever
$t\in I_m$. Hence,
$$
   \|\Ra h_m -\Ra h_n\|_p \ge \|(\Ra h_m)\ind_{I_m}\|_p \ge \delta
$$
provided that $m_0\le m<n$ for a certain $m_0\in\N$. Thus the operator
$\Ra$ is non-compact as claimed.
\end{proof}

\section{Entropy estimates for classical Riemann--Liouville operators}
\label{Eec}
\setcounter{equation}{0}

Given a compact operator $S$ between two Banach spaces $E$ and $F$,
its degree of compactness is mostly measured by the behavior of its
entropy numbers $e_n(S)$. Let us shortly recall the definition of these numbers.
\begin{defn}
Let $[E,\|\cdot\|_E]$ and $[F,\|\cdot\|_F]$ be Banach spaces with unit balls
$B_E$ and $B_F$, respectively. Given a (bounded) operator $S$ from $E$ into $F$,
its $n$-th (dyadic) entropy number $e_n(S)$ is defined by
$$
   e_n(S):=\inf\left\{\eps>0\, : \exists\, y_1,\ldots,y_{2^{n-1}}\in F\;
  \ \emph{such that}\quad S(B_E)\subseteq\bigcup_{j=1}^{2^{n-1}} (y_j+\eps B_F)\right\}\;.
$$
\end{defn}
Note that $S$ is a compact operator if and only if $\lim_{n\to\infty} e_n(S)=0$.
Furthermore, the faster $e_n(S)$ tends to zero as $n\to\infty$, the higher
(or better) is the degree of compactness of $S$. We refer to \cite{CS} or to
\cite{ET} for further properties of entropy numbers.
\medskip

Our final aim is to find suitable estimates for $e_n(\Ra)$ in dependence
of properties of the function $\al(\cdot)$.
But before we will be able to do this, we need some very precise estimates
for $e_n(R^\al)$ in the classical case.
We start with citing what is known about the entropy behavior for those
operators. For an implicit proof in the language of embeddings we refer
to \cite{ET}, 3.3.2 and 3.3.3; a rigorous one was recently given in
\cite{CHR}, Theorem 5.21. For special $p$ and $q$ the result was also
proved by other authors, for example in \cite{KL}, \cite{LL} or \cite{LS}.

\begin{prop}
\label{enr}
Suppose $1\le p,q\le \infty$ and $\al > (1/p-1/q)_+$. Then there are positive
constants $c_{\al,p,q}$ and $C_{\al,p,q}$ such that
\be
\label{enRa}
   c_{\al,p,q}\,n^{-\al}
   \le e_n(R^\al : L_p[0,1]\to L_q[0,1])
   \le C_{\al,p,q}\,n^{-\al}\,.
\ee
\end{prop}
\medskip

The main objective of this section is to improve the right hand estimate
in \eqref{enRa} as follows:

\begin{thm}
\label{te1}
Suppose $1\le p,q\le \infty$.
\bee
\item
If $1\le q\le p\le\infty$, then for each real $b>0$ there is a constant
$c_b>0$ independent of $p$ and $q$ such that for all
$n\ge 1$ and all $\al\in (0,b]$ we have
\be
   \label{enRa1}
   e_n(R^\al : L_p[0,1]\to L_q[0,1])\le c_b\,n^{-\al}\,.
\ee
\item
Suppose $1\le p<q\le \infty$. Then for all $a>\frac 1 p -\frac 1 q$ and
$b>a$ there is a constant $c_{a,b}>0$ (maybe depending on $p$ and $q$)
such that for $n\ge 1$ and $a\le \al\le b$ it follows that
\be
  \label{enRa2}
  e_n(R^\al : L_p[0,1]\to L_q[0,1])\le c_{a,b}\,n^{-\al}\,.
\ee
\eee
\end{thm}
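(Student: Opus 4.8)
The plan is to reduce everything to a single uniform estimate in one space. First I would dispose of Part 2 using the semigroup property and the multiplicativity of entropy numbers, $e_{m+n-1}(ST)\le e_m(S)\,e_n(T)$ (see \cite{CS}). Fix $\beta_1$ with $\tfrac1p-\tfrac1q<\beta_1<a$ and write, via Proposition \ref{sg}, $R^\al=R^{\al-\beta_1}\circ R^{\beta_1}$ with $R^{\beta_1}:L_p[0,1]\to L_q[0,1]$ and $R^{\al-\beta_1}:L_q[0,1]\to L_q[0,1]$. Proposition \ref{tbc} and Proposition \ref{enr} give a \emph{single} fixed constant for $e_n(R^{\beta_1}:L_p\to L_q)$, while $\al-\beta_1$ ranges over the compact interval $[a-\beta_1,b-\beta_1]\subset(0,\infty)$. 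Choosing $m=n$ in the multiplicativity inequality yields $e_{2n-1}(R^\al)\le e_n(R^{\al-\beta_1}:L_q\to L_q)\,e_n(R^{\beta_1}:L_p\to L_q)$, so Part 2 follows from Part 1 applied in $L_q[0,1]$ together with the fixed bound on $R^{\beta_1}$, after reindexing $2n-1\rightsquigarrow n$ at the cost of a factor $2^b$. Likewise, since $q\le p$ makes the inclusion $L_p[0,1]\hookrightarrow L_q[0,1]$ a contraction, Part 1 reduces to the claim that
\[
   \sup_{0<\al\le b}\ \sup_{1\le p\le\infty}\ \sup_{n\ge1}\ n^{\al}\,e_n\!\left(R^\al:L_p[0,1]\to L_p[0,1]\right)<\infty .
\]

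\textbf{The single-space estimate.}
For this core estimate I would decompose $R^\al$ dyadically exactly as in the proof of Proposition \ref{p1}, organizing the kernel into scale-blocks indexed by the dyadic level $j$ (where there are $2^{j}$ intervals of length $2^{-j}$) and the distance $m$ between source and target intervals. The exact scaling estimate \eqref{sc2}, namely $\|R^\al:L_p[a,b]\to L_p[a,b]\|\le c_p\,(b-a)^{\al}$, shows that the near-diagonal block at level $j$ has norm $\lesssim 2^{-j\al}$ while being (numerically) of rank $\sim2^{j}$; the far blocks ($m\ge2$) contribute a norm factor $2^{-m/p'}$ as in \eqref{b1}. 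For the finite-rank near-diagonal blocks I would use the standard volumetric covering bound, so that covering the image of a $2^{j}$-dimensional ball of radius $\rho$ to accuracy $\eps$ costs roughly $(\rho/\eps)^{2^{j}}$ balls, i.e.\ $e_{n_j}\lesssim 2^{-j\al}\,2^{-n_j/2^{j}}$ once a covering budget $n_j$ is spent at level $j$. Summing the block contributions over the $J\approx\log_2 n$ relevant scales, with $\sum_j n_j\le n$, gives the desired $e_n(R^\al:L_p\to L_p)\le C\,n^{-\al}$.

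\textbf{The main obstacle: uniformity as $\al\to0$ and in $p$.}
The delicate point is that the naive bookkeeping produces a sum $\sum_{j}2^{-j\al}$ whose value is $\sim(1-2^{-\al})^{-1}\sim(\al\ln2)^{-1}$, which blows up precisely in the regime $\al\to0$ that Part 1 must cover. The resolution I would carry out is a \emph{geometrically graded} allocation of the covering budget: set $n_j/2^{j}\asymp c\,(J-j)$ for a \emph{fixed} grading constant $c$ (say $c=1$), so that $\sum_j n_j\asymp 2^{J}\asymp n$ and the block contributions form the geometric series
\[
   \sum_{j\le J}2^{-j\al}\,2^{-c(J-j)}
   =2^{-cJ}\sum_{j\le J}2^{\,j(c-\al)}
   \le \frac{2^{-J\al}}{1-2^{-(c-\al)}}
   \le C_0\,n^{-\al},
\]
where, for $\al$ bounded (say $\al\le b$ with $c>b$), the ratio $2^{-(c-\al)}$ stays bounded away from $1$ \emph{uniformly in $\al$}, so $C_0$ carries no hidden $1/\al$. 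This is the technical heart of the argument. A second, genuine subtlety is uniformity in $p$ down to the endpoints: the far-block factor $2^{-m/p'}$ degenerates as $p\to1$ (then $p'=\infty$), so the geometric tail in $m$ must be handled separately at $p=1$ and $p=\infty$, where the positivity and explicit form of the kernel make the $L_1$- and $L_\infty$-operator estimates direct, and the intermediate $p$ recovered by interpolation of entropy numbers. Assembling the graded dyadic estimate with these endpoint treatments yields the single-space bound, and with it both parts of Theorem \ref{te1}.
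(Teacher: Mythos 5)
Your reduction of Part 2 to Part 1 (factor $R^\al=R^{\al-\beta_1}\circ R^{\beta_1}$, use Proposition \ref{enr} for the single fixed operator $R^{\beta_1}:L_p\to L_q$, and multiplicativity of entropy numbers) is exactly the paper's argument, as is the reduction of Part 1 to $q=p$ via the contractive embedding. Your ``geometrically graded budget'' correctly identifies and resolves one of the two uniformity issues: it is in substance the proof of Carl's inequality $\sup_{n\le N}n^\al e_n(S)\le C_\al\sup_{n\le N}n^\al a_n(S)$ with $C_\al=2^7(32(2+\al))^\al$ bounded as $\al\to 0$, which is what the paper cites from \cite{CS}. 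So the architecture is sound; the problem is the input you feed into that volumetric step.

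The genuine gap is in the claim that the scaling estimate \eqref{sc2} ``shows that the near-diagonal block at level $j$ has norm $\lesssim 2^{-j\al}$ while being (numerically) of rank $\sim 2^j$.'' The near-diagonal block $\sum_i P_iR^\al P_i$ over $2^j$ intervals (and likewise each adjacent block, whose kernel $(t-s)^{\al-1}$ is still singular at the common endpoint) is an \emph{infinite-rank} operator; \eqref{sc2} controls its norm but provides no rank-$2^j$ approximant with error $O(2^{-j\al})$, and without finite-rank pieces the volumetric bound $(\rho/\eps)^{2^j}$ has nothing to act on. What is actually needed is the uniform approximation-number bound $a_{2^j}(R^\al)\le C_b\,2^{-j\al}$ for all $0<\al\le b$ and all $p$, and this is precisely the heart of the paper's proof: it follows from the smoothing estimate $\|\Delta_h(R^\al f)\|_p\le \tfrac{2}{\Gamma(\al+1)}h^\al\|f\|_p$ (so that $R^\al:L_p\to B^\al_{p,\infty}$ uniformly, Proposition \ref{pr2}) combined with the explicit bound $a_n(I_p)\le 4n^{-\al}$ for the Besov embedding (Proposition \ref{pr1}), plus an iteration $a_{2n-1}(R^\al)\le[a_n(R^{\al/2})]^2$ to pass from $\al<1$ to $\al\le b$. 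None of this is derivable from \eqref{sc2} alone, and it is absent from your sketch. A second, independent defect: Part 1 includes $p=1$, where your far-block gain $2^{-m/p'}$ vanishes (compare the failure of Theorem \ref{tb} at $p=1$), and ``interpolation of entropy numbers'' cannot rescue this from mere operator-\emph{norm} bounds at the endpoints --- you would need entropy bounds at $p=1$ and $p=\infty$ to interpolate, which is circular. The Besov-space route avoids both problems, since Propositions \ref{pr1} and \ref{pr2} are explicit and uniform in $p$ down to $p=1$.
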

\begin{remark}
We do not know whether estimate \eqref{enRa2} even holds with a constant
$c_b>0$ only depending on $b$ and for all $\frac 1 p -\frac 1 q <\al\le b$.
\end{remark}
\medskip

The proof of Theorem \ref{te1} needs some preparation. We start with
introducing the necessary notation. A basic role in the proof is played by
approximation numbers defined as follows:

\begin{defn}
Given Banach spaces $E$ and $F$ and an operator $S$ from $E$ into $F$,
its $n$--th approximation number $a_n(S)$ is defined by
$$
    a_n(S):= \inf\{ \|S-A\| : A : E\to F\:\mathrm{and}\;
    \mathrm{rank}(A)<n\}\,.
$$
\end{defn}

For the main properties of approximation numbers we refer to \cite{Pie1}
and to \cite{Pie2}.
\medskip

A second basic ingredient in the proof of Theorem \ref{te1} are some special
Besov spaces. To introduce them we need the following definition:
Given $f\in L_p[0,1]$ and $0 < h\le 1$, we define the function $\Delta_h f$ by
$$
(\Delta_h f)(t) := f(t+h) -f(t)\,,\quad 0\le t\le 1-h\,.
$$
\begin{defn}
If $0<\al<1$, then the Besov space $B_{p,\infty}^\al$  consists of all
functions $f\in L_p[0,1]$ (continuous $f$ if $p=\infty$) for which
\be
\label{normB}
\|f\|_{p,\al}:= \|f\|_p + \sup_{0<h\le 1} h^{-\al}\,\|\Delta_h f\|_p <\infty\,.
\ee
\end{defn}
\medskip

Now we are in position to state and to prove a first important step in the
verification of Theorem \ref{te1}.
\begin{prop}
\label{pr1}
Suppose $0<\al<1$ and
let $I_p : B_{p,\infty}^\al \to L_p[0,1]$ be the natural embedding. Then, if
$1\le p<\infty$, it holds
\be
\label{anI}
a_n(I_p) \le 2^\al\,\left(\frac{2}{1+\al p}\right)^{1/p}\,
n^{-\al}\le 4\,n^{-\al}
\ee
while
\be
\label{anIinf}
a_n(I_\infty)\le 2^\al\,n^{-\al}\le 2\,n^{-\al}\,.
\ee
\end{prop}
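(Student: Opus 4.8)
The plan is to bound the approximation numbers $a_n(I_p)$ by constructing an explicit finite-rank operator $A$ and estimating $\|I_p - A\|$ from the Besov seminorm. The natural candidate for $A$ is a piecewise-constant (step-function) approximation on a uniform partition. Specifically, I would partition $[0,1]$ into $n$ subintervals $J_k := [(k-1)/n, k/n)$ of length $1/n$ and let $A f$ be the function that is constant on each $J_k$, equal to the average value $n\int_{J_k} f\,\d s$ (or equivalently the left endpoint value when $f$ is continuous, in the case $p=\infty$). This $A$ has rank at most $n$, so it is admissible in the definition of $a_{n+1}(I_p)$, and after shifting the index it yields a bound on $a_n$.

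The key estimate is controlling $\|f - Af\|_p$ in terms of $\sup_{0<h\le 1} h^{-\al}\|\Delta_h f\|_p$. On each interval $J_k$ the difference $f(t) - (Af)(t)$ is $f(t)$ minus its average over $J_k$, which can be written as an average of the differences $f(t) - f(t')$ with $t,t'\in J_k$, hence as an average of values $(\Delta_h f)(t')$ with $|h|\le 1/n$. For $p<\infty$ I would estimate
\be
\|f - Af\|_p^p = \sum_{k=1}^n \int_{J_k} |f(t) - (Af)(t)|^p\,\d t
\ee
by pulling the averaging inside via Jensen's inequality and then integrating the bound $|(\Delta_h f)(t')|^p \le h^{\al p}\,(\sup_{0<h\le 1}h^{-\al}\|\Delta_h f\|_p)^p$ in a suitable averaged sense over $h\in(0,1/n]$. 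Carrying the $h^{\al p}$ factor through and integrating $h$ over the interval of length $1/n$ produces the factor $(1+\al p)^{-1}$ together with $n^{-\al p}$, which after taking $p$-th roots gives exactly the constant $2^\al (2/(1+\al p))^{1/p}$ in \eqref{anI}; the extra factor $2^\al$ accommodates the $\le n$ versus $< n$ rank bookkeeping in shifting the entropy-number index. The case $p=\infty$ is simpler: on each $J_k$ one has $|f(t)-f(t')|\le h^\al \|f\|_{\infty,\al}$ with $h\le 1/n$ directly, giving the cleaner bound $2^\al n^{-\al}$ of \eqref{anIinf}.

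The main obstacle I anticipate is getting the constant sharp rather than merely correct in order, since the statement asserts the precise value $2^\al(2/(1+\al p))^{1/p}$. This requires being careful about which averaging inequality to apply and in what order: one must represent the local oscillation $f(t) - \mathrm{avg}_{J_k}(f)$ as an exact average of increments, apply Jensen at the right stage so as not to lose a constant, and handle the interaction between the spatial integration over $t\in J_k$ and the averaging over the offset. A secondary technical point is the rank bookkeeping, namely reconciling the rank-$n$ operator with the $a_n$ (rank $<n$) convention, which is what produces the $2^\al$ prefactor; one uses a finer partition or a shift of index and absorbs the resulting $(n/(n-1))^\al$-type factor into the stated constant. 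Once the averaging representation is fixed these are routine calculations, so I would present the increment-representation step carefully and then let the Hölder/Jensen estimate and the final simplification to $\le 4\,n^{-\al}$ (using $(2/(1+\al p))^{1/p}\le 2$ and $2^\al\le 2$ for $0<\al<1$) follow mechanically.
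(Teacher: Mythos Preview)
Your proposal is correct and matches the paper's proof essentially step for step: the paper uses the same rank-$n$ averaging projection $P_n$ onto piecewise constants on the uniform partition, applies H\"older (your Jensen) to the inner integral, enlarges $\bigcup_j I_j\times I_j$ to the strip $\{|t-s|\le 1/n\}$, changes variables to $h=s-t$, and integrates the bound $\|\Delta_h f\|_p^p\le h^{\alpha p}$ over $h\in(0,1/n]$ to obtain the factor $\tfrac{2}{1+\alpha p}\,n^{-\alpha p}$; the $2^\alpha$ then arises exactly from the $a_{n+1}\to a_n$ index shift, and the $p=\infty$ case is handled as you describe.
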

\begin{proof}
With
$$
I_j:=\left[\frac{j-1}{n},\frac j n\right]\,,\quad j=1,\ldots,n\,,
$$
we define the operator $P_n : L_p[0,1]\to L_p[0,1]$ as
$$
(P_n f)(t):= \sum_{j=1}^n \int_{I_j} f(s)\,\d s\, \frac{\ind_{I_j}(t)}{|I_j|}
= n\,\sum_{j=1}^n \int_{I_j} f(s)\,\d s\, \ind_{I_j}(t)\,.
$$
Of course, it is true that $\mathrm{rank}(P_n)=n$, hence by the definition of
approximation numbers we get
\be
\label{an1}
a_{n+1}(I_p)\le \|I_p - P_n\, I_p\|=\sup_{\|f\|_{p,\al}\le 1}\|f- P_n f\|_p\,.
\ee
To estimate the right hand side of \eqref{an1} let us first treat the case $1\le p<\infty$. Thus choose any $f\in B_{p,\infty}^\al$ with $\|f\|_{p,\al}\le 1$. That is, for all $0<h\le 1 $ we have
$$
\|f\|_p + h^{-\al}\,\|\Delta_h f\|_p\le 1
$$
yielding in particular
\be
\label{estns}
\|\Delta_h f\|_p\le h^\al
\ee
for all $h$ with $0<h\le 1$.

Now we are in position to estimate the right hand side of \eqref{an1} as follows:
\bea
\label{aPn1}
\nonumber
\|f- P_n f\|_p^p&=& \sum_{j=1}^n \int_{I_j}|f(s)- P_n f(s)|^p\,\d s = n^p\,\sum_{j=1}^n\int_{I_j}\Big|
\int_{I_j}[f(s)-f(t)]\d t\Big|^p\d s\\
&\le&
n^p\,\sum_{j=1}^n\int_{I_j}\Big[\int_{I_j}|f(s)-f(t)|\d t\Big]^p\d s\,.
\eea
An application of H\"older's inequality to the inner integral gives
\be
\label{Hol}
\Big[\int_{I_j}|f(s)-f(t)|\d t\Big]^p\le|I_j|^{p/p'}\cdot \int_{I_j}|f(s)-f(t)|^p\d t
=
n^{-p/p'}\, \int_{I_j}|f(s)-f(t)|^p\d t\,.
\ee
Plugging \eqref{Hol} into \eqref{aPn1} leads to
\bea
\label{aPn2}
\nonumber
\|f-P_n f\|_p^p&\le& n\,\sum_{j=1}^n\int_{I_j}\int_{I_j}|f(s)-f(t)|^p\d s \d t
\le
n\, \int_{\{|t-s|\le 1/n\}}|f(s)-f(t)|^p\,\d s \d t\\
&=& 2\,n\,\int_{\{t\le s\le (t+1/n)\wedge 1\}}|f(s)-f(t)|^p \d s \d t
\eea
because of
$$
\bigcup_{j=1}^n \left(I_j\times I_j\right)\subseteq \Big\{(t,s)\in [0,1]^2 : |t-s|\le \frac 1 n\Big\}\,.
$$
Now \eqref{aPn2} may also be written as
\beaa
2\,n\,\int_0^1 \int_{t}^{(t+1/n)\wedge 1} |f(s)-f(t)|^p\d s \d t&=&
2\,n\,\int_0^{1} \int_{0<h\le (1/n)\wedge (1-t)}|f(t+h) -f(t)|^p\d h \d t\\
&=&
2\,n\,\int_{0<h\le 1/n}\int_0^{1-h} |(\Delta_h f)(t)|^p\d t \d h\\
&\le&
2\,n\, \int_0^{1/n} h^{\al p}\d h =\frac{2}{\al p +1}\, n^{-\al p}
\eeaa
where the estimate in the last line follows by \eqref{estns}. Summing up, we get
$$
\|f- P_n f\|_p \le \left(\frac{2}{\al p +1}\right)^{1/p}\, n^{-\al }
$$
whenever $\|f\|_{p,\al}\le 1$. Hence it follows
$$
a_{n+1}(I_p)\le \left(\frac{2}{\al p +1}\right)^{1/p}\, n^{-\al }\le 2^\al\,
\left(\frac{2}{\al p +1}\right)^{1/p}\, (n+1)^{-\al }\,.
$$
Since $a_1(I_p)=\|I_p\|\le 1$, estimate \eqref{anI} holds for all numbers $n\ge 1$ and this completes the proof
for $p<\infty$.
\medskip

The case $p=\infty$ is even easier. Here we have
$$
|f(t+h)-f(t)|\le h^\al\,,\quad 1\le t\le 1-h\,,
$$
i.e.,
$$
|f(s)-f(t)|\le h^{\al}\,,\quad 0\le t,s\le 1\,, \; |t-s|\le h\,.
$$
Then we get
\beaa
\|f-P_n f\|_\infty &=&\sup_{0\le t\le 1}|f(t) -(P_n f)(t)| \le n\,\sup_{1\le j\le n}\sup_{t\in I_j}\int_{I_j}|f(t)-f(s)|\d s\\
&\le& n \, n^{-\al}\,|I_j|= n^{-\al}\,.
\eeaa
Now we proceed as in the case $p<\infty$ and arrive at
$$
a_n(I_\infty)\le 2^\al\, n^{-\al}
$$
as asserted.
\end{proof}

\begin{remark}
The fact $a_n(I_p)\approx n^{-\al}$ is well--known (cf.~\cite{ET}), yet does not suffice for our purposes. We have to have a uniform upper bound for $a_n(I_p)$ as in \eqref{anI} or \eqref{anIinf}, respectively.
\end{remark}
\medskip

Another basic fact will play a crucial role in the proof of Theorem \ref{te1}. It was recently proved in \cite{CHR} (cf.~Lemma 5.19).
\begin{prop}
If $0<\al<1$, then for each $f\in L_p[0,1]$
we have
$$
\|\Delta_h(R^\al f)\|_p \le \frac{2}{\Gamma(\al+1)}\,h^\al \|f\|_p\le \frac{2}{K_0}\,h^\al\,\|f\|_p\,.
$$
\end{prop}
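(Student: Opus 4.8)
The plan is to recognize $R^\al$ as a convolution operator and to reduce the whole estimate to Young's convolution inequality. Setting $k_\al(u):=u^{\al-1}/\Gamma(\al)$ for $u>0$ and $k_\al(u):=0$ for $u\le 0$, and extending $f$ by zero outside $[0,1]$, one has $(R^\al f)(t)=\int_\R k_\al(t-s)\,f(s)\,\d s$. Consequently, for $0\le t\le 1-h$,
\[
   (\Delta_h(R^\al f))(t)=\int_\R\bigl[k_\al(t+h-s)-k_\al(t-s)\bigr] f(s)\,\d s=(g_h*f)(t)\,,
\]
where $g_h(u):=k_\al(u+h)-k_\al(u)$. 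Young's inequality then yields $\|\Delta_h(R^\al f)\|_p\le \|g_h\|_1\,\|f\|_p$, with the left norm taken over $[0,1-h]$ and the convolution norm over all of $\R$. Thus everything reduces to computing $\|g_h\|_1=\int_\R\abs{k_\al(u+h)-k_\al(u)}\,\d u$ exactly.

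The main computational step is this $L_1$-evaluation, and here the hypothesis $0<\al<1$ is essential: it makes $k_\al$ strictly decreasing on $(0,\infty)$, which pins down the sign of $g_h$ on each relevant region so that the absolute value can be removed. For $-h<u\le 0$ we have $k_\al(u)=0$, so $\abs{g_h(u)}=k_\al(u+h)$; for $u>0$ monotonicity gives $k_\al(u+h)\le k_\al(u)$, so $\abs{g_h(u)}=k_\al(u)-k_\al(u+h)$; and $g_h\equiv 0$ for $u\le -h$. The first region contributes $\frac{1}{\Gamma(\al)}\int_0^h v^{\al-1}\,\d v=h^\al/\Gamma(\al+1)$. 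For the second region I would compute $\int_0^\infty\bigl[u^{\al-1}-(u+h)^{\al-1}\bigr]\,\d u$ using the antiderivative $\al^{-1}\bigl(u^\al-(u+h)^\al\bigr)$; its boundary term at infinity vanishes because $(u+h)^\al-u^\al\to 0$ as $u\to\infty$ (again by $\al<1$), leaving $h^\al/\al$ and hence a contribution $h^\al/\Gamma(\al+1)$ as well.

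Adding the two contributions gives $\|g_h\|_1=2h^\al/\Gamma(\al+1)$, which is precisely the claimed constant. Finally, since $\al+1>0$, the definition \eqref{defK} of $K_0$ gives $\Gamma(\al+1)\ge K_0$, so $2/\Gamma(\al+1)\le 2/K_0$, completing the chain of inequalities. The one point requiring genuine care is the convergence of the integral over $(0,\infty)$: the two summands $u^{\al-1}$ and $(u+h)^{\al-1}$ are individually non-integrable near infinity, but their difference is integrable, and the cancellation is made transparent by the explicit antiderivative. I expect this convergence argument to be the main (though modest) obstacle; everything else is routine once the convolution representation is in place.
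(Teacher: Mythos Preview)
Your argument is correct. The convolution representation, Young's inequality, and the explicit evaluation of $\|g_h\|_1$ all go through exactly as you describe; the monotonicity of $u\mapsto u^{\al-1}$ for $0<\al<1$ is precisely what allows you to drop the absolute values, and the vanishing of $(u+h)^\al-u^\al$ at infinity (e.g.\ by the mean value theorem) is the right justification for the improper integral over $(0,\infty)$.

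As for comparison with the paper: there is nothing to compare against. The paper does not prove this proposition at all --- it simply cites Lemma~5.19 of \cite{CHR} (Carl--Hinrichs--Rudolph) and moves on. So you have supplied a self-contained proof where the paper only gives a reference. Your approach via Young's inequality is in fact the natural one and is almost certainly what \cite{CHR} does as well; the computation is short enough that the authors presumably just chose not to reproduce it.
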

Since
$$
\|R^\al f\|_p\le \frac{1}{K_0}\,\|f\|_p\,,
$$
by definition \eqref{normB} we get the following result:
\begin{prop}
\label{pr2}
If $0<\al<1$, then
$$
\|R^\al f\|_{p,\al}\le \frac{3}{K_0}\,\|f\|_p\,,\quad f\in L_p[0,1]\,,
$$
i.e., $R^\al$ is a bounded operator from $L_p[0,1]$ into $B_{p,\infty}^\al$ with operator norm $\|R^\al\|\le \frac{3}{K_0}$.
\end{prop}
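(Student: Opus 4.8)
The plan is to read off the bound directly from the definition \eqref{normB} of the Besov norm, splitting it into its two constituent pieces and estimating each one separately. By \eqref{normB} we have
\[
   \|R^\al f\|_{p,\al} = \|R^\al f\|_p + \sup_{0<h\le 1} h^{-\al}\,\|\Delta_h(R^\al f)\|_p\,,
\]
so it suffices to control the $L_p$-norm of $R^\al f$ on the one hand and the weighted supremum of its first differences on the other.

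For the first summand I would simply invoke the elementary mapping bound $\|R^\al f\|_p\le \frac{1}{K_0}\,\|f\|_p$ recorded immediately above the statement, which in turn rests on $\Gamma(\al+1)\ge K_0$ together with the classical $L_p$-boundedness of $R^\al$. For the second summand the decisive input is the preceding Proposition, asserting $\|\Delta_h(R^\al f)\|_p\le \frac{2}{K_0}\,h^\al\,\|f\|_p$ for every $0<h\le 1$. Multiplying this by $h^{-\al}$ cancels the $h$-dependence exactly, so that $h^{-\al}\,\|\Delta_h(R^\al f)\|_p\le \frac{2}{K_0}\,\|f\|_p$ holds uniformly in $h$, and hence the same bound survives after passing to the supremum over $h\in(0,1]$.

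Adding the two estimates gives
\[
   \|R^\al f\|_{p,\al}\le \frac{1}{K_0}\,\|f\|_p + \frac{2}{K_0}\,\|f\|_p = \frac{3}{K_0}\,\|f\|_p\,,
\]
which is precisely the asserted inequality; the identification of the operator norm of $R^\al : L_p[0,1]\to B_{p,\infty}^\al$ as being at most $3/K_0$ is then immediate. I do not expect any genuine obstacle here: all of the analytic effort is already absorbed into the modulus-of-smoothness estimate quoted from \cite{CHR}, and once that bound (together with the trivial $L_p$-estimate) is available the conclusion is a one-line combination. The only point deserving a moment's care is the exact cancellation $h^{-\al}\cdot h^\al = 1$, which is what makes the difference-quotient bound uniform in $h$ and therefore lets the passage to the supremum over $h\in(0,1]$ cost nothing.
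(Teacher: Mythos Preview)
Your argument is correct and is exactly the approach the paper takes: the proposition is stated there as an immediate consequence of the preceding difference estimate $\|\Delta_h(R^\al f)\|_p\le \frac{2}{K_0}h^\al\|f\|_p$ together with the elementary bound $\|R^\al f\|_p\le \frac{1}{K_0}\|f\|_p$, combined via the definition \eqref{normB}.
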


Now we are ready to prove Theorem \ref{te1}.
\medskip

\noindent
\textbf{Proof of Theorem \ref{te1}:} We start with the case $1\le q\le p\le \infty$. Then $(1/p-1/q)_+=0$, hence for any $\al>0$ the operator $R^\al$ is compact from $L_p[0,1]$ into $L_q[0,1]$. In particular, $R^\al$ maps $L_p[0,1]$ into $L_p[0,1]$, hence, if $I_{p,q}$ denotes the natural embedding from $L_p[0,1]$ to $L_q[0,1]$ it follows that
$$
(R^\al : L_p[0,1]\to L_q[0,1]) = I_{p,q}\circ (R^\al : L_p[0,1]\to L_p[0,1])\,,
$$
which yields
$$
e_n(R^\al : L_p[0,1]\to L_q[0,1])\le \|I_{p,q}\|\cdot e_n(R^\al : L_p[0,1]\to L_p[0,1])=
e_n(R^\al : L_p[0,1]\to L_p[0,1])\,.
$$
Consequently, it suffices to verify \eqref{enRa1} in the case $q=p$.

In a first step we suppose $0<b<1$. Then Propositions \ref{pr1} and \ref{pr2} apply and lead to
$$
a_n(R^\al)\le \|R^\al : L_p[0,1]\to B_{p,\infty}^\al\|\cdot a_n(I_p)\le c_0 \, n^{-\al}
$$
where, for example, $c_0$ may be chosen as $12/K_0$. Now, if $1\le b<2$ and $\al\le b$, we get
$$
a_{2n-1}(R^\al)\le \left[a_n(R^{\al/2})\right]^2\le c_0^2\, n^{-\al}\,,
$$
hence
$$
a_n(R^\al)\le 2^\al\, c_0^2\, n^{-\al}\le 2^b\,c_0^2\,n^{-\al}
$$
provided that $0<\al \le b$. Iterating further for each $b>0$ there is a $C_b>0$ with
\be
\label{anRa}
a_n(R^\al)\le C_b\,n^{-\al}
\ee
with $C_b$ independent of $p$.
\medskip

Next we refer to Theorem 3.1.1 in \cite{CS} which asserts the following:
Let $S$ be an operator between real Banach spaces $E$ and $F$. For each $0<\al<\infty$ there is a constant $C_\al>0$ such that for all $N\ge 1$ it follows that
$$
\sup_{1\le n\le N} n^\al\, e_n(S)\le C_\al\, \sup_{1\le n\le N} n^\al\, a_n(S)\,.
$$
Hereby $C_\al$ may be chosen as $C_\al= 2^7(32(2+\al))^\al$ .

Applying this result together with \eqref{anRa} gives for each $N\ge 1$ that
$$
\sup_{1\le n\le N} n^{\al}\,e_n(R^\al)\le C_\al\, \sup_{1\le n\le N} n^\al\, a_n(R^\al)\le C_\al\,C_b
$$
provided that $0<\al\le b$. Since $C^b:=\sup_{0<\al\le b} C_\al<\infty$, this implies (note that $N\ge 1$ is arbitrary)
$$
e_n(R^\al)\le c_b\,n^{-\al}
$$
with constant $c_b:= C^b\,C_b$ only depending on $b$. This completes the
proof of the first part of Theorem \ref{te1}.
\medskip

We turn now to the proof of \eqref{enRa2}. Here $1/p -1/q >0$, hence $R^\al$
is compact from $L_p[0,1]$ to $L_q[0,1]$ only if $\al > 1/p -1/q$ . Take any
pair $a,b$ of real numbers with $1/p-1/q <a<b<\infty$ and choose
$\al\in (a,b]$ arbitrarily. Then we may decompose $R^\al$ as follows:
\be
\label{dec}
(R^\al : L_p[0,1]\to L_q[0,1])
= (R^a : L_p[0,1]\to L_q[0,1])\circ (R^{\al-a} : L_p[0,1]\to L_p[0,1]).
\ee
Because of $a>1/p -1/q$ the operator  $R^a$ on the right hand side of
\eqref{dec} is compact and by \eqref{enRa} we have
\be
\label{r1}
e_n(R^a)\le C_{a,p,q}\, n^{-a}\,.
\ee
On the other hand, $R^{\al-a}$ maps $L_p[0,1]$ into $L_p[0,1]$. Since $0<\al-a\le b-a$, the first part of Theorem \ref{te1} applies to $\al - a$, hence
\eqref{enRa1} gives
\be
\label{r2}
e_n(R^{\al - a}) \le c_{b-a}\, n^{-(\al-a)}\,.
\ee
In view of \eqref{dec} estimates  \eqref{r1} and \eqref{r2} imply
$$
e_{2n-1}(R^\al)\le e_n(R^a)\,e_n(R^{\al-a})\le C_{a,p,q}\, c_{b-a}\,n^{-\al}
$$
leading to
$$
e_n(R^\al)\le 2^\al \,C_{a,p,q}\, c_{b-a}\,n^{-\al}\le c_{a,b}\,n^{-\al}
$$
with $c_{a,b}= 2^b\,C_{a,p,q}\, c_{b-a}$. This being true for all $a<\al\le b$ proves \eqref{enRa2} and completes the proof of Theorem \ref{te1}.
\hspace*{\fill}$\square$\medskip\par
\medskip

The next corollary of Theorem \ref{te1} will not be used later on. But we believe that it could be of interest in its own right because it shows that also the constants $c_{\al,p,q}$ on the left hand side of \eqref{enRa} may be chosen uniformly.
\begin{cor}
Let $b>(1/p-1/q)_+$ be a given real number. Then there is a constant $\kappa_{b,p,q}>0$ such that for all
$(1/p-1/q)_+<\al\le b$ it follows that
$$
\kappa_{b,p,q}\,n^{-\al}\le e_n(R^\al : L_p[0,1]\to L_q[0,1])\,.
$$
\end{cor}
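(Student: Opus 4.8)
The plan is to derive the uniform lower bound from the uniform \emph{upper} bound already established in Theorem \ref{te1}, via the classical semigroup property together with the submultiplicativity of entropy numbers. The essential observation is that Proposition \ref{enr}, applied to the single fixed exponent $b$, provides a lower bound $e_n(R^b)\ge c_{b,p,q}\,n^{-b}$ with a genuinely fixed constant; the whole task is then to transport this one inequality to all $\al\in((1/p-1/q)_+,b]$ simultaneously, and the uniform upper estimate of Theorem \ref{te1} is exactly the tool that permits this.

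First I would fix $\al$ with $(1/p-1/q)_+<\al<b$ and use the classical semigroup identity $R^{b}=R^{b-\al}\circ R^\al$ read in the form
$$
(R^b:L_p\to L_q)=(R^{b-\al}:L_q\to L_q)\circ(R^\al:L_p\to L_q).
$$
Both factors are admissible: $R^\al:L_p\to L_q$ is bounded since $\al>(1/p-1/q)_+$, while $R^{b-\al}$ acts \emph{in} $L_q$, where every positive exponent is allowed. The submultiplicativity $e_{2n-1}(ST)\le e_n(S)\,e_n(T)$ (cf.~\cite{CS}, \cite{ET}) then gives
$$
e_{2n-1}(R^b)\le e_n(R^{b-\al}:L_q\to L_q)\cdot e_n(R^\al:L_p\to L_q).
$$

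Now the two ingredients plug in. For the left-hand side, Proposition \ref{enr} together with $(2n-1)^{-b}\ge 2^{-b}n^{-b}$ yields $e_{2n-1}(R^b)\ge c_{b,p,q}\,2^{-b}\,n^{-b}$. For the cofactor I would invoke part (1) of Theorem \ref{te1} applied to the diagonal case $p=q$ (both spaces equal $L_q$): since $0<b-\al\le b$, it gives $e_n(R^{b-\al}:L_q\to L_q)\le c_b\,n^{-(b-\al)}$ with $c_b$ independent of $\al$. Substituting and solving for $e_n(R^\al)$,
$$
e_n(R^\al:L_p\to L_q)\ge \frac{c_{b,p,q}\,2^{-b}}{c_b}\;n^{-b+(b-\al)}=\kappa_{b,p,q}\,n^{-\al},
$$
so that $\kappa_{b,p,q}:=c_{b,p,q}\,2^{-b}/c_b$ works, the exponents cancelling to exactly $-\al$. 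The endpoint $\al=b$ is covered directly by Proposition \ref{enr}.

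The one point that genuinely needs care is the behaviour of the cofactor estimate as $\al\uparrow b$, where the exponent $b-\al$ degenerates to $0$. Here it is crucial that the bound on $e_n(R^{b-\al}:L_q\to L_q)$ come from Theorem \ref{te1}(1), whose constant depends only on the upper parameter $b$ and stays finite for arbitrarily small positive exponents, rather than from the classical estimate \eqref{enRa}, whose constant $C_{\al,p,q}$ a priori could blow up as $b-\al\to 0$. It is precisely this uniformity that turns the single lower bound for $R^b$ into a lower bound valid across the whole range of $\al$, and I expect this to be the only nontrivial step; everything else is the bookkeeping of powers in the displayed chain above.
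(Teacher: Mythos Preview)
Your proof is correct and follows essentially the same route as the paper: factor $R^b$ through $R^\al$ and $R^{b-\al}$, combine the submultiplicativity of entropy numbers with the lower bound for $e_{2n-1}(R^b)$ from Proposition~\ref{enr} and the uniform upper bound for $e_n(R^{b-\al})$ from Theorem~\ref{te1}(1), then cancel the powers of $n$. The only cosmetic difference is that the paper lets $R^{b-\al}$ act on $L_p$ rather than $L_q$, which is immaterial since Theorem~\ref{te1}(1) applies equally in either diagonal case.
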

\begin{proof}
Choose an arbitrary $\al$ with  $(1/p-1/q)_+<\al<b$. By Proposition \ref{enr} we have
$$
c_{b,p,q}\,(2n-1)^{-b}\le e_{2n-1}(R^b)\le e_n(R^\al)\,e_n(R^{b-\al})
$$
where $R^{b-\al}$ is regarded as operator from $L_p[0,1]$ to $L_p[0,1]$ and $R^\al$ as operator
from $L_p[0,1]$ into $L_q[0,1]$. Next we apply Theorem \ref{te1} to $R^{b-\al}$. Note that $0<b-\al<b$, hence
\eqref{enRa1} implies
$$
e_n(R^{b-\al}) \le c_b\, n^{-(b-\al)}\,.
$$
Combining these two estimates leads to
$$
2^{-b}\,c_{b,p,q}\, c_b^{-1}\,n^{-\al}\le e_n(R^\al)
$$
which completes the proof with $\kappa_{b,p,q}=2^{-b}\,c_{b,p,q}\, c_b^{-1}$.
\end{proof}

\section{General entropy bounds for $\Ra$}
\label{Eb}
\setcounter{equation}{0}

\subsection{Upper bounds}
Proposition \ref{enr} asserts that the degree of compactness
of $R^\al$ becomes better along with the growth of the integration order $\al$.
This observation suggests the following:\\
If $\al : [0,1]\to [0,\infty)$ is a measurable function with
\be
\label{al0}
   \al_0:= \inf_{0\le t\le 1}\al(t) > (1/p -1/q)_+\,,
\ee
then the entropy numbers $e_n(\Ra)$ should decrease
at least as fast as  $e_n(R^{\al_0})$.  Our first result says that this is indeed valid.

\begin{prop}
\label{better}
Suppose that $\al_0$, the infimum of $\al(\cdot)$, satisfies $\eqref{al0}$. Then, if $q>1$, it follows
\be
\label{al00}
e_n(\Ra : L_p[0,1]\to L_q[0,1])\le c\, n^{-\al_0}\,.
\ee
If $1< q\le p\le\infty$, the constant $c>0$  in $\eqref{al00}$ may be chosen uniformly for all $\al_0\le b$
while for $1\le p<q\le \infty$ this is valid for all $1/p -1/q < a<b <\infty$ and $\al_0\in[a,b]$. Moreover, in this case it might be that $c>0$ also depends on $p$ and $q$.
\end{prop}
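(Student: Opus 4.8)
The plan is to reduce the variable--order operator to a classical one via the weak semi--group property of Proposition \ref{sg}, and then to push the uniform entropy estimates of Theorem \ref{te1} through the resulting composition. The only genuine difficulty is that the most natural splitting, which would peel off the constant order $\al_0$, is \emph{not} directly admissible: whenever $\al(\cdot)$ attains its infimum $\al_0$ on a set of positive measure, the factor $R^{\al(\cdot)-\al_0}$ has vanishing order there and is not covered by our framework. I would circumvent this by an $\eps$--perturbation combined with a limiting argument.

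Concretely, for small $\eps>0$ with $\al_0-\eps>(1/p-1/q)_+$ I would write $\al(\cdot)=[\al(\cdot)-\al_0+\eps]+(\al_0-\eps)$. Since the first summand is bounded below by $\eps>0$, it is a.e.\ positive, and $\al_0-\eps>0$, so Proposition \ref{sg} yields the factorization
\[
   \Ra = R^{\al(\cdot)-\al_0+\eps}\circ R^{\al_0-\eps},
\]
read as $R^{\al_0-\eps} : L_p[0,1]\to L_q[0,1]$ followed by $R^{\al(\cdot)-\al_0+\eps} : L_q[0,1]\to L_q[0,1]$ (the codomain $L_q$ of the first factor is legitimate because $\al_0-\eps>(1/p-1/q)_+$). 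The elementary property $e_n(S\circ T)\le \|S\|\cdot e_n(T)$ of entropy numbers then gives
\[
   e_n(\Ra)\le \bigl\|R^{\al(\cdot)-\al_0+\eps} : L_q[0,1]\to L_q[0,1]\bigr\|\cdot e_n\bigl(R^{\al_0-\eps} : L_p[0,1]\to L_q[0,1]\bigr).
\]

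For the first factor I would invoke Theorem \ref{tb}: as $q>1$, the variable--order operator $R^{\al(\cdot)-\al_0+\eps}$ is bounded on $L_q[0,1]$ with norm at most $c_q$, a constant depending only on $q$, and in particular independent of $\eps$ and of $\al(\cdot)$. For the second factor I would use the sharp uniform bounds of Theorem \ref{te1}. In the case $1<q\le p\le\infty$ one has $(1/p-1/q)_+=0$, so part~(1) applies to the exponent $\al_0-\eps\in(0,b]$ and gives $e_n(R^{\al_0-\eps})\le c_b\,n^{-(\al_0-\eps)}$ with $c_b$ independent of $\eps$ (and of $p,q$). In the case $1\le p<q\le\infty$ I would first fix an auxiliary $a'$ with $1/p-1/q<a'<a$ and restrict to $\eps\le a-a'$, so that $\al_0-\eps\in[a',b]$; then part~(2) yields $e_n(R^{\al_0-\eps})\le c_{a',b}\,n^{-(\al_0-\eps)}$. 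In both cases the resulting constant is independent of $\eps$ and uniform over the prescribed range of $\al_0$.

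Combining the two estimates, for every admissible $\eps$ one obtains $e_n(\Ra)\le C\,n^{-(\al_0-\eps)}=C\,n^{\eps}\,n^{-\al_0}$ with $C$ independent of $\eps$. Since the left--hand side does not depend on $\eps$, letting $\eps\to 0^+$ (for each fixed $n\ge 1$ one has $n^{\eps}\to 1$) produces the claimed bound $e_n(\Ra)\le C\,n^{-\al_0}$ with the stated uniformity. I expect the main point to get right to be precisely this last limiting step together with the bookkeeping that keeps $C$ free of $\eps$: the entropy constant must be drawn from the $\eps$--independent ranges $(0,b]$ or $[a',b]$ of Theorem \ref{te1}, and the operator--norm constant from the $\al(\cdot)$--uniform bound of Theorem \ref{tb}.
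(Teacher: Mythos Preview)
Your argument is correct and is essentially the paper's own proof: both decompose $\Ra=R^{\al(\cdot)-\beta}\circ R^{\beta}$ for $\beta<\al_0$ close to $\al_0$, bound the variable--order factor on $L_q$ via Theorem~\ref{tb}, bound $e_n(R^\beta)$ via Theorem~\ref{te1}, and then let $\beta\nearrow\al_0$ (your $\eps=\al_0-\beta$). Your introduction of the auxiliary $a'<a$ in the case $p<q$ is in fact a slightly more careful handling of the uniformity in $\al_0\in[a,b]$ than the paper's own write--up.
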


\begin{proof}
Let us start with the slightly more complicated case $p<q$. In view of \eqref{al0} we may choose a number $a$ satisfying $1/p-1/q< a < \al_0$.
Suppose, furthermore, $\al_0\le b$ for a given $b$. Next we take any $\beta\in[a,\al_0)$ and write
$\Ra$ as
\be
\label{decR}
\Ra = R^{\al(\cdot)-\beta}\circ R^\beta
\ee
where $R^\beta : L_p[0,1]\to L_q[0,1]$ and $R^{\al(\cdot)-\beta}$ acts in $L_q[0,1]$.
Because  of $\beta\in[a,b]$ we may apply Theorem \ref{te1}. Consequently, there is a constant $c_{a,b}>0$ (maybe depending also on $p$ and $q$) such that
\be
\label{entb}
e_n(R^\beta: L_p[0,1]\to L_q[0,1])\le c_{a,b}\,n^{-\beta}\,.
\ee
Furthermore, $\inf_{0\le t\le 1}[\al(t)-\beta]>0$, hence, since $q>1$, Theorem \ref{tb} shows
\be
\label{normRb}
\|R^{\al(\cdot)-\beta} : L_q[0,1]\to L_q[0,1]\|\le c_q\,.
\ee
It is important to know that $c_q$ may be taken independent of $\beta$. Combining \eqref{decR}, \eqref{entb} and \eqref{normRb} leads to
$$
e_n(\Ra)\le \|R^{\al(\cdot)-\beta}\|\, e_n(R^\beta)\le c_q\, c_{a,b}\,n^{-\beta}\,.
$$
This being true for all $a\le \beta<\al_0$ allows us to take the limit $\beta\to \al_0$ and proves the proposition for $p<q$.

The case $1<q\le p\le\infty$ follows by the same arguments. The only difference is that here we may choose $\beta$ arbitrarily in $(0,\al_0)$ because in this case the first part of Theorem \ref{te1} applies.
\end{proof}
\medskip

\begin{remark}
If $\al(t)>\al_0$ a.e.~and if $\al(\cdot)-\al_0$ satisfies \eqref{L1n}, then Proposition \ref{better} also holds for $q=1$. Note that in this case we may choose $\beta=\al_0$ . Then Proposition \ref{bL1} applies and leads to $\|R^{\al(\cdot)-\al_0} : L_1[0,1]\to L_1[0,1]\|<\infty$. Writing $\Ra= R^{\al(\cdot)-\al_0}\circ R^{\al_0}$ gives directly the desired estimate.
\end{remark}
\medskip

Suppose now that $\al(\cdot)$ attains its infimum $\al_0$ at a single point. Then it is very likely that the entropy numbers $e_n(\Ra)$ even tend faster to zero than those of
$R^{\al_0}$. We shall investigate this question for increasing functions
$\al(\cdot)$.

\begin{prop}
\label{lem1}
Suppose $1\le p\le\infty$ and $1<q\le\infty$. If $\al(\cdot)$ is non--decreasing so that $\al_0=\al(0)>(1/p-1/q)_+$, then for each $r\in(0,1)$ and integers $n_1$ and $n_2$ it follows that
\be
\label{split}
e_{n_1+n_2-1}(\Ra)\le c_1\, r^{\al(0)+1/q-1/p}\,n_1^{-\al(0)} + e_{n_2}(R^{\al_r(\cdot)})\le
 c_2\, \left(r^{\al(0)+1/q-1/p}\,n_1^{-\al(0)} + n_2^{-\al(r)}\right)
\ee
where
\be
\label{alr}
\al_r(t)=\left\{
\begin{array}{ccc}
\al(t) &:& r\le t\le 1\\
\al(r) &:& 0\le t<r\,.
\end{array}
\right.
\ee
If $p\le q$, the constants $c_1$ and $c_2$ may be chosen independent of $p$ and $q$, only depending on $b>0$ whenever $\al(t)\le b$. In the case $q<p$ the constants $c_1, c_2>0$ probably depend on $p$ and $q$ and may be chosen uniformly for functions $\al(\cdot)$ satisfying $a\le \al(t)\le b$ for some $(1/p-1/q)_+<a<b<\infty$.
\end{prop}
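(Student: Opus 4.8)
The plan is to split $\Ra$ into a part that only depends on the values of $f$ on $[0,r]$ and a part governed by the modified function $\al_r(\cdot)$, and then to add the entropy estimates of the two parts by the subadditivity of entropy numbers.

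First I would write $\Ra = P_{[0,r]}\circ\Ra + P_{[r,1]}\circ\Ra$, where $P_{[0,r]}f := f\cdot\ind_{[0,r]}$ and $P_{[r,1]}f := f\cdot\ind_{[r,1]}$. Two elementary observations drive the argument. For $t<r$ the value $(\Ra f)(t)$ depends only on $f|_{[0,r)}$, so $P_{[0,r]}\circ\Ra$ agrees, up to the norm--one restriction and zero--extension maps, with $\Ra$ regarded as operator from $L_p[0,r]$ into $L_q[0,r]$; in particular $e_{n_1}(P_{[0,r]}\circ\Ra)\le e_{n_1}(\Ra:L_p[0,r]\to L_q[0,r])$. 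For $t\ge r$ we have $\al(t)=\al_r(t)$ by \eqref{alr}, whence $P_{[r,1]}\circ\Ra = P_{[r,1]}\circ R^{\al_r(\cdot)}$ and, since $\|P_{[r,1]}\|\le 1$, also $e_{n_2}(P_{[r,1]}\circ\Ra)\le e_{n_2}(R^{\al_r(\cdot)})$. Combining these with the standard subadditivity $e_{n_1+n_2-1}(S+T)\le e_{n_1}(S)+e_{n_2}(T)$ (see \cite{CS} or \cite{ET}) already gives
\[
   e_{n_1+n_2-1}(\Ra)\le e_{n_1}\big(\Ra:L_p[0,r]\to L_q[0,r]\big)+e_{n_2}(R^{\al_r(\cdot)})\,,
\]
so it remains to estimate the first summand, which produces the left inequality of \eqref{split}.

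To control $e_{n_1}(\Ra:L_p[0,r]\to L_q[0,r])$ I would invoke the scaling identity of Proposition \ref{scale}. Since $J_p$ and $J_q$ are isometric isomorphisms the entropy numbers are invariant under composition with them, so $e_{n_1}(\Ra:L_p[0,r]\to L_q[0,r]) = e_{n_1}(M_{\al,r}\circ R^{\tilde\al(\cdot)})\le \|M_{\al,r}\|\, e_{n_1}(R^{\tilde\al(\cdot)})$. Here the monotonicity of $\al(\cdot)$ is essential: because $\tilde\al(t)=\al(rt)\ge\al(0)=\al_0$ and $0<r<1$, we have $r^{\tilde\al(t)}\le r^{\al_0}$, hence $\|M_{\al,r}\|=\sup_{0\le t\le 1}r^{\tilde\al(t)+1/q-1/p}=r^{\al_0+1/q-1/p}$. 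Moreover $\inf_t\tilde\al(t)=\al_0>(1/p-1/q)_+$, so Proposition \ref{better} yields $e_{n_1}(R^{\tilde\al(\cdot)})\le c\,n_1^{-\al_0}$, and multiplying out gives the first summand $c_1\,r^{\al(0)+1/q-1/p}\,n_1^{-\al(0)}$.

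For the right inequality of \eqref{split} I would apply Proposition \ref{better} a second time, now to $R^{\al_r(\cdot)}:L_p[0,1]\to L_q[0,1]$. Since $\al(\cdot)$ is non-decreasing one has $\inf_t\al_r(t)=\al(r)\ge\al_0>(1/p-1/q)_+$, so the proposition yields $e_{n_2}(R^{\al_r(\cdot)})\le c\,n_2^{-\al(r)}$. The one point that needs care---and the main obstacle---is the uniformity of the constants. Both applications of Proposition \ref{better} are to operators from $L_p$ into $L_q$ whose orders $\tilde\al(\cdot)$ and $\al_r(\cdot)$ have infima at least $\al_0$ and suprema at most $b$, so the claimed uniformity of $c_1,c_2$ is inherited directly from the corresponding two cases of that proposition; the only genuinely delicate bookkeeping is to carry these constants unchanged through the scaling step (where the factor $\|M_{\al,r}\|$ is a pure power of $r$) and through the subadditivity.
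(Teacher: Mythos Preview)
Your proof is correct and follows essentially the same route as the paper: the decomposition $\Ra = P_{[0,r]}\Ra + P_{[r,1]}\Ra = P_{[0,r]}\Ra P_{[0,r]} + P_{[r,1]}R^{\al_r(\cdot)}$, the subadditivity of entropy numbers, then Proposition~\ref{scale} together with Proposition~\ref{better} for the first summand and Proposition~\ref{better} again for the second. Your explicit computation of $\|M_{\al,r}\|=r^{\al_0+1/q-1/p}$ via the monotonicity of $\al(\cdot)$ is exactly what the paper uses implicitly.
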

\begin{proof}
Let $P_{[0,r]}$ and $P_{[r,1]}$ be the projections defined by
$$
 P_{[0,r]}:=f\,\ind_{[0,r]}\quad \mbox{and}\quad P_{[r,1]} f := f\,\ind_{[r,1]}\,
$$
respectively. Then we get
$$
\Ra
= P_{[0,r]}\,\Ra + P_{[r,1]}\Ra =  P_{[0,r]}\,\Ra\,P_{[0,r]} + P_{[r,1]}\,R^{\al_r(\cdot)}
$$
with $\al_r(\cdot)$ defined by \eqref{alr}.
Consequently we obtain
\bea
\label{e2n}
\nonumber
e_{n_1+n_2 -1}(\Ra)&\le& e_{n_1}(P_{[0,r]}\,\Ra\,P_{[0,r]})
    + e_{n_2}(P_{[r,1]}\,R^{\al_r(\cdot)})\\
&\le& e_{n_1}(P_{[0,r]}\,\Ra\,P_{[0,r]}) + e_{n_2}(R^{\al_r(\cdot)})\,.
\eea
Observe that $ P_{[0,r]}\,\Ra\,P_{[0,r]}$ is nothing else as
$\Ra$ regarded from $L_p[0,r]$ to $L_q[0,r]$. Hence Proposition \ref{scale}
applies and gives
\be
\label{MR}
e_{n_1}(P_{[0,r]}\,\Ra\,P_{[0,r]})\le \|M_{\al,r}\|\,e_{n_1}(R^{\tilde\al(\cdot)})
\le c\,r^{\al(0)+1/q-1/p}\,n_1^{-\al(0)}
\ee
where the last estimate follows by Proposition \ref{better} because of
$$
\inf_{0\le t\le 1}\tilde\al(t)= \inf_{0\le t\le 1}\al(t\,r) =\al(0)\,.
$$
Plugging \eqref{MR} into \eqref{e2n} proves the first estimate in \eqref{split}.

Another application of Proposition \ref{better}, yet this time with
$\al_r(\cdot)$, finally implies
$$
e_{n_2}(R^{\al_r(\cdot)})\le c\,n_2^{-\al_r(0)} = c\,n_2^{-\al(r)}
$$
and this gives the second estimate in \eqref{split}. This completes the proof.
\end{proof}
\medskip

Let us state now a useful corollary of Proposition \ref{lem1} .
\begin{cor}
\label{co1}
Let $0=r_0<r_1<\cdots<r_m=1$ be a partition of $[0,1]$.
Furthermore, let $n_1,\ldots,n_m$ be given integers and set $N:=\sum_{j=1}^m n_j$.
Then it follows that
\be \label{e_iter}
   e_{N-m+1}(\Ra)
   \le c\,\sum_{j=1}^{m} r_j^{\al(r_{j-1})+1/q-1/p}\, n_j^{-\al(r_{j-1})}\,.
\ee
Here the constant $c>0$ neither depends on the $n_j$ and the integer $m$ nor on the choice of
the partition.
\end{cor}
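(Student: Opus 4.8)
The plan is to derive \eqref{e_iter} by iterating the one--split estimate \eqref{split} of Proposition \ref{lem1} across the inner partition points $r_1,\dots,r_{m-1}$, and then to close off the remaining piece with Proposition \ref{better}. The observation that makes the iteration possible is that truncating $\al(\cdot)$ at a point $r$ produces a function $\al_r(\cdot)$ that is again non--decreasing, with $\al_r(t)=\max\{\al(r),\al(t)\}$ and infimum $\al(r)\ge\al_0$; moreover, truncating $\al_{r_{j-1}}(\cdot)$ at the later point $r_j$ reproduces $\al_{r_j}(\cdot)$ itself, i.e. $(\al_{r_{j-1}})_{r_j}=\al_{r_j}$ because $r_{j-1}<r_j$. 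This self--similarity is exactly what lets the residual operator after one split be fed back into Proposition \ref{lem1}.

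Concretely, writing $\Ra=R^{\al_{r_0}(\cdot)}$ with $r_0=0$ (so $\al_{r_0}=\al$), I would prove by induction on $j$ that
\[
   e_{\,n_1+\cdots+n_j+k-j}(\Ra)
   \le c_1\sum_{i=1}^{j} r_i^{\al(r_{i-1})+1/q-1/p}\,n_i^{-\al(r_{i-1})}
   + e_k\big(R^{\al_{r_j}(\cdot)}\big)
\]
for every residual budget $k\ge1$. The base case $j=0$ is the tautology $e_k(\Ra)\le e_k(\Ra)$. For the step, I set $k=n_{j+1}+k'-1$ and apply the first inequality of \eqref{split} to $R^{\al_{r_j}(\cdot)}$ split at $r_{j+1}$: this peels off the $(j+1)$--st summand $c_1\,r_{j+1}^{\al(r_j)+1/q-1/p}n_{j+1}^{-\al(r_j)}$ and leaves the residual $e_{k'}(R^{\al_{r_{j+1}}(\cdot)})$, while the index telescopes to $n_1+\cdots+n_{j+1}+k'-(j+1)$. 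Taking $j=m-1$ and $k=n_m$ gives the index $N-m+1$ on the left and the residual $e_{n_m}(R^{\al_{r_{m-1}}(\cdot)})$, which Proposition \ref{better} bounds by $c\,n_m^{-\al(r_{m-1})}=c\,r_m^{\al(r_{m-1})+1/q-1/p}n_m^{-\al(r_{m-1})}$ since $r_m=1$. Absorbing the two constants into one $c$ then yields \eqref{e_iter}, the last term being the $j=m$ summand.

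The step I expect to require the most care is the claim that $c$ is independent of $m$ and of the partition; this amounts to applying Proposition \ref{lem1} with a single constant to all truncations $\al_{r_0},\dots,\al_{r_{m-1}}$ at once. Here the self--similarity pays off again: every $\al_{r_j}$ is non--decreasing and satisfies $\al_0\le\al_{r_j}(t)\le\sup_t\al(t)=:b$, with infimum $\al(r_j)\ge\al_0>(1/p-1/q)_+$ and all split points $r_1,\dots,r_{m-1}$ lying in $(0,1)$. Thus for $p\le q$ the constant in \eqref{split} depending only on $b$ serves at every step, and for $q<p$ one fixes any $a\in((1/p-1/q)_+,\al_0]$ so that $a\le\al_{r_j}\le b$ furnishes one common constant; the same two--sided bound makes Proposition \ref{better} applicable uniformly to the final residual $R^{\al_{r_{m-1}}(\cdot)}$.
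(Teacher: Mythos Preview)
Your proposal is correct and follows essentially the same route as the paper: iterate the first inequality in \eqref{split} of Proposition \ref{lem1} along the partition points, peeling off one summand at a time while passing from $R^{\al_{r_{j-1}}(\cdot)}$ to $R^{\al_{r_j}(\cdot)}$, and close with Proposition \ref{better} at the last step using $r_m=1$. The paper carries out the iteration explicitly rather than framing it as a formal induction, but the index bookkeeping and the key identity $(\al_{r_{j-1}})_{r_j}=\al_{r_j}$ that you isolate are exactly what is used; your additional discussion of why the constant can be taken uniform in $m$ and in the partition (all truncations $\al_{r_j}$ stay non--decreasing with the same two--sided bounds, so a single instance of the constants in Proposition \ref{lem1} and Proposition \ref{better} suffices) makes explicit a point the paper leaves implicit.
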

\begin{proof}
An application of Proposition \ref{lem1} with $r_1$ and for $n_1$ and $\tilde n_2 =\sum_{j=2}^m n_j -m +2$ gives
\be
\label{it1}
e_{N-m+1}(\Ra)=e_{n_1+\tilde n_2-1}(\Ra)\le c\, r_1^{\al(0)+1/q-1/p}\,n_1^{-\al(0)} + e_{\tilde n_2}(R^{\al_{r_1}(\cdot)})\,.
\ee
Recall that $\al_{r_1}(t)=\al (r_1)$ if $0\le t\le r_1$ and $\al_{r_1}(t)=\al(t)$ whenever $r_1\le t\le 1$.

Next we apply again Proposition \ref{lem1} , yet this time with $r_2$ and for $R^{\al_{r_1}(\cdot)}$. Define $\tilde n_3$ by
$\tilde n_3=\sum_{j=3}^m n_j - m +3$.
If $\al_{r_2}$ is given by $\al_{r_2}(t)=\al_{r_1}(r_2)=\al(r_2)$ whenever $0\le t\le r_2$ and $\al_{r_2}(t)=\al(t)$ otherwise, then we get
\be
\label{it2}
e_{\tilde n_2}(R^{\al_{r_1}(\cdot)})=e_{n_2+\tilde n_3-1}(R^{\al_{r_1}(\cdot)})
\le c\, r_2^{\al(r_1)+1/q-1/p}\,n_2^{-\al(r_1)} + e_{\tilde n_3}(R^{\al_{r_2}(\cdot)})\,.
\ee
Plugging \eqref{it2} into \eqref{it1} by $r_0=0$ we obtain
$$
e_{N-m+1}(\Ra)\le c\, r_1^{\al(r_0)+1/q-1/p}\,n_1^{-\al(r_0)} + c\, r_2^{\al(r_1)+1/q-1/p}\,n_2^{-\al(r_1)} + e_{\tilde n_3}(R^{\al_{r_2}(\cdot)})\,.
$$
Proceeding further we end up with
\be
\label{iter}
e_{N-m+1}(\Ra)\le c\,\sum_{j=1}^{m-1}  r_j^{\al(r_{j-1})+1/q-1/p}\,n_j^{-\al(r_{j-1})} + e_{n_m}(R^{\al_{m}(\cdot)})\,.
\ee
But Proposition \ref{better} yields  (recall $r_m=1$)
$$
e_{n_m}(R^{\al_{m}(\cdot)})\le c\, n_m^{-\al(r_{m-1})} = c\, r_m^{\al(r_{m-1})+1/q-1/p}\, n_m^{-\al(r_{m-1})}\,.
$$
Plugging this into \eqref{iter} completes the proof.
\end{proof}

\subsection{Lower bounds}
The basic aim of this subsection is to prove the counterpart of Proposition \ref{lem1} in the case
that $\al(\cdot)$ is bounded from above.
\begin{prop}
\label{lower}
Assume that $\sup_{0\le t\le r} \al(t) \le \al_1$ for some $0<r\le 1$. Then for $n\in\N$ it follows that
\be
\label{lower1}
 e_n(\Ra:L_p[0,r]\to L_q[0,r] ) \ge C n^{-\al_1} r^{\al_1+1/q-1/p}
\ee
for some $C=C(\al_1,p,q)>0$ independent of $n$ and $r$.
\end{prop}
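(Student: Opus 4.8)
The plan is to establish \eqref{lower1} directly on $[0,r]$ by a volumetric (determinant) lower bound for entropy numbers, factoring a suitable finite--dimensional operator through $\Ra$. I would fix an integer $m$ comparable to $n$ (say $m=2n$), set $\delta:=r/m$, and let $U_j:=[(j-1)\delta,j\delta]$, $j=1,\dots,m$, be the resulting equidistant partition of $[0,r]$. The construction uses the isometry $\iota\colon\ell_p^m\to L_p[0,r]$ sending the $j$-th unit vector to $f_j:=\delta^{-1/p}\ind_{U_j}$ (so $\norm{f_j}_p=1$ and, by disjointness of supports, $\iota$ is isometric), together with the functionals $\phi_k(g):=\delta^{-1/q'}\int_{U_k}g$, which assemble to an operator $\Phi\colon L_q[0,r]\to\ell_q^m$ of norm at most one (again by disjointness and H\"older). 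Since $\norm{\iota}=1$ and $\norm{\Phi}\le1$, the $m\times m$ matrix $T:=\Phi\circ\Ra\circ\iota$ satisfies $e_n(\Ra)\ge e_n(T)$, so everything reduces to a lower bound for the finite matrix $T$.

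The decisive structural feature is that \emph{the output is measured on the very same intervals as the input}; this is what makes the argument survive when $\al(\cdot)$ is close to zero, where $\Ra$ is close to the identity and an image $\Ra f_j$ essentially still lives on $U_j$. Because $\Ra f_j$ vanishes on $[0,(j-1)\delta]$, the matrix $T=(T_{kj})$, $T_{kj}=\phi_k(\Ra f_j)$, is triangular: $T_{kj}=0$ whenever $k<j$. Hence $\det T=\prod_{j=1}^m T_{jj}$, and it remains to bound the diagonal from below. A direct computation (with the substitution $w=t-(j-1)\delta$) gives $T_{jj}=\delta^{-1/q'-1/p}\int_0^\delta w^{\al(w)}/\Gamma(\al(w)+1)\,\d w$. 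Using $w\le\delta\le1$ together with $\al(w)\le\al_1$ one has $w^{\al(w)}\ge w^{\al_1}$, while $\Gamma(\al(w)+1)\le\Gamma_{\max}:=\max_{1\le y\le\al_1+1}\Gamma(y)<\infty$; therefore $T_{jj}\ge c_2\,\delta^{\al_1+1/q-1/p}$ with $c_2=\big((\al_1+1)\Gamma_{\max}\big)^{-1}$, uniformly in $j$ and independently of $\al(\cdot)$, since $-1/q'-1/p+(\al_1+1)=\al_1+1/q-1/p$.

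It then follows that $|\det T|^{1/m}\ge c_2\,\delta^{\al_1+1/q-1/p}$, and the standard volumetric lower bound for dyadic entropy numbers (covering $T B_{\ell_p^m}$ by $2^{n-1}$ translates of $\eps B_{\ell_q^m}$ and comparing volumes) gives $e_n(T)\ge |\det T|^{1/m}\big(\mathrm{vol}(B_{\ell_p^m})/\mathrm{vol}(B_{\ell_q^m})\big)^{1/m}2^{-(n-1)/m}$. A Stirling estimate shows $\big(\mathrm{vol}(B_{\ell_p^m})/\mathrm{vol}(B_{\ell_q^m})\big)^{1/m}\ge c_{p,q}\,m^{1/q-1/p}$ for some $c_{p,q}>0$ uniform in $m$. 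Recalling $\delta=r/m$, the powers of $m$ combine as $m^{1/q-1/p}\cdot m^{-(\al_1+1/q-1/p)}=m^{-\al_1}$, leaving $e_n(T)\ge c_{p,q}c_2\,r^{\al_1+1/q-1/p}\,m^{-\al_1}\,2^{-(n-1)/m}$. Choosing $m=2n$ bounds $2^{-(n-1)/m}$ below by an absolute constant and turns $m^{-\al_1}$ into a constant multiple of $n^{-\al_1}$, which is exactly \eqref{lower1}.

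The hard part will be the two uniformity statements rather than the algebra. First, the diagonal estimate must hold for \emph{all} admissible $\al(\cdot)\le\al_1$ simultaneously, and the alignment of output and input intervals is precisely what removes the otherwise fatal degeneration as $\al(\cdot)\to0$: a mismatched output interval would force $T_{jj}\to0$ in this limit and collapse the determinant. Second, the volume--ratio constant $c_{p,q}$ must be controlled uniformly in the dimension $m$ regardless of the sign of $1/q-1/p$, and the extreme cases $p=\infty$ or $q=\infty$ have to be handled separately with the conventions $1/p=0$, $\mathrm{vol}(B_{\ell_\infty^m})=2^m$, and the isometry $\ell_\infty^m\to L_\infty[0,r]$; these are routine but should be checked.
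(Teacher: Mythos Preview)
Your approach is essentially the same as the paper's: factor a triangular $m\times m$ matrix through $\Ra$ via an isometric embedding $\ell_p^m\hookrightarrow L_p[0,r]$ and an averaging projection $L_q[0,r]\to\ell_q^m$ built on the same equidistant partition, bound the diagonal entries from below using $\al\le\al_1$, and finish with the volume comparison (the paper simply takes $m=n$ rather than $m=2n$, which is immaterial). One small slip: after the substitution $w=t-(j-1)\delta$ the integrand is $w^{\al((j-1)\delta+w)}/\Gamma(\al((j-1)\delta+w)+1)$, not $w^{\al(w)}/\Gamma(\al(w)+1)$; your subsequent estimate is unaffected since only the hypothesis $\al\le\al_1$ on $[0,r]$ is used.
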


\begin{proof}
We fix $n$ and split $[0,r]$ as
\[
    [0,r]= \bigcup_{j=1}^n I_j, \qquad I_j:=\left[\frac{(j-1)r}{n},\frac{jr}{n}\right].
\]
Introduce the related bases
\[
 \phi_{j,p}:=(n/r)^{1/p}\, \ind_{I_j}, \quad  \phi_{j,q}:=(n/r)^{1/q}\, \ind_{I_j}.
\]
Than we can identify $\ell_p^n$ with $\mathrm{span}\big((\phi_{j,p})_{j=1}^n\big)\subset L_p[0,r] $
and  $\ell_q^n$ with $\mathrm{span}\big((\phi_{j,q})_{j=1}^n\big)\subset L_q[0,r]$.
We also need the averaging operator
\[
   A_n:L_q[0,r]\to \ell_q^n
\]
acting by
\[
   (A_n g) (s):= \frac n r\, \int_{I_j}g(t) \d t, \qquad s\in I_j.
\]
By using $||A_n||\le 1$ we make the first estimate
\be \label{lower2}
    e_n(\Ra:L_p[0,r]\to L_q[0,r] )
    \ge
    e_n(A_n\Ra:L_p[0,r]\to \ell_q^n )
    \ge
    e_n(A_n\Ra: \ell_p^n  \to \ell_q^n ).
\ee
Now we arrived to an operator acting in $n$--dimensional Euclidean space through a triangular matrix
$\sigma:=(\sigma_{ij})_{i,j=1}^n$, i.e.
\[
  A_n\Ra \phi_{j,p}:= \sum_{i=1}^n \sigma_{ij}  \phi_{i,q}
\]
where $\sigma_{ij}=0$ whenever $i<j$ and
\[
  \sigma_{jj}
  = (n/r)^{-1/q} \cdot (n/r)\cdot (n/r)^{1/p} \int_{I_j}
  \frac{1}{\Gamma(\al(t))}\int_{\frac{(j-1)r}{n}}^{t}  (t-u)^{\al(t)-1}\d u\,\d t \,.
\]
We are not interested in evaluation of  $\sigma_{ij}$ whenever $i>j$.
Note that
\begin{eqnarray*}
\sigma_{jj}
  &=& (n/r)^{-1/q+1+1/p} \int_{I_j}
  \frac{1}{\Gamma(\al(t)+1)}   \left( t-\tfrac{(j-1)r}{n}\right)^{\al(t)}\d t
\\
&\ge&
 (n/r)^{-1/q+1+1/p}
  \ \frac{1}{C_1} \cdot \frac{r}{2n}\cdot   \left( \frac{r}{2n}\right)^{\al_1}
\\
&=&
 C_2\  (n/r)^{-1/q+1/p -\al_1}\,,
\end{eqnarray*}
where
\[
  C_1:= \max_{0\le t\le 1}\Gamma(\al(t)+1) \le \max_{1\le a\le \al_1+1}\Gamma(a)
  =\max\{1, \Gamma(\al_1+1)\}.
\]
Now we apply the volumic argument. By the triangular nature of the matrix
$\sigma$ we see that for any Borel set $D\subset \ell_p^n$ it is true that
\[
   \mathrm{vol}_n\left(A_n\Ra(D)\right) \ge \left( C_2\  (n/r)^{-1/q+1/p -\al_1}\right)^n \mathrm{vol}_n(D).
\]
Apply this to $D=B_p^n$, the unit ball of $\ell_p^n$. Assuming that its image
$A_n\Ra(B_p^n)$ is covered by $2^n$ balls of radius $\eps>0$ in $\ell_q^n$ we
get a volumic inequality
\begin{eqnarray*}
\mathrm{vol}_n(B_p^n) &\le&   \left( C_2\  (n/r)^{-1/q+1/p -\al_1}\right)^{-n}  \mathrm{vol}_n\left(A_n\Ra(B_p^n)\right)
\\
 &\le&   \left( C_2\  (n/r)^{-1/q+1/p -\al_1}\right)^{-n}\  2^n \ \eps^n\ \mathrm{vol}_n\left(B_q^n\right).
\end{eqnarray*}
It follows that
\[
 \eps
 \ge \left( \frac{\mathrm{vol}_n\left(B_p^n\right)}{\mathrm{vol}_n\left(B_q^n\right)}
 \right)^{1/n} \ \frac{C_2}{2}\  (n/r)^{-1/q+1/p -\al_1},
\]
By letting $\eps\searrow e_{n+1}(A_n\Ra)$ we also obtain
\[
 e_{n+1}(A_n\Ra)
 \ge \left( \frac{\mathrm{vol}_n\left(B_p^n\right)}{\mathrm{vol}_n\left(B_q^n\right)}
 \right)^{1/n} \ \frac{C_2}{2}\  (n/r)^{-1/q+1/p -\al_1},
\]
Given that (see e.g. \cite{Hu})
\[
 \mathrm{vol}_n\left(B_q^n\right)^{1/n}
  = \frac{2\Gamma\left(1+\frac 1q\right)}{\Gamma\left(\frac nq +1\right)^{1/n}}
  \approx n^{-1/q},
  \quad \textrm{resp.}\quad
  \mathrm{vol}_n\left(B_p^n\right)^{1/n} \approx n^{-1/p},
\]
we obtain the bound
\[
  e_n(A_n\Ra)\ge e_{n+1}(A_n\Ra)\ge  c\ n^{ -\al_1}\,r^{\al_1+1/q-1/p}\,.
\]
It remains to merge it with \eqref{lower2}, and we obtain
the desired bound \eqref{lower1}.
\end{proof}

\begin{remark}
The idea to prove lower entropy bounds by volume estimates for triangular matrices was already used in \cite{L08} for the case $p=2$ and $q=\infty$.
\end{remark}

\section{Examples}
\setcounter{equation}{0}
\label{Exa}


\noindent
{\bf Example 1.} Consider the function
\be
\label{ex1}
    \al(t)=\al_0 +\lambda t^\gamma, \qquad 0\le t\le 1\,,
\ee
for some $\lambda,\gamma>0$ and $\al_0>(1/p-1/q)_+$.
This is the most typical type of behavior around a single critical point.
It was studied, for example, in \cite{DK,HLS}. Here we will prove the following:

\begin{prop}
Let $\al(\cdot)$ be as in $\eqref{ex1}$. Then
there are constants $c,C>0$ such that
\be
\label{ex1a}
\frac{c\,n^{-\al_0}}{(\ln n)^{(\al_0 +1/q-1/p)/\gamma}}\le
 e_n(\Ra : L_p[0,1]\to L_q[0,1]) \le \frac{C\,n^{-\al_0}}{(\ln n)^{(\al_0 +1/q-1/p)/\gamma}}\,.
\ee
\end{prop}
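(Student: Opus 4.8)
The plan is to establish the two inequalities in \eqref{ex1a} separately, matching the exponent $\kappa/\gamma$ on both sides, where I abbreviate $\kappa:=\al_0+1/q-1/p$; note $\kappa>0$ precisely because $\al_0>(1/p-1/q)_+$. The lower bound will follow by localizing the operator to a small interval $[0,r]$ on which $\al(\cdot)$ is still essentially $\al_0$ and invoking Proposition \ref{lower}, while the upper bound will come from Corollary \ref{co1} applied to a partition whose mesh is tuned to the logarithmic scale (this uses that $\al(\cdot)$ is non-decreasing, as required there; I assume throughout the range of $p,q$ in which the cited results apply).

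For the lower bound I would first observe that $\Ra$ on $[0,r]$ equals $P_{[0,r]}\circ\Ra\circ E$, with $E$ the isometric extension by zero and $P_{[0,r]}$ the restriction; by the ideal property this yields $e_n(\Ra:L_p[0,1]\to L_q[0,1])\ge e_n(\Ra:L_p[0,r]\to L_q[0,r])$. Since $\al(\cdot)$ is increasing, $\sup_{0\le t\le r}\al(t)=\al_0+\lambda r^\gamma=:\al_1$, so Proposition \ref{lower} gives $e_n(\Ra)\ge C(\al_1)\,n^{-\al_1}\,r^{\al_1+1/q-1/p}$. Choosing $r=(c_0/\ln n)^{1/\gamma}$ for a fixed constant $c_0$ makes $\al_1=\al_0+\lambda c_0/\ln n\to\al_0$, whence $n^{-\al_1}=n^{-\al_0}e^{-\lambda c_0}$ and $r^{\al_1+1/q-1/p}=(c_0/\ln n)^{\kappa/\gamma}(1+o(1))$, producing the claimed factor $n^{-\al_0}(\ln n)^{-\kappa/\gamma}$. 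The only point to check is that $C(\al_1)$ stays bounded below as $\al_1\to\al_0$; this holds because in the proof of Proposition \ref{lower} the constant depends on $\al_1$ only through $\max\{1,\Gamma(\al_1+1)\}$ and the volume ratios, all uniform for $\al_1$ in the compact range $[\al_0,\al_0+\lambda]$.

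For the upper bound I would apply Corollary \ref{co1} with $m\approx\ln n$ pieces, choosing the partition points $r_k$ so that $\al(r_{k-1})=\al_0+(k-1)\lambda/m$ increases arithmetically from $\al_0$ to $\al_0+\lambda$; this forces $r_{k-1}=((k-1)/m)^{1/\gamma}$, so the first piece $[0,r_1]$ has length of order $(\ln n)^{-1/\gamma}$. The budgets $n_k$ are then allocated so that the summands in \eqref{e_iter} decay geometrically in $k$: writing $\al(r_{k-1})=\al_0+(k-1)\theta$ with $\theta\approx\lambda/\ln n$, one finds that the balanced choice $n_k\approx N\,e^{-c(k-1)}$ (with $c\approx\lambda/\al_0$) makes $\sum_k n_k=O(N)$ while the leading ($k=1$) term equals $r_1^{\kappa}N^{-\al_0}\approx N^{-\al_0}(\ln N)^{-\kappa/\gamma}$. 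Summing the resulting convergent geometric series in \eqref{e_iter} then gives $e_N(\Ra)\le C\,N^{-\al_0}(\ln N)^{-\kappa/\gamma}$, as desired; the uniformity of the constant in Proposition \ref{better} over $\al(r_{k-1})\in[\al_0,\al_0+\lambda]$ (fix $a\in((1/p-1/q)_+,\al_0)$ and $b=\al_0+\lambda$) is what keeps the constant in Corollary \ref{co1} harmless.

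The main obstacle is exactly this balancing in the upper bound. A naive two-piece split $[0,\delta]\cup[\delta,1]$ with $\delta\approx(\ln N)^{-1/\gamma}$ already reproduces the correct first term $\delta^{\kappa}N^{-\al_0}$, but on $[\delta,1]$ the infimal order exceeds $\al_0$ only by $\lambda\delta^\gamma\approx1/\ln N$, so the second piece contributes $N^{-\al_0-\lambda\delta^\gamma}\approx e^{-\lambda}N^{-\al_0}$ and destroys the logarithmic gain; pushing $\delta$ further to repair this reintroduces a spurious $(\ln\ln N)$ factor. The purpose of the multi-scale partition is to let the order grow gradually so that the contributions past the critical core form a geometric series and the $m\approx\ln N$ pieces are reabsorbed by the geometric allocation of the $n_k$. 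Verifying that these $m$ extra summands and the budget constraint $\sum_k n_k=O(N)$ conspire to give precisely the power $\kappa/\gamma$, with no residual logarithmic factors, is the delicate computational heart of the argument.
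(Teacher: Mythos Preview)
Your approach is essentially the paper's: the lower bound via Proposition \ref{lower} with $r\asymp(\ln n)^{-1/\gamma}$ is identical, and for the upper bound you invoke the same iteration (Corollary \ref{co1}) with the same partition $r_j=(j/m)^{1/\gamma}$, $m\approx\ln n$. The only difference is the budget allocation: the paper takes $n_j=[n/j^2]$, a polynomial decay, so that the exponential factor $n^{-\al(r_{j-1})}=n^{-\al_0}e^{-\lambda(j-1)}$ simply swallows the polynomial growth $(2j^2)^{\al_1}j^{\kappa/\gamma}$ in the sum; you propose the geometric allocation $n_k\approx N e^{-c(k-1)}$.

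Your geometric scheme works, but not with your stated constant $c\approx\lambda/\al_0$. With that choice the exponent governing the $k$-th summand is $c\,\al(r_{k-1})-\lambda=\lambda(\al(r_{k-1})-\al_0)/\al_0\ge 0$, so the terms do not decay; in fact the tail carries a factor $\exp\{(k-1)^2\lambda^2/(\al_0 m)\}$, which for $k\approx m\approx\ln N$ is a positive power of $N$ and destroys the estimate. The easy fix is to take any $c<\lambda/\al_1=\lambda/(\al_0+\lambda)$: then $c\,\al(r_{k-1})-\lambda\le c\al_1-\lambda<0$ uniformly, the series is genuinely geometric, and one still has $n_k\ge N e^{-cm}\ge N^{1-c}\ge 1$ together with $\sum_k n_k=O(N)$. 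The paper's polynomial allocation sidesteps this tuning issue entirely.
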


\begin{proof}
Let us start with proving the right hand estimate in \eqref{ex1a}. To this end
we will apply the iterative bound \eqref{e_iter} for estimating the entropy numbers of $\Ra$.
For $n\ge 3$ let $m:=1+[\ln n]$ and set
$$
r_j:=\left\{
\begin{array}{ccc}
\big(\frac{j}{\ln n}\big)^{1/\gamma} &: & 0\le j\le m-1\\
1 &:& j=m
\end{array}
\right.
$$
as well as
\[
  n_j := \left[ \frac{n}{j^2}\right],  \qquad 1\le j\le m.
\]
Clearly, we have
\[
  \sum_{j=1}^m  n_j \le \frac{\pi^2}{6}\, n \le 2n.
\]
Notice also that $n\ge 3$ yields
$n_j=[n/j^2]\ge [n/m^2] = \left[n/(1 + \ln n )^2\right] \ge 1$.

Now we start the evaluation of each term of the sum \eqref{e_iter}.
Because of $r_j\le 1$ we get
$$
  r_j^{\al(r_{j-1})+1/q-1/p}
  \le r_j^{\al_0+1/q-1/p}\le \frac{j^{(\al_0+1/q-1/p)/\gamma}}{(\ln n)^{(\al_0+1/q-1/p)/\gamma}}
\,,\quad 1\le j \le m\,.
$$
On the other hand, with $\al_1:=\sup_{0\le t\le 1}\al(t)=\al_0+\lambda $ we have
\beaa
    n_j^{-\al(r_{j-1})}
    &\le& \left(\frac{n}{2 j^2}\right)^{-\al(r_{j-1})}
    \le (2 j^2)^{\al_1}\,n^{-\al(r_{j-1})}
    \\
    &=& (2 j^2)^{\al_1}\,n^{-\al_0}\, n^{-\lambda (j-1)/\ln n}
    = (2 j^2)^{\al_1}\,n^{-\al_0}\,\ex^{-\lambda (j-1)}\,.
\eeaa
By summing up the bounds it follows that
\beaa
   \sum_{j=1}^m r_j^{\al(r_{j-1})+1/q-1/p}\,n_j^{-\al(r_{j-1})}
   &\le&
   \frac{n^{-\al_0}}{(\ln n)^{(\al_0+1/q-1/p)/\gamma}}\,
   \sum_{j=1}^m (2 j^2)^{\al_1}j^{(\al_0+1/q-1/p)/\gamma}\,\ex^{-\lambda (j-1)}
   \\
   &\le& \frac{C_1\, n^{-\al_0}}{(\ln n)^{(\al_0+1/q-1/p)/\gamma}}
\eeaa
where
\beaa
   C_1&=& C_1(\al_0,\lambda ,\gamma)
   := \sum_{j=1}^\infty (2 j^2)^{\al_1}j^{(\al_0+1/q-1/p)/\gamma}\,\ex^{-\lambda (j-1)}\\
   &=&  2^{\al_0+\lambda}
      \sum_{j=1}^\infty j^{2\al_0+ 2\lambda +(\al_0+1/q-1/p)/\gamma} \, \ex^{-\lambda (j-1)}
\,.
\eeaa
Finally recall that \eqref{e_iter} yields
\[
   e_{N-m+1}(\Ra)
   \le c\,\sum_{j=1}^{m} r_j^{\al(r_{j-1})+1/q-1/p}\, n_j^{-\al(r_{j-1})}
\]
where $N=\sum_{j=1}^m n_j$. In our case we have $N-m+1\le 2 n$, hence
it follows
$$
   e_{2 n}(\Ra)
   \le e_{N-m+1}(\Ra)\le  \frac{C_2 \, n^{-\al_0}}{(\ln n)^{(\al_0+1/q-1/p)/\gamma}}
$$
implying
$$
   e_{n}(\Ra)
   \le   \frac{C \, n^{-\al_0}}{(\ln n)^{(\al_0+1/q-1/p)/\gamma}}
$$
where $C>0$ depends on $\lambda,\gamma$ and $b$ whenever $0<\al_0\le b$.
\medskip

Next we prove the lower estimate in \eqref{ex1a}. Our aim is to apply \eqref{lower1} with
$
r:=\frac{1}{(\ln n)^{1/\gamma}}
$
and $\al_1=\al(r)$.
Since
$$
\al(r)=\al_0 + \frac{\lambda}{\ln n}
$$
it follows
$$
n^{-\al(r)}= \ex^{-\lambda}\,n^{-\al_0}
$$
while
$$
r^{\al(r)+1/q-1/p}= \frac{1}{(\ln n)^{(\al_0+1/q-1/p)/\gamma}}\,
\left(\frac{1}{\ln n}\right)^{\frac{\lambda}{\gamma\,\ln n}}
\ge \frac 1 2\, \frac{1}{(\ln n)^{(\al_0+1/q-1/p)/\gamma}}
$$
for $n$ sufficiently large. Thus Proposition \ref{lower} leads to
\beaa
  \lefteqn{e_n(\Ra : L_p[0,1]\to L_q[0,1])
  \ge e_n(\Ra : L_p[0,r]\to L_q[0,r])}\\&\ge& C\, r^{\al(r)+1/q-1/p}\,n^{-\al(r)}
   \ge \frac{c\,n^{-\al_0}}{(\ln n)^{(\al_0+1/q-1/p)/\gamma}}
\eeaa
as asserted. This completes the proof.
\end{proof}
\medskip

\begin{remark}
Using estimate \eqref{split} in the proof of the upper bound in \eqref{ex1a}  instead of
\eqref{e_iter}, we only get the weaker
$$
    e_n(\Ra) \le C\,  n^{-\al_0} \
    \left( \frac{\ln\ln n } {\ln n}\right )^{(\al_0+1/q-1/p)/\gamma}\,.
$$
This shows that the iteration formula \eqref{e_iter} is in fact necessary
to obtain the right order.
\end{remark}
\medskip

\noindent
{\bf Example 2.}
Consider the function
\be
\label{ex2}
     \al(t)=\al_0+ \lambda |\ln t|_+^{-\gamma}, \qquad 0\le t\le 1\,,
\ee
for some $\lambda,\gamma>0$ and $\al_0>0$.
Here we will prove the following:

\begin{prop}
Let $\al(\cdot)$ be as in $\eqref{ex2}$. Then
there are constants $c,C>0$ such that
\begin{eqnarray}
\nonumber
 \lefteqn{c\,   n^{-\alpha(0)}
\exp \left\{ -  \alpha_0^{\gamma/(1+\gamma)}
\frac{\gamma+1}{\gamma^{\gamma/(1+\gamma)}}
(\lambda \ln n)^{1/(1+\gamma)}(1+o(1))  \right\}}\\
&\le&  e_n(\Ra : L_p[0,1]\to L_p[0,1])
\label{ex2u}
   \le C \, n^{-\al_0}
    \exp\left\{-\al_0^{\gamma/(1+\gamma)}(\lambda \ln n)^{1/(1+\gamma)}\right\}.
\end{eqnarray}
\end{prop}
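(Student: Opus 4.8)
The plan is to read off both inequalities in \eqref{ex2u} from the general machinery already established, choosing in each case the free scale $r=r(n)$ so as to balance the two competing contributions. Since here $p=q$, every occurrence of $1/q-1/p$ vanishes. For the upper bound I would apply the two--term splitting of Proposition \ref{lem1} (note that $\al(\cdot)$ is non--decreasing and $\al(0)=\al_0>0$) with $n_1=n_2=n$ and
\[
   r:=\exp\{-(\lambda\ln n/\al_0)^{1/(1+\gamma)}\}.
\]
Writing $u:=|\ln r|=(\lambda\ln n/\al_0)^{1/(1+\gamma)}$, the first term of that estimate is $r^{\al_0}n^{-\al_0}=n^{-\al_0}\exp\{-\al_0 u\}$ and the second is $n^{-\al(r)}=n^{-\al_0}\exp\{-\lambda u^{-\gamma}\ln n\}$. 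A direct computation shows $\al_0 u=\lambda u^{-\gamma}\ln n=\al_0^{\gamma/(1+\gamma)}(\lambda\ln n)^{1/(1+\gamma)}$, so the chosen $u$ is exactly the point where the spatial decay and the order--increase decay balance; both terms then equal $n^{-\al_0}\exp\{-\al_0^{\gamma/(1+\gamma)}(\lambda\ln n)^{1/(1+\gamma)}\}$. Hence $e_{2n-1}(\Ra)\le 2c_2\,n^{-\al_0}\exp\{-\al_0^{\gamma/(1+\gamma)}(\lambda\ln n)^{1/(1+\gamma)}\}$, and for arbitrary $N$ one chooses $n=\lfloor(N+1)/2\rfloor$ so that $2n-1\le N$ and $e_N\le e_{2n-1}$; since then $n\asymp N$, replacing $\ln n$ by $\ln N$ alters the subexponential factor only by $1+o(1)$, which is absorbed into $C$. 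This gives the right--hand inequality.

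For the lower bound I would restrict to $[0,r]$ and invoke Proposition \ref{lower} with $\al_1=\al(r)=\sup_{0\le t\le r}\al(t)$ (using monotonicity of $\al$), obtaining
\[
   e_n(\Ra)\ge e_n(\Ra:L_p[0,r]\to L_p[0,r])\ge C\,n^{-\al(r)}r^{\al(r)}
   =C\,\exp\{-\al(r)(\ln n+|\ln r|)\}.
\]
The task is then to choose $u=|\ln r|$ so as to minimise $\Phi(u):=(\al_0+\lambda u^{-\gamma})(\ln n+u)$. The leading balance $\al_0\approx\gamma\lambda(\ln n)u^{-\gamma-1}$ locates the minimiser at $u_\ast=(\gamma\lambda\ln n/\al_0)^{1/(1+\gamma)}$. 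Substituting back, the leading term is $\al_0\ln n$, and the two first--order corrections $\al_0u_\ast$ and $\lambda(\ln n)u_\ast^{-\gamma}$ carry coefficients $\al_0^{\gamma/(1+\gamma)}\gamma^{1/(1+\gamma)}$ and $\al_0^{\gamma/(1+\gamma)}\gamma^{-\gamma/(1+\gamma)}$ of $(\lambda\ln n)^{1/(1+\gamma)}$; by the identity $\gamma^{1/(1+\gamma)}+\gamma^{-\gamma/(1+\gamma)}=(\gamma+1)\gamma^{-\gamma/(1+\gamma)}$ they combine into $\frac{\gamma+1}{\gamma^{\gamma/(1+\gamma)}}\al_0^{\gamma/(1+\gamma)}(\lambda\ln n)^{1/(1+\gamma)}$. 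The remaining term $\lambda u_\ast^{1-\gamma}$ is of order $(\ln n)^{(1-\gamma)/(1+\gamma)}=o((\ln n)^{1/(1+\gamma)})$ and is swallowed by the factor $(1+o(1))$, which reproduces the left--hand inequality. The constant in Proposition \ref{lower} may be taken uniform because $\al(r)\to\al_0$ stays in a fixed compact range.

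The only genuine analytic content is this Laplace--type optimisation of $\Phi(u)$ for the lower bound — in particular the recombination of the two corrections into the asserted constant — together with the routine bookkeeping of lower--order terms and the index shift $e_{2n-1}\to e_N$; I expect no real obstacle beyond this. It is worth stressing, however, that the balancing scale for the upper bound, $u\sim(\lambda\ln n/\al_0)^{1/(1+\gamma)}$, differs from the minimising scale for the lower bound, $u_\ast\sim(\gamma\lambda\ln n/\al_0)^{1/(1+\gamma)}$. This is exactly why the subexponential constants $\al_0^{\gamma/(1+\gamma)}$ and $\frac{\gamma+1}{\gamma^{\gamma/(1+\gamma)}}\al_0^{\gamma/(1+\gamma)}$ do not coincide, so the present two--sided argument leaves a genuine gap that only a sharper method could hope to close.
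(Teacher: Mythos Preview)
Your proposal is correct and follows exactly the paper's approach: for the upper bound you apply Proposition~\ref{lem1} with $\ln r=-(\lambda\ln n/\al_0)^{1/(1+\gamma)}$, and for the lower bound Proposition~\ref{lower} with $\ln r=-(\gamma\lambda\ln n/\al_0)^{1/(1+\gamma)}$, which are precisely the choices the paper makes. Your write-up simply supplies the explicit balancing computations and the recombination identity $\gamma^{1/(1+\gamma)}+\gamma^{-\gamma/(1+\gamma)}=(\gamma+1)\gamma^{-\gamma/(1+\gamma)}$ that the paper leaves to the reader, and your closing observation about the mismatch of subexponential constants anticipates the paper's own remark on the gap.
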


\begin{proof}
By choosing $\ln r :=- \left( \frac{\lambda \ln n}{\al_0}\right)^{1/(1+\gamma)}$
in \eqref{split} for $n$ we obtain the upper bound in
\eqref{ex2u}.
By choosing
$\ln r := -  \left( \frac{\gamma  \lambda \ln n}{\alpha_0}\right)^{1/(1+\gamma)}$
in \eqref{lower1} we obtain the lower bound in \eqref{ex2u}.
\end{proof}

\begin{remark}
The degree of $\ln n$ under the exponents in \eqref{ex2u} is the same for
for the lower and the upper bound, but the constants are not. It is possible
that this gap can be bridged by using  more delicate estimates like
\eqref{e_iter} and analogous refinements of \eqref{lower1}.
\end{remark}
\medskip

\noindent
{\bf Example 3.} Consider the function
\be
\label{ex3}
     \al(t):=\al_0 + \exp\{- \lambda t^{-\gamma} \}, \qquad 0\le t\le 1\,,
\ee
for some $\lambda,\gamma>0$ and $\al_0>0$.
Here we will prove the following:

\begin{prop}
Let $\al(\cdot)$ be as in $\eqref{ex3}$. Then
there are constants $c,C>0$ such that
\be
\label{ex3a}
   c\,  n^{-\al_0}\left(\ln\ln n\right)^{-\al_0/\gamma}
   \le  e_n(\Ra : L_p[0,1]\to L_p[0,1])
   \le  C\,  n^{-\al_0}\left(\ln\ln n\right)^{-\al_0/\gamma}.
\ee
\end{prop}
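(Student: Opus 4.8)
The plan is to derive both estimates in \eqref{ex3a} from a single dyadic scale $r=r_n\asymp(\ln\ln n)^{-1/\gamma}$, using the splitting bound \eqref{split} of Proposition \ref{lem1} for the upper estimate and the volume bound \eqref{lower1} of Proposition \ref{lower} for the lower one. Two preliminary observations make this possible. First, the function in \eqref{ex3} is strictly increasing with $\al(0)=\al_0$, so Proposition \ref{lem1} applies (for $1<p\le\infty$) and $\sup_{0\le t\le r}\al(t)=\al(r)$. Second, since we work with $p=q$ we have $1/q-1/p=0$, so the weights $r^{\al+1/q-1/p}$ occurring in \eqref{split} and \eqref{lower1} reduce to the simpler $r^{\al}$.

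For the upper bound I would apply \eqref{split} with $n_1=n_2=n$ and with $r=r_n:=(2\lambda/\ln\ln n)^{1/\gamma}$, i.e.\ with $\lambda r_n^{-\gamma}=\tfrac12\ln\ln n$. Then the first term of \eqref{split} becomes
$$
 r_n^{\al_0}\,n^{-\al_0}=(2\lambda)^{\al_0/\gamma}\,(\ln\ln n)^{-\al_0/\gamma}\,n^{-\al_0},
$$
which is already of the claimed order. The second term is harmless: since $\al(r_n)-\al_0=\ex^{-\lambda r_n^{-\gamma}}=(\ln n)^{-1/2}$, one gets $n^{-\al(r_n)}=n^{-\al_0}\,\ex^{-\sqrt{\ln n}}$, which is far smaller than $n^{-\al_0}(\ln\ln n)^{-\al_0/\gamma}$ for large $n$. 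Hence $e_{2n-1}(\Ra)\le C\,n^{-\al_0}(\ln\ln n)^{-\al_0/\gamma}$, and a routine reindexing (the factors $n^{-\al_0}$ and $(\ln\ln n)^{-\al_0/\gamma}$ are slowly varying) yields the right-hand inequality in \eqref{ex3a}.

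For the lower bound I would first use the monotonicity $e_n(\Ra:L_p[0,1]\to L_p[0,1])\ge e_n(\Ra:L_p[0,r]\to L_p[0,r])$, which holds because $\Ra$ on $[0,r]$ factors as the norm-one restriction $L_p[0,1]\to L_p[0,r]$ composed with $\Ra$ on $[0,1]$ composed with the norm-one extension-by-zero $L_p[0,r]\to L_p[0,1]$. Applying \eqref{lower1} on $[0,r]$ with $r=(\lambda/\ln\ln n)^{1/\gamma}$ and $\al_1=\al(r)$ (here $\lambda r^{-\gamma}=\ln\ln n$, so $\al(r)-\al_0=1/\ln n$ and $\al_1\le\al_0+1$ stays bounded, keeping $C(\al_1,p,q)$ uniform) gives
$$
 e_n(\Ra)\ge C\,n^{-\al(r)}\,r^{\al(r)}=C\,\ex^{-1}\,n^{-\al_0}\,r^{\al_0}\,r^{1/\ln n}.
$$
Since $r^{\al_0}=\lambda^{\al_0/\gamma}(\ln\ln n)^{-\al_0/\gamma}$ and $r^{1/\ln n}\to 1$, the right-hand side is $\ge c\,n^{-\al_0}(\ln\ln n)^{-\al_0/\gamma}$, which is the left-hand inequality in \eqref{ex3a}.

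The main subtlety is the choice of the scale $r$ in the upper bound. The second term $n^{-\al(r)}$ depends on $r$ through $\ln n\cdot\ex^{-\lambda r^{-\gamma}}$, so it is extremely sensitive to $r$: choosing $\lambda r^{-\gamma}=\ln\ln n$ exactly (the value natural for the lower bound) would only give $n^{-\al(r)}\asymp n^{-\al_0}$ and destroy the logarithmic gain, whereas taking $\lambda r^{-\gamma}=\theta\ln\ln n$ for a fixed $\theta\in(0,1)$ keeps the first term at the sharp order $(\ln\ln n)^{-\al_0/\gamma}\,n^{-\al_0}$ while making the second term negligible. This is also why, in contrast to the function \eqref{ex1}, the crude splitting \eqref{split} already produces the optimal exponent here and the iteration \eqref{e_iter} is not needed: the integration order $\al(\cdot)$ is so flat near its minimum that a single scale captures essentially the whole contribution. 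It remains only to verify the harmless facts that $r^{\,\ex^{-\lambda r^{-\gamma}}}\to 1$ and that the constants furnished by Propositions \ref{lem1} and \ref{lower} may be taken uniform over the relevant range of parameters.
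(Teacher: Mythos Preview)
Your argument is correct and follows the same strategy as the paper: apply \eqref{split} for the upper bound and \eqref{lower1} for the lower bound, each with a scale $r\asymp(\ln\ln n)^{-1/\gamma}$. Your lower-bound choice $r=\lambda^{1/\gamma}(\ln\ln n)^{-1/\gamma}$ coincides exactly with the paper's. For the upper bound the paper takes the slightly different
\[
   r=\lambda^{1/\gamma}\left(\ln\!\Big(\tfrac{\gamma\ln n}{\al_0\,\ln\ln\ln n}\Big)\right)^{-1/\gamma},
\]
which \emph{balances} the two terms in \eqref{split} (both come out of order $n^{-\al_0}(\ln\ln n)^{-\al_0/\gamma}$), whereas your choice $\lambda r^{-\gamma}=\tfrac12\ln\ln n$ keeps the first term at the sharp order and makes the second term $n^{-\al_0}\ex^{-\sqrt{\ln n}}$ negligible. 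Either choice yields the same bound; yours is arguably the cleaner one and your discussion of why the naive value $\lambda r^{-\gamma}=\ln\ln n$ fails in the upper estimate is exactly the point.
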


\begin{proof}
By choosing $r := \lambda^{1/\gamma}
\left( \ln\left(\frac{\gamma\ln n}{\al_0\ln\ln\ln n}\right)\right)^{-1/\gamma}$
 in \eqref{split} we obtain the upper bound in \eqref{ex3a}.
By choosing  $r:= \lambda^{1/\gamma}( \ln\ln n)^{-1/\gamma}$
in \eqref{lower1} we obtain the lower bound in \eqref{ex3a}.
\end{proof}

\medskip

\noindent
{\bf Example 4.}
In contrast to the previous examples, this one relies not on Proposition \ref{lem1}
and Corollary \ref{co1}, but on Proposition \ref{p1} and Theorem \ref{t1}, respectively.
Let $\al(\cdot)$ be a function on $(0,1]$ with $\inf_{\eps\le t\le 1}\al(t)>0$ for each $\eps>0$.
If
$$
\lim_{t\to 0} \al(t)\,|\ln t| =\infty\,,
$$
then Theorem \ref{t1} implies that the operator $\Ra$ is compact in $L_p[0,1]$, Moreover, by
$$
\Ra = P_{[0,r]}\,\Ra \,P_{[0,r]} + P_{[r,1]}\,\Ra
$$
it follows that
$$
e_n(\Ra)\le \|P_{[0,r]}\,\Ra \,P_{[0,r]}\| + e_n(P_{[r,1]}\,\Ra)\,.
$$
Proposition \ref{p1} tells us that $\|P_{[0,r]}\,\Ra \,P_{[0,r]}\|\le c\,\sup_{0<t\le r} (2 t)^{\al(t)}$. Suppose now that
$\al(\cdot)$ is non--decreasing. Under this assumption Proposition \ref{better} yields
$$
e_n(P_{[r,1]}\,\Ra)\le c\,n^{-\al(r)}\,.
$$
Summing up, we arrive at
\be
\label{enw}
e_n(\Ra)\le c\left(\sup_{0<t\le r} (2 t)^{\al(t)} + n^{-\al(r)}\right)
\ee
for each $0<r\le 1$.

For $0<\gamma<1$ regard now $\al(\cdot)$ defined by
$$
\al(t)=\left\{
\begin{array}{ccc}
|\ln t|^{-\gamma}&:& 0<t\le \ex^{-1}\\
1&:& \ex^{-1}\le t\le 1\,.
\end{array}
\right.
$$
Applying \eqref{enw} with $r=n^{-1}$ leads to
$$
e_n(R^{\al(\cdot)})\le c\, n^{-(\ln n)^{-\gamma}}= c\,\ex^{-(\ln n)^{1-\gamma}}\,.
$$
\bigskip

\noindent
\textbf{Final remarks:}
To the best of our knowledge, in this paper for the first time
continuity and compactness properties of fractional
integration operators with variable order are investigated. Thus it is quite natural that
some important questions remain open. Let us set up a list of the most interesting
ones.

\bee
\item
In view of Propositions \ref{better} and \ref{lower} the following question arises naturally.
Let $\al(\cdot)$ and $\beta(\cdot)$ be two measurable functions such that
$\al(t)\le \beta(t)$ for almost all $t\in[0,1]$. Suppose furthermore
$\al_0=\inf_{0\le t\le 1}\al(t)$ satisfies $\al_0>(1/p-1/q)_+$, hence
$\Ra$ is a compact operator from $L_p[0,1]$ into $L_q[0,1]$. Does this imply
$$
e_n(R^{\beta(\cdot)})\le c\,e_n(\Ra)
$$
with a certain constant $c>0$ only depending on $p$ and $q$ ?
\item
If $1/2<\al<3/2$, then the classical Riemann--Liouville operator $R^\al$ is
tightly related to the fractional Brownian motion $B_H$ where $H=\al-1/2$.
The link between these two objects is the integration operator
$V^\al : L_2(\R)\to L_\infty[0,1]$ defined by
$$
   (V^\al f)(t):= \frac{1}{\Gamma(\al)}\,
   \int_{-\infty}^0\left[(t-s)^{\al-1}-(-s)^{\al-1}\right]\,f(s)\d s\,,
   \quad 0\le t\le 1\,.
$$
More precisely, if $(f_k)_{k\ge 1}$ is an orthonormal bases in $L_2(\R)$, then
with $S^\al=R^\al+V^\al$ it holds
$$
   B_H(t)=c_H\,\sum_{k=1}^\infty \xi_k (S^\al f_k)(t)\,,\quad 0\le t\le 1\,,
$$
where $(\xi_k)_{k\ge 1}$ denotes a sequence of independent standard
normal random variables.

The crucial point in this link is that $V^\al$ has very strong compactness
properties. Namely, as shown in \cite{BL1} and \cite{BL2}, the entropy numbers
$e_n(V^\al)$ tend to zero exponentially. As a consequence, $R^\al$ and
$S^\al$ are quite similar with respect to their compactness properties.
Suppose now that $\al(\cdot)$ is a function with
$$
   1/2<\inf_{0\le t\le 1}\al(t)\le \sup_{0\le t\le 1} \al(t)<3/2\,.
$$
Then $V^{\al(\cdot)}$ is well--defined and one can prove that it is also
bounded as operator from $L_2(\R)$ into $L_\infty[0,1]$. Moreover, as $S^\al$
generates the fractional Brownian motion $B_H$ with $H=\al-1/2$, the operator
$S^{\al(\cdot)}:=\Ra + V^{\al(\cdot)}$ generates the so-called multi-fractional
Brownian motion $B_{H(\cdot)}$ with $H(t)=\al(t)-1/2$, see
\cite{ACL,BJR,FL}. But in order to relate
$\Ra$ and $S^{\al(\cdot)}$, hence $\Ra$ and $B_{H(\cdot)}$, one should know that
the entropy numbers of $V^{\al(\cdot)}$ tend to zero faster than those of $\Ra$.
But at the moment we do not know whether this is true. At least, the methods
used in the classical case do no longer work, some completely new approach
is necessary.
\item
The methods developed in Sections \ref{Eec} and \ref{Eb} lead also to suitable
upper estimates for the approximation numbers $a_n(\Ra)$ at least if
$1\le q\le p\le \infty$. It would be interesting to find such estimates as
well for the remaining cases of $p$ and $q$. Note that in contrast to
$e_n(R^\al)$ the behavior $a_n(R^\al)$ depends heavily on the choice of $p$ and
$q$ (cf.~\cite{ET}).
\eee
\bigskip


\vskip 1pc
\noindent

\parbox[t]{8cm}
{Mikhail Lifshits, \\
Department of Mathematics and Mechanics,\\
St.~Petersburg State University,\\
198504 St.~Petersburg, Russia,\\
email: mikhail@lifshits.org}\hfill
\parbox[t]{6cm}
{Werner Linde\\
Friedrich--Schiller--Universit\"at Jena \\
Institut f\"ur Stochastik\\
Ernst--Abbe--Platz 2\\
07743 Jena,
Germany\\
email: werner.linde@uni-jena.de}

\end{document}